\numberwithin{equation}{section}
\numberwithin{figure}{section}
 \theoremstyle{definition}
 \newtheorem*{defn*}{\protect\definitionname}
\theoremstyle{plain}
\newtheorem{thm}{\protect\theoremname}
  \theoremstyle{plain}
  \newtheorem{cor}[thm]{\protect\corollaryname}
  \theoremstyle{definition}
  \newtheorem{defn}[thm]{\protect\definitionname}
  \theoremstyle{plain}
  \newtheorem{lem}[thm]{\protect\lemmaname}
  \providecommand{\corollaryname}{Corollary}
  \providecommand{\definitionname}{Definition}
  \providecommand{\lemmaname}{Lemma}
\providecommand{\theoremname}{Theorem}
\begin{document}
\begin{singlespace}

\title{LARGE DEVIATIONS FOR GENERALIZED POLYA URNS WITH ARBITRARY URN FUNCTION}
\end{singlespace}

\author{{\footnotesize{}SIMONE FRANCHINI} }

\address{Simone Franchini, Department of Mathematics and Physics, Universita
degli studi Roma Tre, Via della Vasca Navale 84, 00146 Roma}
\begin{abstract}
{\footnotesize{}We consider a generalized two-color Polya urn (black
and withe balls) first introduced by Hill, Lane, Sudderth \cite{Hill Lane Sudderth},
where the urn composition evolves as follows: let $\pi:\left[0,1\right]\rightarrow\left[0,1\right]$,
and denote by $x_{n}$ the fraction of black balls after step $n$,
then at step $n+1$ a black ball is added with probability $\pi\left(x_{n}\right)$
and a white ball is added with probability $1-\pi\left(x_{n}\right)$.
Originally introduced to mimic attachment under imperfect information,
this model has found applications in many fields, ranging from Market
Share modeling to polymer physics and biology.}{\footnotesize \par}

{\footnotesize{}In this work we discuss large deviations for a wide
class of continuous urn functions $\pi$. In particular, we prove
that this process satisfies a Sample-Path Large Deviations principle,
also providing a variational representation for the rate function.
Then, we derive a variational representation for the limit 
\[
\phi\left(s\right)=\lim_{n\rightarrow\infty}{\textstyle \frac{1}{n}}\log\mathbb{P}\left(nx_{n}=\left\lfloor sn\right\rfloor \right),\, s\in\left[0,1\right],
\]
where $nx_{n}$ is the number of black balls at time $n$, and use
it to give some insight on the shape of $\phi\left(s\right)$. Under
suitable assumptions on $\pi$ we are able to identify the optimal
trajectory. We also find a non-linear Cauchy problem for the Cumulant
Generating Function and provide an explicit analysis for some selected
examples. In particular we discuss the linear case, which embeds the
Bagchi-Pal Model \cite{Bagchi-Pal}, giving the exact implicit expression
for $\phi$ in terms of the Cumulant Generating Function.}{\footnotesize \par}
\end{abstract}
\maketitle
\tableofcontents{}

\section{Introduction.}

\label{Section1}Urns\textbf{}%
\footnote{\textit{Key words and phrases:} large deviations, urn models, Markov
chains %
}\textbf{}%
\footnote{\textit{AMS 2010 subject classifications:} primary 60J10, secondary
60J80%
} are simple probabilistic models that had a broad theoretical development
and applications for several decades, gaining a prominent position
within the framework of adaptive stochastic processes. In general,
single-urn schemes are Markov chains that start with a set (urn) containing
two or more elements of different types: at each step a number of
elements is added or removed with some probabilities depending on
the composition of the urn. Since their introduction these models
where intended to describe phenomena where an underlying tree growth
is present \cite{Pemantle,Mahmoud,Johnson_Koz,MahmoudBook}.

Given the general definition above, an impressive number of variants
have been introduced, depending on the number of colors, extraction
and replacement rules, etc. This work focuses on Large Deviations
Principles (LDP) for a generalization of the classical Polya-Eggenberger
two-colors urn scheme, first introduced by Hill, Lane and Sudderth
\cite{Hill Lane Sudderth,Hill Lane Sudderth 2}. Let us consider an
infinite capacity urn which contains two kinds of elements, say black
and white balls, and denote by $X_{n}:=\left\{ X_{n,k}:1\leq k\leq n\right\} $
the number of black balls during the urn evolution from time $0$
to $n$: at time $k$ there are $k$ balls in the urn, $X_{n,k}$
of which are black. Given a map $\pi:\left[0,1\right]\rightarrow\left[0,1\right]$
(usually referred to as urn function) the urn evolves as follows:
let $x_{n,k}:=k^{-1}X_{n,k}$, $1\leq k\leq n$ be the fraction of
black balls in the urn at step $k$, then a new ball is added at step
$k+1$, whose color is black with probability $\pi\left(x_{n,k}\right)$
and white with probability $1-\pi\left(x_{n,k}\right)=\bar{\pi}\left(x_{n,k}\right)$
(hereafter we denote the complementary probability by an upper bar),
\begin{equation}
X_{n,k+1}=\left\{ \begin{array}{l}
X_{n,k}+1\\
X_{n,k}
\end{array}\begin{array}{l}
with\ probability\\
with\ probability
\end{array}\begin{array}{l}
\pi\left(x_{n,k}\right),\\
\bar{\pi}\left(x_{n,k}\right).
\end{array}\right.
\end{equation}
Apart form the wide range of behaviors depending on the choice of
the urn function, which makes this generalized urn scheme challenging
and rich by itself, attention arises from its relevance to branching
phenomena, stochastic approximation and reinforced random walks \cite{Hill Lane Sudderth,Hill Lane Sudderth 2,Gouet,Kotz Balakrishnan,Mahmoud,Pemantle},
as well as in in Market Share modeling \cite{Dosi Ermoliev Kaniovski,Arthur,Arthur Dosi Ermoliev,DosieErmoliev2-1,Ermoliev-Arthur2,Ermoliev-Arthur3}
and other fields \cite{Khanin,Cotar Limic,Olivera,Dinea Frizer Mitzenmacher}.
We remark it has also been generalized to multicolor urns, whose strong
convergence properties have been investigated by Arthur et Al. in
a series of papers \cite{DosieErmoliev2-1,Ermoliev-Arthur2,Ermoliev-Arthur3},
but in the present work we restrict our attention to the two-colors
case. 

The paper is organized as follows: in this introductory section we
briefly review the main known results about the Generalized Polya
(GP) urn of Hill, Lane and Sudderth, discussing the classes of urn
functions we will consider and introducing some notation. Our results
on large deviations are in Section \ref{Section2}: in particular,
we will present our theorems concerning the Sample Path Large Deviations
Principles, a large deviations analysis for the event $\left\{ X_{n,n}=\left\lfloor sn\right\rfloor \right\} $,
$s\in\left[0,1\right]$ and the Cumulant Generating Function (CGF),
also discussing some applications to paradigmatic examples from literature.
All proofs have been collected in a dedicated section (Section \ref{Section3})
which contains almost all the technical features of this work.

\subsection{The urn function $\pi$.}

\label{Section1.11}In the following we formally present the GP urns
of Hill, Lane and Shuddery, and introduce some non-standard notation
which will be useful when dealing with LDPs: we tried to reduce new
notation to minimum, keeping the common urn terminology everywhere
this was possible. 

As we shall see, the initial conditions do not affect the LDPs for
the class of urn functions we will consider, unless the urn has some
intervals of $s$ for which $\pi\left(s\right)=1$ or $0$. Then,
if not specified otherwise, in this work we set $X_{n,1}$ to be a
random variable uniformly distributed on $\left[0,1\right]$ by convention,
ie 
\begin{equation}
\mathbb{P}\left(X_{n,1}\in\left[s_{1},s_{2}\right]\right):=\left|s_{2}-s_{1}\right|,\,\forall\left[s_{1},s_{2}\right]\subset\left[0,1\right].\label{eq:uniformstart}
\end{equation}
We remark that in the above definition $X_{n,1}$ does not represent
the number of black balls at the initial stage of the urn evolution,
it is just a convenient initial condition for the Eq. (\ref{eq:1.2})
below. We will further elaborate the effect of realistic initial conditions
on the LDPs in Section \ref{Section2}, after the statement of Corollary
2. That said, our process $X_{n}:=\left\{ X_{n,k}:1\leq k\leq n\right\} $
is the Markov Chain with transition matrix:
\begin{equation}
\mathbb{P}\left(X_{n,k+1}=X_{n,k}+i|\, X_{n,k}=j\right):=\pi\left(j/k\right)\mathbb{I}_{\left\{ i=1\right\} }+\bar{\pi}\left(j/k\right)\mathbb{I}_{\left\{ i=0\right\} }.\label{eq:1.2}
\end{equation}
We denote by $\delta X_{n}$ the associated sequence $\delta X_{n,k}:=X_{n,k+1}-X_{n,k}\in\left\{ 0,1\right\} $
for $0\leq k\leq n-1$. For notational convenience, the dependence
on $\pi$ is not specified. Throughout this work we will consider
a sub-class \textit{$\mathcal{U}$} of continuous functions $\pi$:
$\left[0,1\right]\rightarrow\left[0,1\right]$ defined as follows:
\begin{defn*}
\textit{We say that $\pi$: $\left[0,1\right]\rightarrow\left[0,1\right]$
continuous belongs to $\mathcal{U}$ if some function $f>0$ with
\begin{equation}
\lim_{\epsilon\rightarrow0}\epsilon\int_{\epsilon}^{1}{\textstyle dz}\, f\left(z\right)/z^{2}=0
\end{equation}
exists such that $\left|\pi\left(x+\delta\right)-\pi\left(x\right)\right|\leq f\left(\left|\delta\right|\right)$
for $\delta\rightarrow0$, $x\in\left[0,1\right]$. For example, in
the Polya-Eggenberger urn we can take  $f\left(z\right)=z$ and the
above condition becomes $\lim_{\epsilon\rightarrow0}\epsilon\log\left(\epsilon\right)=0$.}
\end{defn*}
Even if this class of functions is slightly smaller than those considered
in \cite{Hill Lane Sudderth,Pemantle,Mahmoud,Pemantle  2}, where
most results are obtained for continuous functions, it still includes
all Lipschitz and $\alpha-$H�lder functions. This class has been
constructed to include most of the interesting cases that can be described
by urn functions while keeping properties that allow a reasonably
straight application of the Varadhan lemma. We will discuss this in
Section \ref{Section3}.

In the following we introduce some new notation which is intended
to ease the description of our results, as well as the limit properties
of $X_{n}$. Define the following sets: 
\begin{equation}
C_{\pi}:=\left\{ s\in\left(0,1\right):\,\pi\left(s\right)=s\right\} ,\,\partial C_{\pi}:=C_{\pi}\setminus\mathrm{int}\left(C_{\pi}\right),\label{eq:1.4}
\end{equation}
where $\mathrm{int\left(C_{\pi}\right)}$ is the interior of $C_{\pi}$.
We will refer to the elements of $C_{\pi}$ as \textit{contacts}.
Note that for the considered urn functions $C_{\pi}$ may not be a
set of isolated points, since our definition of $\mathcal{U}$ allows
$\pi\left(s\right)=s$ for some interval $s\in\left[s_{1},s_{2}\right]$
(see the region $K_{\pi,3}$ in Figure 1.1). On the contrary $\partial C_{\pi}$
is always a finite set of isolated points since it collects the boundaries
of the regions in which $\pi\left(s\right)-s$ has a definite sign.
We denote by $N:=\left|\partial C_{\pi}\right|$ the number of such
points in $\partial C_{\pi}$ for a given $\pi$.

We can further distinguish the elements of $\partial C_{\pi}$ by
considering the behavior of $\pi\left(s\right)$ in their neighborhood:
to do so, we will introduce a partition of the interval $\left[0,1\right]$.
We remark that the notation we are going to define is not a standard
of urn literature, but it will prove useful in describing of our results
when dealing with optimal trajectories. First, let us organize the
elements of $\partial C_{\pi}$ by increasing order, labeling them
as
\begin{equation}
\partial C_{\pi}=:\left\{ s_{i},1\leq i\leq N:\, s_{i}<s_{i+1}\right\} .\label{eq:1.4.11}
\end{equation}
Then, we can define the following sequence of intervals (see Figure
\ref{fig:1})
\begin{equation}
K_{\pi}:=\left\{ K_{\pi,i},\,0\leq i\leq N:\, K_{\pi,0}:=\left(0,s_{1}\right),K_{\pi,N}:=\left(s_{N},1\right),\, K_{\pi,j}:=\left(s_{j},s_{j+1}\right)\right\} .\label{eq:1.4.1}
\end{equation}
By definition of $\partial C_{\pi}$$,$ the above intervals are such
that $\pi\left(s\right)-s$ does not change sign for $s\in K_{\pi,i}$.
Then we can associate a variable $a_{\pi,i}\in\left\{ -1,0,1\right\} $
to each interval $K_{\pi,i}$ which expresses the sign of $\pi\left(s\right)-s$.
We denote such sequence by 
\begin{equation}
A_{\pi}:=\{a_{\pi,i},\,0\le i\leq N:\, a_{\pi,i}={\textstyle \frac{\pi\left(s\right)-s}{\left|\pi\left(s\right)-s\right|}\mathbb{I}_{\left\{ \pi\left(s\right)\neq s\right\} }},\, s\in K_{\pi,i}\}.\label{eq:1.4.2}
\end{equation}
Some words should be spent on the correct use of this notation when
the urn function has $\pi\left(0\right)=0$ or $\pi\left(1\right)=1$,
or both. Consider the first case: if $\pi\left(0\right)=0$ then the
smallest element of $\partial C_{\pi}$ is $s_{1}=0$. Following our
definition of $K_{\pi,0}$ as open interval we would have that $K_{\pi,0}=\textrm{�}$
and $a_{\pi,0}$ not well defined. To patch this, we set by convention
that $a_{\pi,0}=1$ if $K_{\pi,0}=\textrm{�}$ and $a_{\pi,N}=-1$
if $K_{\pi,N}=\textrm{�}$.

\begin{figure}[h]
\begin{singlespace}
\centering{}~~~\\
 ~\includegraphics[scale=3]{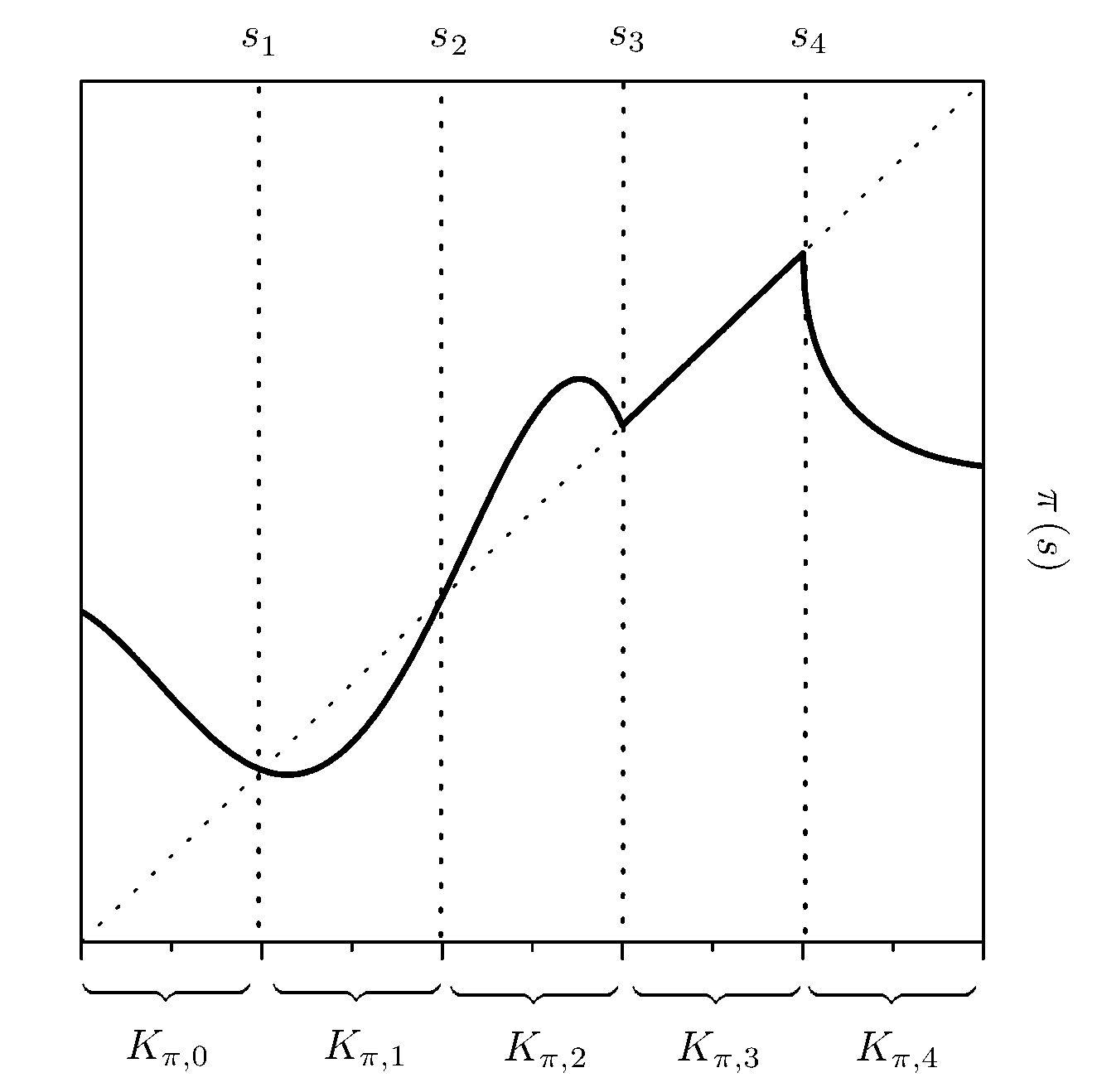}\caption{\textit{\label{fig:1}Example of urn function $\pi\in\mathcal{U}$
to illustrate the notation introduced in Eq.s (\ref{eq:1.4.1}), (\ref{eq:1.4.11}).
For the function above we have $C_{\pi}=\left\{ 1/5,\,2/5,\,3/5,\,4/5\right\} \cup\left(3/5,\,4/5\right)$,
then $\partial C_{\pi}=\left\{ 1/5,\,2/5,\,3/5,\,4/5\right\} $, $K_{\pi,0}=\left[0,1/5\right)$,
$K_{\pi,4}=\left(4/5,\,1\right]$, $K_{\pi,i}:=\left(i/5,\,\left(i+1\right)/5\right)$,
$i\in\left\{ 1,2,3\right\} $ and $A_{\pi}=\left\{ 1,\,-1,\,1,\,0,\,-1\right\} $.
Also, $s_{1}=1/5\in C_{\pi}\left(+,-\right)$ is a downcrossing, $s_{2}=2/5\in C_{\pi}\left(-,+\right)$
is an upcrossing, $s\in\left(3/5,4/5\right)$ is a dense region of
Polya-like contacts $C_{\pi}\left(0,0\right)$ while $s_{3}=3/5$,
$s_{4}=4/5$ are its left and right boundaries $C_{\pi}\left(+,0\right)$,
$C_{\pi}\left(0,-\right)$ respectively. }}
\end{singlespace}
\end{figure}

Using the above notation we can now define the subsets $C_{\pi}\left(\alpha,\beta\right)$
of those $s\in\partial C_{\pi}$ such that $\alpha\in\left\{ +,0,-\right\} $
is the sign of $\pi{\textstyle \left(s'\right)-s'}$ for $s'-s\rightarrow0^{-}$
and $\beta\in\left\{ +,0,-\right\} $ is the sign of $\pi{\textstyle \left(s'\right)-s'}$
for $s'-s\rightarrow0^{+}$. 
\begin{equation}
C_{\pi}\left(\alpha,\beta\right):=\left\{ s_{i}\in\partial C_{\pi}:\,\mathrm{sign}\left(a_{\pi,i-1}\right)=\alpha,\,\mathrm{sign}\left(a_{\pi,i}\right)=\beta\right\} 
\end{equation}
References \cite{Hill Lane Sudderth,Pemantle,Mahmoud,Pemantle  2}
call $C_{\pi}\left(+,-\right)$ and $C_{\pi}\left(-,+\right)$ respectively
\textit{downcrossings} and \textit{upcrossings}, while $C_{\pi}\left(+,+\right)$
and $C_{\pi}\left(-,-\right)$ are \textit{touchpoints.} Note that
our classification also allows contacts of the kind $C_{\pi}\left(\alpha,0\right)$
and $C_{\pi}\left(0,\beta\right)$, which are the boundaries of those
intervals $K_{\pi,i}$ for which $\pi\left(s\right)=s$ ($a_{\pi,i}=0$).

\subsection{Strong convergence.}

\label{Section1.2}Here we review some of the main known results on
\textit{strong convergence}, ie, on the almost sure convergence of
$x_{n,n}$. This topic has been widely investigated in \cite{Hill Lane Sudderth,Hill Lane Sudderth 2,Gouet,Pemantle  2,Mahmoud,Pemantle}).
As example, consider the simplest non trivial urn model, the so called
Polya-Eggenberger urn \cite{Polya Eggenberger}, which evolves as
follows: at each step draw a ball, if it is black then add a black
ball, and add a white one otherwise. This urn is represented in our
context by the urn function $\pi\left(s\right)=s$. In this case $\mathbb{E}\left(x_{n,k+1}|x_{n,k}\right)=x_{n,k}$,
so that $x_{n,k}$ is a martingale and $\lim_{n}x_{n,n}$ exists almost
surely. 

The existence of $\lim_{n}x_{n,n}$ has been shown in \cite{Hill Lane Sudderth}
for a wider class of urn functions (including some non-continuous
$\pi$). In \cite{Hill Lane Sudderth} it has been shown that if $\pi$
is a continuous function then $\lim_{n}x_{n,n}$ exists almost surely,
and $\lim_{n}x_{n,n}\in C_{\pi}$. The same result holds if $\pi$
is non-continuous, provided the points $s$ where $\pi\left(s\right)-s$
oscillates in sign are not dense in an interval. 

Clearly, not all the points of $C_{\pi}$ can be the limit of $x_{n,n}$
and several efforts were made to determine whether a point belongs
to the support of $\lim_{n}x_{n,n}$ for a given $\pi$ \cite{Hill Lane Sudderth,Pemantle  2}.
We say that $s\in\left[0,1\right]$ belongs to the support of $\lim_{n}x_{n,n}$
if $\mathbb{P}\left(\left|\lim_{n}x_{n,n}-s\right|<\delta\right)>0$,
$\forall\delta>0$. In general, we can summarize from \cite{Hill Lane Sudderth,Pemantle  2}
what is known about the support of $\lim_{n}x_{n,n}$ in our setting
($\pi\in\mathcal{U}$ and $X_{n,1}$ uniform on $\left[0,1\right]$).
Let $X_{n}$ be the urn process generated by the urn function $\pi\in\mathcal{U}$,
and define $\Delta_{\pi,\epsilon}\left(s\right):=\epsilon^{-1}\left[\pi\left(s+\epsilon\right)-\pi\left(s\right)\right]$.
Then the limit $\lim_{n}x_{n,n}$ exists almost surely and
\begin{enumerate}
\item Downcrossings $C_{\pi}\left(+,-\right)$ always belong to the support
of $\lim_{n}x_{n,n}$ while upcrossings $C_{\pi}\left(-,+\right)$
never do. 
\item If $s\in C_{\pi}\left(+,+\right)$, then it belongs to the support
of $\lim_{n}x_{n,n}$ if and only if some $\delta>0$ exists such
that $\Delta_{\pi,\epsilon}\left(s\right)\in\left(1/2,1\right)$ for
$\epsilon\in\left(-\delta,0\right)$.
\item If $s\in C_{\pi}\left(-,-\right)$, then it belongs to the support
of $\lim_{n}x_{n,n}$ if and only if some $\delta>0$ exists such
that $\Delta_{\pi,\epsilon}\left(s\right)\in\left(1/2,1\right)$ for
$\epsilon\in\left(0,\delta\right)$.
\end{enumerate}
The proof that downcrossings belong to the support of $\lim_{n}x_{n,n}$
while upcrossings don't can be found in reference \cite{Hill Lane Sudderth}:
it involves Markov chain coupling together with martingale analysis.
The statement that touchpoints $C_{\pi}\left(+,+\right)$ with $1/2<\Delta_{\pi,\epsilon}\left(s\right)<1$
from the left ($\epsilon<0$) and $C_{\pi}\left(-,-\right)$ with
$1/2<\Delta_{\pi,\epsilon}\left(s\right)<1$ from the right ($\epsilon>0$)
belong to the support of $\lim_{n}x_{n,n}$ has been proved in \cite{Pemantle  2}
by Pemantle. This seemingly paradoxical statement is actually a deep
observation about the dynamics of the process: if the condition on
$\Delta_{\pi,\epsilon}\left(s\right)$ is fulfilled, then $x_{n,n}$
converges so slowly to $s\in C_{\pi}\left(+,+\right)$ from the left
(to $s\in C_{\pi}\left(-,-\right)$ from the right) that it almost
surely never crosses this point, accumulating in its left (right)
neighborhood. If not, then $x_{n,n}$ crosses $s$ in finite time
almost surely, and gets pushed away from the other side toward the
closest stable equilibrium (ie, the closest point that belongs to
the support of $\lim_{n}x_{n,n}$ ).

Even if we left out the cases $C_{\pi}\left(\alpha,0\right)$, $C_{\pi}\left(0,\beta\right)$
and $s\in K_{\pi,i}$ with $a_{\pi,i}=0$ from the above statement
it is clear that they always belong to the support of $\lim_{n}x_{n,n}$
since in some neighborhood of these points the process behaves like
a Polya-Eggenberger urn.

We remark that almost sure convergence is strongly affected by initial
conditions: since a detailed discussion of this topic would be far
from the scope of this work, we defer to the reviews \cite{Hill Lane Sudderth,Pemantle,Mahmoud,Pemantle  2}.

\section{Main results.}

\label{Section2}While the almost sure convergence properties of such
urns are quite well understood also in multicolor generalizations
(see \cite{DosieErmoliev2-1,Ermoliev-Arthur2,Ermoliev-Arthur3}),
Large Deviations properties are not. Apart from the Polya-Eggenberger
urn, for which we can explicitly compute the exact urn composition
at each time, to the best of our knowledge large deviations results
in urn models have been pioneered by Flajolet et Al. \cite{Fajolet-Analytic Urns,Fajolet2,Panhozer},
which provided a detailed analysis of the Bagchi-Pal urn using generating
function methods. Since then other authors extended this approach
to many related models (of particular interest is \cite{Stochastic urns},
a Bagchi-Pal urn with stochastic reinforcement matrix). Another early
work on Large Deviations has been provided by Bryc et Al. in \cite{Bryc},
where a special Bagchi-Pal type urn is studied as model for preferential
attachment and an explicit expression of the Cumulant Generating Function
is obtained in integral form (see the end of this section for an introduction
to the Bagchi-Pal model).

This section mostly contains the statements of our results. Most of
the proofs of the following statements are grouped in Section \ref{Section3}:
we will specify where to find them.

\subsection{Sample-Path Large Deviation Principle.}

\label{Section2.1}As preliminary result, we need a Sample-Path Large
Deviation principle which holds for any $\pi\in\mathcal{U}$. Then,
define the function $\chi_{n}:\left[0,1\right]\rightarrow\left[0,1\right]$
as follows: 
\begin{equation}
\chi_{n}:=\left\{ \chi_{n,\tau}=n^{-1}\left[X_{n,\left\lfloor n\tau\right\rfloor }+\left(n\tau-\left\lfloor n\tau\right\rfloor \right)\delta X_{n,\left\lfloor n\tau\right\rfloor }\right]:\tau\in\left[0,1\right]\right\} ,\label{eq:2.1-2}
\end{equation}
where $\left\lfloor \cdot\right\rfloor $ denotes the lower integer
part, and introduce the subspace of Lipschitz-continuous functions
\begin{equation}
\mathcal{Q}:=\left\{ \varphi\in C\left(\left[0,1\right]\right):\,\varphi_{0}=0,\,\varphi_{\tau+\delta}-\varphi_{\tau}\in\left[0,\delta\right],\,\delta>0,\,\tau\in\left[0,1\right]\right\} ,\label{eq:Q-def}
\end{equation}
where $C\left(\left[0,1\right]\right)$ is the set of continuous functions
on $\left[0,1\right]$. Denote by $\left\Vert \varphi\right\Vert :=\sup_{\tau\in\left[0,1\right]}\left|\varphi_{\tau}\right|$
the usual supremum norm, and consider the normed metric space $\left(\mathcal{Q},\,\left\Vert \cdot\right\Vert \right)$.
We show that a good rate function $I_{\pi}:\mathcal{Q}\rightarrow\left[0,\infty\right)$
exists such that for every Borel subset $\mathcal{B}\subseteq\mathcal{Q}$:
\begin{equation}
\liminf_{n\rightarrow\infty}\, n^{-1}\log\mathbb{P}\left(\chi_{n}\in\mathrm{int}\left(\mathcal{B}\right)\right)\geq-\inf_{\varphi\in\mathrm{int}\left(\mathcal{B}\right)}I_{\pi}\left[\varphi\right],\label{eq:1.8}
\end{equation}
\begin{equation}
\limsup_{n\rightarrow\infty}\, n^{-1}\log\mathbb{P}\left(\chi_{n}\in\ \mathrm{cl}\left(\mathcal{B}\right)\right)\leq-\ \,\inf_{\varphi\in\mathrm{cl}\left(\mathcal{B}\right)}I_{\pi}\left[\varphi\right].\label{eq:1.9}
\end{equation}
To describe the rate function we introduce a functional $S_{\pi}:\mathcal{Q}\rightarrow\left(-\infty,0\right]$,
defined as follows: 
\begin{equation}
S_{\pi}\left[\varphi\right]:=\int_{\tau\in\left[0,1\right]}\left[\, d\varphi_{\tau}\,\log\pi\left(\varphi_{\tau}/\tau\right)+d\tilde{\varphi}_{\tau}\,\log\bar{\pi}\left(\varphi_{\tau}/\tau\right)\right],\label{eq:1.10}
\end{equation}
where we denoted $\bar{\pi}\left(s\right)=1-\pi\left(s\right)$ and
$\tilde{\varphi}_{\tau}=\tau-\varphi_{\tau}$. Then, the following
theorem gives the Sample-Path LDP for $\chi_{n}$:
\begin{thm}
\label{Theorem 1}Let $\pi\in\mathcal{U}$, $\varphi\in\mathcal{Q}$,
define the function $H\left(s\right):=s\log s+\bar{s}\log\bar{s}$,
and the functional $J:\mathcal{Q}\rightarrow\left[-\log2,\infty\right)$
as follows: 
\begin{equation}
J\left[\varphi\right]=\left\{ \begin{array}{l}
\int_{0}^{1}d\tau\, H\left(\dot{\varphi}_{\tau}\right)\\
\infty
\end{array}\ \begin{array}{l}
if\ \varphi\in\mathcal{AC}\\
otherwise,
\end{array}\right.
\end{equation}
where $\mathcal{AC}$ is the class of absolutely continuous functions
(we assume the same definition given in Theorem 5.1.2 of \cite{Dembo Zeitouni})
and $\dot{\varphi}_{\tau}:=\frac{d\varphi_{\tau}}{d\tau}$. Also,
define the good rate function
\begin{equation}
I_{\pi}\left[\varphi\right]=J\left[\varphi\right]-S_{\pi}\left[\varphi\right],
\end{equation}
with $S_{\pi}$ as in Eq. (\ref{eq:1.10}). Then, the law of $\chi_{n}$
with initial condition $X_{n,1}$ of Eq. (\ref{eq:uniformstart})
uniformly distributed on the interval $\left[0,1\right]$ satisfies
a Sample-Path LDP as in Eq.s (\ref{eq:1.8}) and (\ref{eq:1.9}),
with good rate function $I_{\pi}\left[\varphi\right]$. 
\end{thm}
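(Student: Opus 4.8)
The plan is to obtain the Sample-Path LDP by a change-of-measure (tilting) argument against the Polya-Eggenberger reference urn $\pi \equiv 1/2$, for which the increments $\delta X_{n,k}$ are i.i.d.\ fair coin flips and $\chi_n$ is (up to the linear interpolation) a rescaled simple random walk. Mogulskii's theorem (Theorem 5.1.2 in \cite{Dembo Zeitouni}) gives that under this reference law the polygonal path $\chi_n$ satisfies a Sample-Path LDP on $(\mathcal{Q},\|\cdot\|)$ with good rate function
\begin{equation}
J_{1/2}[\varphi] = \int_0^1 d\tau\,\bigl[H(\dot\varphi_\tau) + \log 2\bigr] = J[\varphi] + \log 2
\end{equation}
on $\mathcal{AC}$ and $+\infty$ otherwise; note the constraint $\dot\varphi_\tau\in[0,1]$ is built into $\mathcal{Q}$ and is exactly the effective domain of $H$. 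The first step is to make this reduction precise, including checking that the uniform initial condition $X_{n,1}\sim\mathrm{Unif}[0,1]$ contributes nothing to the rate (it costs $O(\log n)$, hence is LDP-negligible) and that the polygonal interpolation in \eqref{eq:2.1-2} does not change the LDP relative to the piecewise-constant version.

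The second step is the Radon-Nikodym computation. Along a trajectory $\delta X_n$, the likelihood ratio of the $\pi$-urn to the $\tfrac12$-urn is
\begin{equation}
\frac{d\mathbb{P}^{\pi}_n}{d\mathbb{P}^{1/2}_n}(\delta X_n) = 2^n \prod_{k=1}^{n-1} \pi(x_{n,k})^{\delta X_{n,k}}\,\bar\pi(x_{n,k})^{1-\delta X_{n,k}}.
\end{equation}
Taking $n^{-1}\log$ of this, one recognizes a Riemann-sum approximation to the functional $S_\pi[\varphi]$ of \eqref{eq:1.10}: writing $\tau = k/n$, $d\varphi_\tau \approx \delta X_{n,k}/n$, $x_{n,k} = X_{n,k}/k \approx \varphi_\tau/\tau$, the sum $n^{-1}\sum_k [\delta X_{n,k}\log\pi(x_{n,k}) + (1-\delta X_{n,k})\log\bar\pi(x_{n,k})]$ converges to $S_\pi[\varphi]$. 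The key analytic input here is precisely the defining condition of the class $\mathcal{U}$: because $|\pi(x+\delta)-\pi(x)|\le f(|\delta|)$ with $\epsilon\int_\epsilon^1 f(z)/z^2\,dz\to 0$, the error in replacing $\pi(x_{n,k})$ by $\pi(\varphi_{k/n}/(k/n))$ — which is the discretization error of a function of the running average — can be controlled uniformly in small-$k$ regions where $x_{n,k}$ fluctuates most. This continuity/modulus estimate is what makes $\varphi\mapsto S_\pi[\varphi]$ continuous on $(\mathcal{Q},\|\cdot\|)$, which is the hypothesis one needs to apply a Varadhan-type transfer. I expect this uniformity estimate near $\tau = 0$ to be the main obstacle: the running average $x_{n,k}$ has fluctuations of order $k^{-1/2}$ that are not small compared to $k/n$ when $k$ is a small power of $n$, so one must show the contribution of this initial window to both $J$ and $S_\pi$ is negligible, using exactly the weighted integrability condition in the definition of $\mathcal{U}$.

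The third step assembles the pieces. Having written $\mathbb{P}^\pi_n(\chi_n\in\mathcal{B}) = \mathbb{E}^{1/2}_n[\,e^{n\,(n^{-1}\log\frac{d\mathbb{P}^\pi}{d\mathbb{P}^{1/2}})}\mathbb{I}_{\{\chi_n\in\mathcal{B}\}}]$ and identified $n^{-1}\log\frac{d\mathbb{P}^\pi_n}{d\mathbb{P}^{1/2}_n} \to S_\pi[\chi_n]$ (in the appropriate uniform sense over $\mathcal{Q}$), one applies Varadhan's integral lemma / the standard tilting argument (as in Theorem 4.3.1 and Lemma 4.1.5ff.\ of \cite{Dembo Zeitouni}): for open sets the lower bound \eqref{eq:1.8} follows from the reference LDP lower bound plus Fatou applied to the (continuous, bounded-below) functional $S_\pi$, and for closed sets the upper bound \eqref{eq:1.9} follows from the reference LDP upper bound together with an exponential-tightness argument and an upper semicontinuity check on $S_\pi$. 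The conclusion is the claimed LDP with rate $I_\pi[\varphi] = J_{1/2}[\varphi] - \log 2 - S_\pi[\varphi] = J[\varphi] - S_\pi[\varphi]$, and goodness of $I_\pi$ follows from goodness of $J$ (compact level sets in $\mathcal{Q}$ by Arzel\`a--Ascoli, since $\dot\varphi\in[0,1]$) together with continuity and boundedness of $S_\pi$ on $\mathcal{Q}$. One should also verify $S_\pi \le 0$, which is immediate from $\pi,\bar\pi\le 1$ and $d\varphi_\tau,d\tilde\varphi_\tau\ge 0$, so that $I_\pi \ge J \ge -\log 2$, consistent with the stated codomains; a final remark addresses the edge cases where $\pi(s)\in\{0,1\}$ on an interval (so $\log\pi$ or $\log\bar\pi = -\infty$), where $I_\pi$ can be $+\infty$ on paths forced into the forbidden region, which is why the theorem's caveat about such intervals and initial conditions appears.
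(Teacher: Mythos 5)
Your plan for the core case coincides with the paper's: tilt against the $\pi\equiv 1/2$ binomial urn, invoke Mogulskii for the reference path measure, identify $n^{-1}\log$ of the Radon--Nikodym derivative with $S_{\pi}$ up to a discretization error controlled by the defining condition of $\mathcal{U}$ (this is exactly the role of Lemmas \ref{Lemma 5-1} and \ref{Lemma 5}), and transfer via the Varadhan-type Lemmas 4.3.2--4.3.3 of \cite{Dembo Zeitouni}. One small correction of emphasis: since the likelihood ratio is a deterministic functional of the path, the error near $\tau=0$ is not a matter of stochastic fluctuations of $x_{n,k}$ of order $k^{-1/2}$; it is the deterministic bound $|\epsilon_{n,\tau}|\le\min\{\tau,1/n\}$ combined with $d\varphi_{\tau}\le d\tau$ and the weighted integrability of $f$, which yields a modulus $W_{\pi}$ uniform over $\mathcal{Q}_{n}$.

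The genuine gap is the case where $\pi$ attains the values $0$ or $1$, which you dismiss in a closing remark but which is part of the theorem as stated ($\pi\in\mathcal{U}$ maps into $[0,1]$, and the uniform initial condition does not exclude such $\pi$; the caveat in the paper about initial conditions concerns Corollary \ref{cor:CorrollariINIT}, not Theorem \ref{Theorem 1}). When $\pi$ is $0$ or $1$ on an interval, $\left\Vert \log\pi\right\Vert$ or $\left\Vert \log\bar{\pi}\right\Vert$ is infinite, so the continuity of $S_{\pi}$ on $\left(\mathcal{Q},\left\Vert \cdot\right\Vert \right)$ fails, $S_{\pi}$ takes the value $-\infty$, and the semicontinuity hypotheses needed for the Varadhan transfer are no longer available; your step three simply does not go through. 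The paper closes this with two separate constructions you would need: (i) a ``surgery'' over $\mathcal{Q}$, decomposing the event $\left\{ \chi_{n}\in\mathcal{B}\right\}$ over finitely many classes $\mathcal{Q}_{k}$ of trajectories (built from the intervals $G_{\pi;i}$, $\bar{G}_{\pi;i}^{\,\alpha}$) on which $S_{\pi}$ can be finite, all other trajectories being discarded a priori; and (ii) a regularization $\pi_{\epsilon}^{-}\le\pi\le\pi_{\epsilon}^{+}$ near the edges $\sigma_{i}^{\pm}$, for which the $\pi\in\left(0,1\right)$ argument applies on each $\mathcal{Q}_{k}$, followed by a sandwich argument (comparing optimal trajectories $\varphi^{+}$, $\varphi^{-}$, $\varphi^{*}$ and showing $\Gamma_{\epsilon},\Delta_{\epsilon},\Delta'_{\epsilon}\rightarrow0$) proving that $\inf I_{\pi_{\epsilon}^{\pm}}\rightarrow\inf I_{\pi}$ as $\epsilon\rightarrow0$. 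Without some substitute for these steps your proof establishes the theorem only under the additional hypothesis $\pi\left(s\right)\in\left(0,1\right)$ for all $s$.
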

The proof is quite standard, and based on a change of measure and
an application of the Varadhan Integral Lemma plus some surgery on
the set $\mathcal{Q}$ to a priori exclude those trajectories which
create issues in proving the continuity of $S_{\pi}\left[\varphi\right]$
on $\left(\mathcal{Q},\left\Vert \cdot\right\Vert \right)$ (see the
approximation argument of Lemma \ref{Lemma 5}). 

Let us now consider a process with some specific initial condition,
say $X_{n,m}=X_{m}^{*}$ for some $0<m\leq n$ and $0\leq X_{m}^{*}\leq m$.
If we call by $\chi_{n}^{*}$ a process defined as in Eq. (\ref{eq:2.1-2})
with the additional condition $\mathbb{P}\left(\chi_{n,m/n}=n^{-1}X_{m}^{*}\right)=1$,
then we can resume the effects of such constraint in the following
corollary
\begin{cor}
\label{cor:CorrollariINIT}Let $\pi\in\mathcal{U}$ and denote by
$\chi_{n}^{*}$ a process defined as in Eq. (\ref{eq:2.1-2}) with
the additional condition that $\chi_{n,m/n}^{*}=n^{-1}X_{m}^{*}$
for some $0<m\leq n$ and $0\leq X_{m}^{*}\leq m$. Define $0\leq z_{-}^{*}<z_{+}^{*}\leq1$
as follows 
\begin{equation}
z_{-}^{*}:=\liminf_{n\rightarrow\infty}\left\{ z_{-}:\,\mathbb{P}\left(X_{n,n}\leq z_{-}n\,|\, X_{n,m}=X_{m}^{*}\right)>0\right\} ,
\end{equation}
\begin{equation}
z_{+}^{*}:=\limsup_{n\rightarrow\infty}\left\{ z_{+}:\,\mathbb{P}\left(X_{n,n}\geq z_{+}n\,|\, X_{n,m}=X_{m}^{*}\right)>0\right\} ,
\end{equation}
and a modified urn function $\pi^{*}$ 
\begin{equation}
\pi^{*}\left(s\right):=\mathbb{I}_{\{s\in\left[0,z_{-}^{*}\right)\}}+\pi\left(s\right)\mathbb{I}_{\{s\in\left[z_{-}^{*},z_{+}^{*}\right]\}}.
\end{equation}
Then, the law of $\chi_{n}^{*}$ with initial condition $x_{n,m}=m^{-1}X_{m}^{*}$
satisfies a Sample-Path LDP with good rate function $I_{\pi^{*}}$,
as for $\chi_{n}$ with $X_{n,1}$ uniform on $\left[0,1\right]$
and $\pi^{*}$ in place of $\pi$.
\end{cor}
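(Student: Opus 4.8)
The plan is to obtain the corollary by applying Theorem \ref{Theorem 1} to the modified urn function $\pi^*$ and then absorbing the conditioning into that modification by a localization argument. First I would reduce to the tail of the process. Since $m$ is held fixed while $n\to\infty$, the transition probabilities (\ref{eq:1.2}) do not depend on $n$, so $\mathbb{P}(X_{n,m}=X_m^*)$ is positive (otherwise the conditional laws in the statement are undefined) and independent of $n$; hence $n^{-1}\log\mathbb{P}(X_{n,m}=X_m^*)\to 0$, and conditioning does not move the exponential normalization. By the Markov property at time $m$, the law of $\chi_n^*$ on $\tau\in[m/n,1]$ is that of a GPU with urn function $\pi$ started from the single configuration $(m,X_m^*)$; and because $m/n\to 0$ and $n^{-1}X_m^*\to 0$, in the rescaled picture this pinned endpoint converges to the origin, which for every $\varphi\in\mathcal{Q}$ is already forced by $\varphi_0=0$. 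Concretely, for a Borel $\mathcal{B}\subseteq\mathcal{Q}$ I would sandwich it between cylinder sets that constrain $\varphi$ only through its restriction to $[\epsilon,1]$, observe that on $[\epsilon,1]$ the extra condition $\chi_{n,m/n}^*=n^{-1}X_m^*$ (as well as the vanishing bridge segment $\tau\in[0,m/n]$) contributes nothing beyond $\varphi_0=0$, run the bounds there, and then let $\epsilon\downarrow 0$ exactly as in the approximation step of Lemma \ref{Lemma 5}.

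The substance is to show that on $[\epsilon,1]$ the pinned tail and the $\pi^*$-GPU from a free initial condition satisfy the same exponential bounds, and this is where $\pi^*$ enters: it encodes reachability. By the definition of $z_-^*,z_+^*$, for every $\delta>0$ and all large $n$ the conditioned process obeys $x_{n,k}\in[z_-^*-\delta,z_+^*+\delta]$ for $k$ in the relevant range, so its trajectories concentrate, as $n\to\infty$, on the set of $\varphi$ with $\varphi_\tau/\tau\in[z_-^*,z_+^*]$, apart from the unit-speed excursions below $z_-^*$ and zero-speed excursions above $z_+^*$ that the monotonicity of $X_{n,k}$ permits near the boundary. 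On exactly this set $\pi^*=\pi$, so the change-of-measure and Varadhan-lemma computation underlying Theorem \ref{Theorem 1} transfers with only the obvious changes and produces the functional $S_{\pi^*}$, which there agrees with $S_\pi$. Outside $[z_-^*,z_+^*]$ the conditioned process has zero probability of venturing, while the $\pi^*$-GPU is driven back deterministically ($\pi^*=1$ below $z_-^*$, $\bar\pi^*=1$ above $z_+^*$); on the functional side this is mirrored by $\log\bar\pi^*=-\infty$ below $z_-^*$ and $\log\pi^*=-\infty$ above $z_+^*$, which forces $S_{\pi^*}[\varphi]=-\infty$, hence $I_{\pi^*}[\varphi]=+\infty$, for any $\varphi$ that leaves $[z_-^*,z_+^*]$ at the wrong speed. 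Thus the finite-rate set of $I_{\pi^*}$ coincides with the asymptotic support of $\chi_n^*$ and the rate values match there; combined with the first paragraph this yields (\ref{eq:1.8})--(\ref{eq:1.9}) for $\chi_n^*$ with rate $I_{\pi^*}$, and goodness of $I_{\pi^*}$ follows as for $I_\pi$ in Theorem \ref{Theorem 1}.

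I expect the reachability analysis to be the main obstacle. Because $z_-^*$ and $z_+^*$ are defined through $\liminf$ and $\limsup$ they need not be attained, and $x_{n,k}=X_{n,k}/k$ can rise and fall even though $X_{n,k}$ is non-decreasing, so the true conditioned process can make transient excursions slightly outside $[z_-^*,z_+^*]$; one must check that such excursions live on time intervals shrinking to $\tau=0$ (or on the unit-/zero-speed regime) so that they are matched exactly by the only excursions a finite-rate $\pi^*$-trajectory may perform, and that no probability mass escapes at exponential scale. This is the delicate point precisely when $\pi$ has flat stretches with $\pi\equiv 0$ or $\pi\equiv 1$ near the ends of the reachable interval, which is exactly the situation in which the corollary is not a mere restatement of Theorem \ref{Theorem 1}. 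A secondary technical point is that $\pi^*$ has jump discontinuities at $z_-^*$ and $z_+^*$ and so need not belong to $\mathcal{U}$; this is handled by restricting to the feasible subset of $\mathcal{Q}$, on which the ratios $\varphi_\tau/\tau$ avoid the jump points and $\pi^*$ agrees with the continuous $\pi$, so that the continuity of $S_{\pi^*}$ required by Varadhan's lemma is inherited from the proof of Theorem \ref{Theorem 1}.
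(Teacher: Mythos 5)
Your proposal is correct and follows essentially the same route as the paper: the fixed initial segment is shown to be exponentially negligible (the paper's Lemma \ref{Lemma 2} does this for $\pi\in\left(0,1\right)$ by bounding the difference of path log-probabilities by $\left(\left\Vert \log\pi\right\Vert +\left\Vert \log\bar{\pi}\right\Vert \right)m/n$), and for general $\pi\in\mathcal{U}$ the conditioning is argued to act only through reachability, so that the surgery over $\mathcal{Q}$ from the proof of Theorem \ref{Theorem 1} applies verbatim with $\pi^{*}$ in place of $\pi$, the finite-rate trajectories of $I_{\pi^{*}}$ being exactly those whose ratio stays in $\left[z_{-}^{*},z_{+}^{*}\right]$. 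Your only superfluous worry is the alleged jump of $\pi^{*}$ at $z_{\pm}^{*}$: whenever $\left[0,z_{-}^{*}\right)$ or $\left(z_{+}^{*},1\right]$ is nonempty, $z_{-}^{*}$ (resp. $z_{+}^{*}$) is the edge of a plateau where $\pi\equiv1$ (resp. $\pi\equiv0$), so continuity of $\pi$ gives $\pi\left(z_{-}^{*}\right)=1$ and $\pi\left(z_{+}^{*}\right)=0$ and no discontinuity arises.
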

The above results tell us that initial conditions of the kind $\mathbb{P}\left(X_{n,m}=k\right)=\mathbb{I}_{\left\{ k=X_{m}^{*}\right\} }$
can affect the rate function if and only if $\pi\left(s\right)$ is
$0$ or $1$ for some values of $s$. We can easily convince ourselves
of this by observing that if $\pi\in\left(0,1\right)$ then $X_{n,n}$
can reach any point in $\left\{ X_{m},X_{m}+1,\,...\,,X_{m}+\left(n-m\right)\right\} $
in finite time $n-m$ from $X_{n,m}$, while the presence of intervals
with $\pi\left(s\right)=0$ or $1$ can prevent the process from crossing
some values. The proof of the above corollary is in Section \ref{Section3.1.1}.
Notice that we can define $z_{-}^{*}$ and $z_{+}^{*}$ also for $X_{n,1}$
uniform on $\left[0,1\right]$, and in this case we can take 
\begin{equation}
z_{-}^{*}:={\textstyle \inf}\left\{ s:\pi\left(s\right)<1\right\} ,\, z_{+}^{*}:=\sup\left\{ s:\pi\left(s\right)>0\right\} .
\end{equation}
In the following we will consider the above definition, unless some
different initial condition is specified.

Before going ahead some words should be spent on non homogeneous urn
functions. Then, take $\pi\in\mathcal{U}$ with $\pi\in\left(0,1\right)$
and consider a sequence of urn functions $\left\{ \pi_{n}\in\mathcal{U}:\, n\geq0\right\} $
such that for every $n\geq0$ we have $\pi_{n}\left(s\right)\in\left(0,1\right)$
for $s\in\left[0,1\right]$ and $\pi_{n}\rightarrow\pi$ uniformly
on $\left[0,1\right]$. In Section \ref{Section3.1.1} we show that
\begin{cor}
\label{cor:Coroll inhomog.}Take $\pi\in\mathcal{U}$ with $\pi\left(s\right)\in\left(0,1\right)$
and let $\pi_{n}\in\mathcal{U}$ such that $\pi_{n}\left(s\right)\in\left(0,1\right)$
and $\left|\pi_{n}\left(s\right)-\pi\left(s\right)\right|\leq\delta_{n}$,
$\lim_{n}\delta_{n}=0$ for all $s\in\left[0,1\right]$. Then, the
non homogeneous urn process defined by $\pi_{n}$ satisfies the same
Sample-Path LDP of $\pi$.
\end{cor}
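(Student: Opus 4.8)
The plan is to avoid re-running the change-of-measure/Varadhan argument of Theorem \ref{Theorem 1} and instead compare, directly, the law $\mathbb{P}^{\mathrm{inh}}$ of the inhomogeneous chain --- the chain using $\pi_{k}$ at step $k$, with the same initial condition as the $\pi$-chain (the constant-$\pi_{n}$ reading is handled identically and more easily) --- with the law $\mathbb{P}$ of the homogeneous chain governed by $\pi$. The hypotheses force uniform separation from $0$ and $1$: $\pi$ is continuous on the compact $\left[0,1\right]$ with values in $\left(0,1\right)$, hence $c_{0}\le\pi\le C_{0}$ for some $0<c_{0}\le C_{0}<1$; by $\pi_{n}\to\pi$ uniformly there are an index $K$ and constants $0<c\le C<1$ with $\pi_{n}\left(s\right)\in\left[c,C\right]$ for all $n\ge K$ and $s$; and the finitely many remaining $\pi_{n}$ are continuous and $\left(0,1\right)$-valued on a compact, hence bounded away from $0$ and $1$ too. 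Consequently every transition probability of both chains lies strictly in $\left(0,1\right)$, the two laws charge every finite lattice path from the common starting state, they are mutually absolutely continuous, and the initial law cancels in the Radon--Nikodym derivative.

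Next I would estimate that derivative: evaluated at any path,
\[
\frac{d\mathbb{P}^{\mathrm{inh}}}{d\mathbb{P}}=\prod_{k=1}^{n-1}\left(\tfrac{\pi_{k}\left(x_{n,k}\right)}{\pi\left(x_{n,k}\right)}\right)^{\delta X_{n,k}}\left(\tfrac{\bar{\pi}_{k}\left(x_{n,k}\right)}{\bar{\pi}\left(x_{n,k}\right)}\right)^{1-\delta X_{n,k}},
\]
so, with $\delta_{k}:=\sup_{s}\left|\pi_{k}\left(s\right)-\pi\left(s\right)\right|$, the mean value theorem together with the lower bounds gives $\left|\log\left(\pi_{k}\left(s\right)/\pi\left(s\right)\right)\right|\le\delta_{k}/c$ and $\left|\log\left(\bar{\pi}_{k}\left(s\right)/\bar{\pi}\left(s\right)\right)\right|\le\delta_{k}/\left(1-C\right)$ for every $s$ and every $k\ge K$, while each of the finitely many terms with $k<K$ is bounded by a constant $M$ uniformly in the path. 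Hence $\bigl|\log\tfrac{d\mathbb{P}^{\mathrm{inh}}}{d\mathbb{P}}\bigr|\le L\sum_{k=1}^{n-1}\delta_{k}+KM=:n\epsilon_{n}$ on all of path space, with $L:=\max\left(c^{-1},\left(1-C\right)^{-1}\right)$; and since $\delta_{k}\to0$ the Ces\`aro average $\epsilon_{n}=\bigl(L\sum_{k=1}^{n-1}\delta_{k}+KM\bigr)/n\to0$.

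It then remains only to transfer the principle. For any Borel $\mathcal{B}\subseteq\mathcal{Q}$, integrating $e^{-n\epsilon_{n}}\le\tfrac{d\mathbb{P}^{\mathrm{inh}}}{d\mathbb{P}}\le e^{n\epsilon_{n}}$ over $\mathcal{B}$ yields $e^{-n\epsilon_{n}}\mathbb{P}\left(\chi_{n}\in\mathcal{B}\right)\le\mathbb{P}\left(\chi_{n}^{\mathrm{inh}}\in\mathcal{B}\right)\le e^{n\epsilon_{n}}\mathbb{P}\left(\chi_{n}\in\mathcal{B}\right)$, so $\bigl|n^{-1}\log\mathbb{P}\left(\chi_{n}^{\mathrm{inh}}\in\mathcal{B}\right)-n^{-1}\log\mathbb{P}\left(\chi_{n}\in\mathcal{B}\right)\bigr|\le\epsilon_{n}\to0$. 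Applying this with $\mathcal{B}$ replaced by $\mathrm{int}\left(\mathcal{B}\right)$ and by $\mathrm{cl}\left(\mathcal{B}\right)$, and invoking (\ref{eq:1.8})--(\ref{eq:1.9}) of Theorem \ref{Theorem 1}, gives those same bounds for $\chi_{n}^{\mathrm{inh}}$ with the identical good rate function $I_{\pi}$; that $\chi_{n}^{\mathrm{inh}}$ is $\mathcal{Q}$-valued is immediate from (\ref{eq:2.1-2}).

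The argument is soft, and the only truly indispensable ingredient is the uniform separation of $\pi$ and the $\pi_{n}$ from $0$ and $1$ --- precisely the hypothesis $\pi,\pi_{n}\in\left(0,1\right)$ in the statement. Without it the logarithmic factors $\log\left(\pi_{k}/\pi\right)$ need not be summably small relative to $n$, and this is not an artefact of the method: the discussion around Corollary \ref{cor:CorrollariINIT} shows that intervals on which $\pi\in\left\{0,1\right\}$ genuinely alter the rate function, so no stronger conclusion is available. I therefore do not expect a real obstacle; the only bookkeeping is $n^{-1}\sum_{k\le n}\delta_{k}\to0$, which is immediate from $\delta_{k}\to0$.
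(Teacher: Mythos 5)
Your argument is correct, and it reaches the conclusion by a somewhat different route than the paper. The paper stays inside its change-of-measure framework: it invokes Lemma \ref{Lemma 5} to replace $n^{-1}\log\mathbb{P}\left(\chi_{n}=\varphi\right)$ by the functional $S_{\pi}\left[\varphi\right]$ and then shows the uniform bound $\left|S_{\pi_{n}}\left[\varphi\right]-S_{\pi}\left[\varphi\right]\right|\leq\left[1/\left(1-\left\Vert \bar{\pi}\right\Vert \right)+1/\left(1-\left\Vert \pi\right\Vert \right)\right]\delta_{n}$, so that the inhomogeneous process has the same exponential path weights and the Varadhan step of Theorem \ref{Theorem 1} goes through unchanged. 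You instead bypass $S_{\pi}$ and Lemma \ref{Lemma 5} entirely: you compare the two laws on path space through the Radon--Nikodym derivative, bound its logarithm uniformly by $n\epsilon_{n}$ with $\epsilon_{n}\rightarrow0$, and transfer the bounds (\ref{eq:1.8})--(\ref{eq:1.9}) by a direct sandwich of event probabilities. The key estimate is the same in both proofs --- the hypothesis $\pi,\pi_{n}\in\left(0,1\right)$ gives, via compactness, a uniform bound of order $\delta_{n}$ on the log-ratios of transition probabilities --- but your implementation is more elementary (no re-run of the Varadhan argument, the transfer is an exponential-equivalence statement valid for every Borel set) and slightly more general: through the Ces\`aro averaging of $\sum_{k}\delta_{k}$ you also cover the genuinely time-inhomogeneous reading in which step $k$ uses $\pi_{k}$, whereas the paper's bound, involving the single functional $S_{\pi_{n}}$, addresses the reading in which the whole length-$n$ trajectory is governed by $\pi_{n}$. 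Your closing remark on why the $\left(0,1\right)$ restriction is essential is also consistent with the paper's discussion around Corollary \ref{cor:CorrollariINIT}.
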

We restricted our statement to urns with $\pi\left(s\right)\in\left(0,1\right)$,
$\pi_{n}\left(s\right)\in\left(0,1\right)$ to avoid some technical
issues which would arise if we consider the whole set $\mathcal{U}$,
but it is possible to generalize this result on the basis of the same
considerations made for Theorem \ref{Theorem 1}. We hope to address
this extension in a future work.

\subsection{Entropy of the event $X_{n,n}=\left\lfloor sn\right\rfloor $.}

\label{Section2.2}Our main interest in Theorem \ref{Theorem 1} comes
from the fact that Sample-Path LDPs allow to approach some important
Large Deviation questions about the urn evolution from the point of
view of functional analysis. In this work our attention will mainly
focus on the entropy of the event $X_{n,n}=\left\lfloor sn\right\rfloor $,
$s\in\left[0,1\right]$. First we show that the limit 
\begin{equation}
\phi\left(s\right):=\lim_{n\rightarrow\infty}n^{-1}\log\mathbb{P}\left(X_{n,n}=\left\lfloor sn\right\rfloor \right),\label{eq:1.15}
\end{equation}
exists for every $\pi\in\mathcal{U}$, and has the following variational
representation:
\begin{thm}
\label{Theorem 2}The limit $\phi\left(s\right)$ defined in Eq. (\ref{eq:1.15})
exists for any $\pi\in\mathcal{U}$ and is given by the variational
problem 
\begin{equation}
\phi\left(s\right)=-\inf_{\varphi\in\mathcal{Q}_{s}}I_{\pi}\left[\varphi\right],
\end{equation}
where $\mathcal{Q}_{s}:=\left\{ \varphi\in\mathcal{Q}:\,\varphi_{1}=s\right\} $
and $I_{\pi}$ is the rate function of Theorem \ref{Theorem 1}. If
we consider an initial condition $\chi_{n,m/n}^{*}=n^{-1}X_{m}^{*}$
for some $0<m\leq n$ and $0\leq X_{m}^{*}\leq m$ the same result
holds with $I_{\pi^{*}}$ in place of $I_{\pi}$ and $\pi^{*}$ as
in Corollary \ref{cor:CorrollariINIT}.
\end{thm}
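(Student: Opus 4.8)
The plan is to derive Theorem \ref{Theorem 2} as a reasonably direct consequence of the Sample-Path LDP of Theorem \ref{Theorem 1}, by expressing the event $\left\{X_{n,n}=\left\lfloor sn\right\rfloor\right\}$ in terms of the endpoint of the interpolated path $\chi_n$ and then applying the contraction-type argument that turns the functional LDP into an LDP for the scalar $\chi_{n,1}$. First I would observe that $\chi_{n,1}=n^{-1}X_{n,n}$ exactly (since $\lfloor n\cdot 1\rfloor=n$ and $\delta X_{n,n}$ is multiplied by $0$), so the event $\left\{X_{n,n}=\left\lfloor sn\right\rfloor\right\}$ is precisely $\left\{\chi_{n,1}=\lfloor sn\rfloor/n\right\}$, i.e.\ $\left\{\chi_n\in\mathcal{B}_{n,s}\right\}$ with $\mathcal{B}_{n,s}:=\left\{\varphi\in\mathcal{Q}:\varphi_1=\lfloor sn\rfloor/n\right\}$.

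The core of the argument is to show $\limsup_n n^{-1}\log\mathbb{P}(\chi_n\in\mathcal{B}_{n,s})\le-\inf_{\mathcal{Q}_s}I_\pi$ and the matching liminf lower bound. For the upper bound I would fix $\eta>0$ and enclose $\mathcal{B}_{n,s}$ inside the closed set $\overline{\mathcal{B}}_\eta:=\left\{\varphi\in\mathcal{Q}:|\varphi_1-s|\le\eta\right\}$ for all large $n$ (using $|\lfloor sn\rfloor/n-s|\le1/n$), apply \eqref{eq:1.9} to get $\limsup_n n^{-1}\log\mathbb{P}\le-\inf_{\overline{\mathcal{B}}_\eta}I_\pi$, and then let $\eta\downarrow0$, invoking lower semicontinuity of the good rate function $I_\pi$ together with compactness of its sublevel sets to conclude $\lim_{\eta\downarrow0}\inf_{\overline{\mathcal{B}}_\eta}I_\pi=\inf_{\mathcal{Q}_s}I_\pi$; this last continuity-of-the-infimum step is the standard ``$\Gamma$-limit'' lemma for good rate functions. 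For the lower bound I would fix $\eta>0$ and take the open set $\mathcal{O}_\eta:=\left\{\varphi\in\mathcal{Q}:|\varphi_1-s|<\eta\right\}$, which for large $n$ contains $\mathcal{B}_{n,s}$; but since $\mathcal{B}_{n,s}$ is a ``thin'' set (codimension one) the open-set bound \eqref{eq:1.8} does not directly apply to it, so I would instead bound below by the probability that $\chi_{n,1}$ lands in a small window and then pass to the exact value. Concretely, the cleanest route is to show $\mathbb{P}(\chi_{n,1}=\lfloor sn\rfloor/n)$ and $\mathbb{P}(|\chi_{n,1}-s|<\eta)$ are logarithmically equivalent: the latter is a sum of at most $2\eta n+1$ terms of the form $\mathbb{P}(X_{n,n}=j)$, so $\max_j\mathbb{P}(X_{n,n}=j)\ge(2\eta n+1)^{-1}\mathbb{P}(|\chi_{n,1}-s|<\eta)$, which already gives a near-optimal bound for the maximizing $j$; transferring this to the specific value $j=\lfloor sn\rfloor$ requires a ``local'' comparison between nearby values of $\mathbb{P}(X_{n,n}=j)$.

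The main obstacle is exactly this last point: establishing that $n^{-1}\log\mathbb{P}(X_{n,n}=j)$ varies slowly (with vanishing increments per unit $n$) as $j$ ranges over $\lfloor\eta n\rfloor$-neighborhoods, so that the value at $\lfloor sn\rfloor$ matches the supremum over the window up to $o(n)$. I would handle this by a direct path-surgery / change-of-endpoint estimate: given a near-optimal trajectory reaching some $j'$ with $|j'-\lfloor sn\rfloor|\le \eta n$, I can modify it on the last $O(\eta n)$ steps to reach $\lfloor sn\rfloor$ instead, paying only a probability cost $e^{-O(\eta n)}$ coming from the fact that along such a correction each step contributes a bounded log-probability (here I use $\pi\in(0,1)$ on the relevant range, i.e.\ the restriction to $[z_-^*,z_+^*]$, exactly as in Corollary \ref{cor:CorrollariINIT}, which is why the $\pi\to\pi^*$ replacement appears in the statement); letting $\eta\downarrow0$ after $n\to\infty$ then squeezes the liminf up to $-\inf_{\mathcal{Q}_s}I_\pi$. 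Finally, the existence of the limit $\phi(s)$ follows because the matching liminf and limsup bounds coincide, and the initial-condition version is obtained verbatim by feeding the LDP of Corollary \ref{cor:CorrollariINIT} (with rate function $I_{\pi^*}$ and the constrained path space) through the same two bounds, noting that $\mathcal{Q}_s$ is unchanged since the constraint $\varphi_{m/n}=n^{-1}X_m^*$ is already built into the admissible paths.
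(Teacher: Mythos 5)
Your overall architecture --- enclose the point event in a shrinking endpoint window, apply the Sample-Path LDP of Theorem \ref{Theorem 1} to the window, and then control the discrepancy between $\mathbb{P}\left(X_{n,n}=\left\lfloor sn\right\rfloor\right)$ and the window probability by a local comparison of $\mathbb{P}\left(X_{n,n}=j\right)$ for nearby $j$ --- is exactly the paper's strategy (Lemma \ref{lemma9} plus the combinatorial estimate of Section \ref{Section3.2.1}), and your upper bound is fine. The genuine gap is in the implementation of the lower-bound comparison. Since $X_{n,k}$ is non-decreasing in $k$ and the horizon $n$ is fixed, a surgery confined to the last $O\left(\eta n\right)$ steps cannot convert a generic window path into one ending exactly at $\left\lfloor sn\right\rfloor$: if the path ends at $j'>\left\lfloor sn\right\rfloor$ it may already exceed $\left\lfloor sn\right\rfloor$ before the editable stretch (no tail edit can lower the endpoint), and if $j'<\left\lfloor sn\right\rfloor$ but its last $m=O\left(\eta n\right)$ increments are (nearly) all equal to $1$, the deficit $\left\lfloor sn\right\rfloor-X_{n,n-m}$ exceeds the number $m$ of editable steps, so the target is unreachable; such paths need not be negligible (when $\pi$ is close to $1$ near $s$, near-optimal paths have mostly-up tails). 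The repair is to insert up-steps at arbitrary positions along the path, as the paper does with the operators $\hat{u}_{h}$; but then every later evaluation point $\varphi_{\tau}/\tau$ shifts by up to $m/n$, so a per-step bound no longer suffices and one must invoke the uniform modulus $W_{\pi}$ of Lemma \ref{Lemma 5-1}, together with an explicit multiplicity correction (the factor $\prod_{j}\left(k+j\right)^{-1}$ in the paper's Eq. (\ref{eq:3.12})); this is precisely the quantitative content your sketch omits.

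A second gap: Theorem \ref{Theorem 2} is claimed for every $\pi\in\mathcal{U}$, including urn functions attaining $0$ or $1$, where $\left\Vert \log\pi\right\Vert$ and $\left\Vert \log\bar{\pi}\right\Vert$ blow up and both your per-step cost bound and Lemma \ref{Lemma 5-1} fail. Restricting to $\left[z_{-}^{*},z_{+}^{*}\right]$ does not cure this (inside that interval $\pi$ may still hit $0$ or $1$), and the modified urn $\pi^{*}$ of Corollary \ref{cor:CorrollariINIT} concerns fixed initial conditions, not this issue; the paper instead re-runs the argument on the subsets $\mathcal{Q}_{k}$ with the regularized functions $\pi_{\epsilon}^{\pm}$ and takes $\epsilon\rightarrow0$ as in the proof of Theorem \ref{Theorem 1}, and some version of that step is needed in your proof as well.
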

Notice that Theorem \ref{Theorem 1} can not be directly applied to
the Eq. (\ref{eq:1.15}) in order to obtain Theorem \ref{Theorem 2},
since this is a stronger statement than what one obtains by the contraction
principle. To prove Theorem \ref{Theorem 2} we integrated Theorem
\ref{Theorem 1} with a combinatorial argument: the proof can be found
in Section \ref{Section3.2.1}.

\subsubsection{Optimal trajectories.}

\label{Section2.2.1}Since the variational problem in Theorem \ref{Theorem 2}
heavily depends on the choice of $\pi$, a general characterization
of $\phi\left(s\right)$ would be a quite hard nut to crack. Anyway,
we still can prove many interesting facts on the shape of $\phi\left(s\right)$.
Most important, we can prove that $\phi\left(s\right)=0$ when $s\in\left[\,\inf C_{\pi},\,\sup C_{\pi}\right]$
and $\phi\left(s\right)<0$ otherwise. 
\begin{cor}
\label{Corollary 3}For any $\pi\in\mathcal{U}$: $\phi\left(s\right)=0$
when $s\in\left[\,\inf\, C_{\pi},\,\sup\, C_{\pi}\right]$ and $\phi\left(s\right)<0$
otherwise, where $C_{\pi}$ is the contact set of $\pi$ defined by
Eq. (\ref{eq:1.4}). Moreover, $\phi\left(s\right)>-\infty$ for $s\in\left(z_{-}^{*},\inf C_{\pi}\right)$
and $s\in\left(\sup\, C_{\pi},z_{+}^{*}\right)$, while $\phi\left(s\right)=-\infty$
for $s\in\left[0,z_{-}^{*}\right]$ and $s\in\left[z_{+}^{*},1\right]$.
\end{cor}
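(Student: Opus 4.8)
The plan is to extract everything from the variational characterization in Theorem \ref{Theorem 2}, so that the main work is constructing near-optimal trajectories and bounding $I_\pi$ along them. Recall $\phi(s)=-\inf_{\varphi\in\mathcal Q_s}I_\pi[\varphi]$ with $I_\pi=J-S_\pi$, $J[\varphi]=\int_0^1 H(\dot\varphi_\tau)\,d\tau\ge 0$ and $S_\pi[\varphi]\le 0$, so $I_\pi[\varphi]\ge 0$ always and $\phi(s)\le 0$ for every $s$. Hence the whole statement reduces to: (i) exhibiting, for $s\in[\inf C_\pi,\sup C_\pi]$, a trajectory $\varphi\in\mathcal Q_s$ with $I_\pi[\varphi]=0$; (ii) showing $I_\pi[\varphi]>0$ for all $\varphi\in\mathcal Q_s$ when $s\notin[\inf C_\pi,\sup C_\pi]$; (iii) exhibiting a finite-cost trajectory reaching $s$ when $s\in(z_-^*,\inf C_\pi)\cup(\sup C_\pi,z_+^*)$; and (iv) showing the infimum is $+\infty$ when $s\le z_-^*$ or $s\ge z_+^*$.

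For (i), the natural candidate is the deterministic flow trajectory: the solution of $\dot\varphi_\tau=\pi(\varphi_\tau/\tau)$, i.e. the ``law of large numbers path'' of the urn. Along such a path $\dot\varphi_\tau=\pi(\varphi_\tau/\tau)$ and $\dot{\tilde\varphi}_\tau=\bar\pi(\varphi_\tau/\tau)$, so $H(\dot\varphi_\tau)=\dot\varphi_\tau\log\dot\varphi_\tau+\dot{\tilde\varphi}_\tau\log\dot{\tilde\varphi}_\tau$ exactly cancels the integrand of $S_\pi$, giving $I_\pi[\varphi]=0$. The subtlety is that the flow path starting from an arbitrary initial fraction converges (by the strong-convergence theory reviewed in Section \ref{Section1.2}) to a point of $C_\pi$, but not necessarily to the prescribed $s$; and near $\tau=0$ the ODE is singular. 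I would handle this by noting that because $X_{n,1}$ is uniform on $[0,1]$ we may start the flow at any $x_0\in[0,1]$ at a small time $\epsilon$ (this contributes cost $o(1)$ as $\epsilon\to0$, using the class-$\mathcal U$ condition exactly as in the Varadhan argument behind Theorem \ref{Theorem 1}), and then choose $x_0$ so the flow converges to a contact; for a target $s$ strictly between two contacts one can also glue a flow segment to a short linear segment of vanishing cost, or invoke that $\phi$ is the negative of a rate function hence upper semicontinuous, together with the fact that both endpoints $\inf C_\pi$ and $\sup C_\pi$ are attained with zero cost and $\phi\le 0$, to fill in the interval. Here one must be a little careful that $\mathcal Q_s$ is nonempty and that the constructed paths actually lie in $\mathcal Q$ (slopes in $[0,1]$), which is automatic for flow paths since $\pi\in[0,1]$.

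For (ii), suppose $s>\sup C_\pi$ (the case $s<\inf C_\pi$ is symmetric) and take any $\varphi\in\mathcal Q_s$ with $I_\pi[\varphi]<\infty$, so $\varphi$ is absolutely continuous. The point is that $I_\pi[\varphi]=0$ would force $H(\dot\varphi_\tau)=-[\dot\varphi_\tau\log\pi(\varphi_\tau/\tau)+\dot{\tilde\varphi}_\tau\log\bar\pi(\varphi_\tau/\tau)]$ a.e.; by the strict concavity/Gibbs-inequality property of $H$ (equivalently, nonnegativity of the relative entropy between the Bernoulli($\dot\varphi_\tau$) and Bernoulli($\pi(\varphi_\tau/\tau)$) laws) this equality holds iff $\dot\varphi_\tau=\pi(\varphi_\tau/\tau)$ for a.e.\ $\tau$. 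Thus a zero-cost path must solve the flow ODE, and by the strong-convergence results its terminal value $\varphi_1$ must lie in $C_\pi\subseteq[\inf C_\pi,\sup C_\pi]$, contradicting $\varphi_1=s>\sup C_\pi$. Since $I_\pi$ is a good rate function and $\mathcal Q_s$ is closed, the infimum over the (nonempty, by (iii)) feasible set is attained and is therefore strictly positive, giving $\phi(s)<0$. The main obstacle I anticipate is precisely this step: making rigorous that ``$I_\pi[\varphi]=0\Rightarrow\varphi$ solves the flow ODE $\Rightarrow \varphi_1\in C_\pi$'' requires the uniqueness/attractor structure of the nonautonomous ODE $\dot\varphi=\pi(\varphi/\tau)$, which is standard only away from $\tau=0$ and under the regularity afforded by $\mathcal U$; one needs to rule out pathological a.e.-solutions, e.g.\ by working in rescaled time $u=\log\tau$ where the equation becomes autonomous, and then invoking that $\pi(x)-x$ has constant sign on each $K_{\pi,i}$ so trajectories are monotone between contacts and cannot exit $[\inf C_\pi,\sup C_\pi]$.

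For (iii) and (iv), the dichotomy is driven by the intervals where $\pi$ is $0$ or $1$. When $s\in(\sup C_\pi,z_+^*)$, by definition of $z_+^*=\sup\{s':\pi(s')>0\}$ there is no interval of the form $(\text{something},1)$ on which $\pi\equiv 0$ blocking the path below $s$, so one can build an absolutely continuous $\varphi\in\mathcal Q_s$ on which $\varphi_\tau/\tau$ stays in a region where $\pi\in(0,1)$ for $\tau$ bounded away from $0$ and $1$; then both $\log\pi(\varphi_\tau/\tau)$ and $\log\bar\pi(\varphi_\tau/\tau)$ are bounded on a set of full measure, $S_\pi[\varphi]>-\infty$, and $J[\varphi]<\infty$ trivially for a Lipschitz path, whence $I_\pi[\varphi]<\infty$ and $\phi(s)>-\infty$. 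Conversely, if $s>z_+^*$ (or $s=z_+^*$ with the sup attained on an interval where $\pi=0$), then reaching $\varphi_1=s$ forces $\varphi_\tau/\tau$ to spend a positive-measure set of times in $\{\pi=0\}$ with $\dot\varphi_\tau>0$ required, making the $d\varphi_\tau\log\pi(\varphi_\tau/\tau)$ term equal to $-\infty$; more carefully, any $\varphi\in\mathcal Q_s$ must at some point increase past the threshold $z_+^*$, and on the corresponding time-interval $\pi(\varphi_\tau/\tau)=0$ while $\dot\varphi_\tau$ cannot be $0$ throughout (else $\varphi_1<s$), forcing $S_\pi[\varphi]=-\infty$ and hence $I_\pi[\varphi]=+\infty$ for every feasible $\varphi$, i.e.\ $\phi(s)=-\infty$. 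I would phrase this last argument as: decompose $[0,1]$ into the time-set where $\varphi_\tau/\tau>z_+^*$, show it has positive measure and positive ``net increase'' of $\varphi$, and on it the integrand defining $S_\pi$ is $-\infty$ on a positive-measure subset. The cases $s\le z_-^*$ and $s\in(z_-^*,\inf C_\pi)$ are handled symmetrically with $\bar\pi$ and $z_-^*=\inf\{s':\pi(s')<1\}$ in place of $\pi$ and $z_+^*$.
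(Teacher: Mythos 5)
Your overall architecture is the same as the paper's: start from the variational formula of Theorem \ref{Theorem 2}, note $I_{\pi}\geq 0$, characterize zero-cost paths via the Gibbs inequality (your relative-entropy step is exactly the paper's Lemma \ref{lemma 12}), analyze the resulting ODE $\dot{\varphi}_{\tau}=\pi\left(\varphi_{\tau}/\tau\right)$, and use the intervals where $\pi\in\left\{0,1\right\}$ for the $\pm\infty$ dichotomy. Your remark that goodness of $I_{\pi}$ and closedness of $\mathcal{Q}_{s}$ are needed to upgrade ``no zero-cost path'' to $\phi\left(s\right)<0$, and your positive-measure/positive-increase argument giving $S_{\pi}\left[\varphi\right]=-\infty$ beyond $z_{\pm}^{*}$, are sound (the latter is not even spelled out in the paper). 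However, there are two genuine gaps. First, the construction of zero-cost trajectories ending at a prescribed $s$ strictly between contacts, i.e.\ $s\in K_{\pi,i}$ with $a_{\pi,i}\neq0$: none of your three fixes works as stated. Upper semicontinuity of $\phi$ plus $\phi=0$ at the two endpoints $\inf C_{\pi},\sup C_{\pi}$ says nothing about interior points ($\phi$ is not concave in general). A ``short'' glued segment cannot bridge a macroscopic gap, because the fraction $u_{\tau}=\varphi_{\tau}/\tau$ satisfies $\left|\dot{u}_{\tau}\right|\leq 2/\tau$ and so moves only $O\left(\delta\right)$ in time $\delta$. And ``choose $x_{0}$ so the flow converges to a contact'' is the wrong target: the problem is to hit the non-contact point $s$ at time exactly $1$. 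The paper's Corollary \ref{Corollary4} resolves this by integrating the ODE backward from $\varphi_{1}=s$: the zero-cost path must emanate from the \emph{unstable} boundary equilibrium $s_{i}^{*}$ of $K_{\pi,i}$, and when $1/\left(\pi\left(z\right)-z\right)$ is integrable at $s_{i}^{*}$ it must sit at $s_{i}^{*}$ on $\left[0,\tau_{s,i}^{*}\right]$ before leaving (a non-uniqueness branch of the ODE). In that integrable case your ``start the flow at time $\epsilon$ with tuned $x_{0}$'' scheme fails outright: for $\epsilon<\exp\left(-F_{\pi}\left(s,s_{i}^{*}\right)\right)$ every flow started in the open interval overshoots $s$ by time $1$, so the time-inhomogeneous trajectory is not an optional refinement but the only construction available.

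Second, in the exclusion step you argue that a zero-cost path must have $\varphi_{1}\in C_{\pi}$ ``by the strong-convergence results''. This is both a category error (Hill--Lane--Sudderth/Pemantle concern almost-sure limits of the stochastic process, not solutions of the deterministic flow) and a false conclusion: by Corollary \ref{Corollary4} zero-cost paths end at every point of $\left[\inf C_{\pi},\sup C_{\pi}\right]$, not only at contacts. Your later hint (monotonicity of the autonomous equation in $\log\tau$) points in the right direction but your ``cannot exit $\left[\inf C_{\pi},\sup C_{\pi}\right]$'' phrasing does not exclude a zero-cost path living entirely in $K_{\pi,0}$ or $K_{\pi,N}$. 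The paper's argument is the one you need: running the ODE backward from $s\in K_{\pi,0}$ (say), the implicit solution $F_{\pi}\left(s,u_{\tau}\right)=-\log\tau$ forces $F_{\pi}$ to diverge as $\tau\rightarrow0$, hence $u_{\tau}$ must approach a zero of $\pi\left(z\right)-z$ on that side; but $K_{\pi,0}\neq\varnothing$ implies $\pi\left(0\right)>0$, so no such zero exists and no zero-cost path ends in $K_{\pi,0}$ (symmetrically for $K_{\pi,N}$). Finally, a minor point in your favor: your hedge about the endpoint $s=z_{+}^{*}$ is reasonable, since whether the cost there is infinite depends on the integrability of $\log\pi$ as $s\uparrow z_{+}^{*}$, a case your positive-increase argument does not cover and which deserves a separate sentence in any complete write-up.
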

The above corollary is obtained by proving that we can find a trajectory
$\varphi^{*}\in\mathcal{Q}_{s}$ such that $I_{\pi}\left[\varphi^{*}\right]=0$
for any $s\in\left[\,\inf C_{\pi},\,\sup C_{\pi}\right]$, while this
is not possible if $s\in K_{\pi,0}$ or $K_{\pi,N}$. Also, we are
able to give an explicit characterization of the optimal trajectories
$\varphi^{*}$. We enunciate this result in two separate corollaries:
the first deals with trajectories that end in $s\in K_{\pi,i}$, $1\leq i\leq N-1$,
while the second deals with trajectories that end in $s\in\partial C_{\pi}$
(as we shall see, Corollary \ref{Corollary 3} is an almost direct
consequence of the following two) .
\begin{cor}
\label{Corollary4}Let $K_{\pi}$, $A_{\pi}$ be as in Eq.s (\ref{eq:1.4.1}),
(\ref{eq:1.4.2}). For any $s\in K_{\pi,i}$ a zero-cost trajectory
$\varphi^{*}\in\mathcal{Q}_{s}$ with \textup{$\tau^{-1}\varphi_{\tau}^{*}\in K_{\pi,i}\cup\partial K_{\pi,i}$,
$\tau\in\left[0,1\right]$} exists such that $I_{\pi}\left[\varphi^{*}\right]=0$,
and it can be constructed as follows. If $a_{\pi,i}=0$ then we can
take $\varphi^{*}=s\tau$ as in the Polya-Eggenberger urn. If $a_{\pi,i}\neq0$
let 
\begin{equation}
F_{\pi}\left(s,u\right):=\int_{u}^{s}\frac{dz}{\pi\left(z\right)-z}.
\end{equation}
Also, for $s\in K_{\pi,i}$ define the constants
\begin{equation}
s_{i}^{*}:=\mathbb{I}_{\left\{ a_{\pi,i}=1\right\} }\inf K_{\pi,i}+\mathbb{I}_{\left\{ a_{\pi,i}=-1\right\} }\sup K_{\pi,i},
\end{equation}
\begin{equation}
\tau_{s,i}^{*}:=\exp\left(-{\textstyle \lim_{\, a_{\pi,i}(u-s_{i}^{*})\rightarrow0^{+}}\left|F_{\pi}\left(s,u\right)\right|}\right).
\end{equation}
and denote by $F_{\pi,s}^{-1}$ the inverse function of $F_{\pi}\left(s,u\right)$
for \textup{$u\in K_{\pi,i}\cup\partial K_{\pi,i}$: }
\begin{equation}
F_{\pi,s}^{-1}:=\left\{ F_{\pi,s}^{-1}\left(q\right),\, q\in\left[0,\log\left(1/\tau_{s,i}^{*}\right)\right):\, F_{\pi}\left(s,F_{\pi,s}^{-1}\left(q\right)\right)=q\right\} .
\end{equation}
Then, if $a_{\pi,i}\neq0$ the zero-cost trajectory is given by $\varphi_{\tau}^{*}=\tau u_{\tau}^{*}$,
with
\begin{equation}
u_{\tau}^{*}:=F_{\pi,s}^{-1}\left(\log\left(1/\tau\right)\right)\,\mathbb{I}_{\left\{ \tau\in\left(\tau_{s,i}^{*},1\right]\right\} }+s_{i}^{*}\,\mathbb{I}_{\left\{ \tau\in\left[0,\tau_{s,i}^{*}\right]\right\} }.
\end{equation}

\end{cor}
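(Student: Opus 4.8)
The plan is to construct the claimed zero-cost trajectory explicitly and verify that $I_{\pi}[\varphi^{*}]=0$, i.e. that both $J[\varphi^{*}]=0$... wait, that can't be right since $J\geq -\log 2$ with equality only at constant slope $1/2$. Let me reconsider: the functional $I_{\pi}=J-S_{\pi}$, and $S_{\pi}$ is a sum of log-probability terms, so the correct reading is that along the ``typical'' law-of-large-numbers trajectory the two pieces cancel. So the real content is to identify the deterministic ODE whose solution makes $J[\varphi]=S_{\pi}[\varphi]$ pointwise, and to check that this ODE is exactly the one whose solution is written in the statement.

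First I would write $\varphi_{\tau}=\tau u_{\tau}$, so that $u_{\tau}=\varphi_{\tau}/\tau$ is the fraction of black balls at rescaled time $\tau$, and compute $\dot\varphi_{\tau}=u_{\tau}+\tau\dot u_{\tau}$. The naive deterministic drift of the urn is $\mathbb{E}(\delta X \mid X_{n,k})=\pi(x_{n,k})$, so the ``free'' (zero-cost) dynamics should satisfy $\dot\varphi_{\tau}=\pi(u_{\tau})$, giving the ODE $\tau\dot u_{\tau}=\pi(u_{\tau})-u_{\tau}$, i.e. $d u/(\pi(u)-u)=d\tau/\tau=d(\log\tau)$. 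Integrating from the terminal condition $u_{1}=s$ backwards in $q=\log(1/\tau)$ gives exactly $F_{\pi}(s,u_{\tau})=\log(1/\tau)$, which inverts to $u_{\tau}^{*}=F_{\pi,s}^{-1}(\log(1/\tau))$ — matching the statement. Since $\pi(u)-u$ has constant sign $a_{\pi,i}$ on $K_{\pi,i}$, this solution is monotone in $\tau$ and stays in $K_{\pi,i}$, flowing as $\tau\to 0$ toward the repelling endpoint $s_{i}^{*}$ (the one we leave, which is $\inf K_{\pi,i}$ if $\pi>$ identity there, else $\sup K_{\pi,i}$); the time it takes to reach that endpoint is $\tau_{s,i}^{*}=\exp(-\lim|F_{\pi}|)$, and for $\tau\le\tau_{s,i}^{*}$ we simply sit at $u^{*}=s_{i}^{*}$, which is consistent because at a contact $\pi(s_{i}^{*})-s_{i}^{*}=0$ makes the constant function a solution. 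When $a_{\pi,i}=0$ the ODE degenerates and $\varphi^{*}=s\tau$ (the Polya-Eggenberger martingale trajectory) works trivially.

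Next I would verify $I_{\pi}[\varphi^{*}]=0$. Along $\varphi^{*}$ the increment measure is $d\varphi_{\tau}=\pi(u_{\tau}^{*})\,d\tau$ and $d\tilde\varphi_{\tau}=\bar\pi(u_{\tau}^{*})\,d\tau$, so $S_{\pi}[\varphi^{*}]=\int_{0}^{1}[\pi(u^{*})\log\pi(u^{*})+\bar\pi(u^{*})\log\bar\pi(u^{*})]\,d\tau=\int_{0}^{1}H(\pi(u_{\tau}^{*}))\,d\tau$ (with $H$ as defined in Theorem \ref{Theorem 1}), while $J[\varphi^{*}]=\int_{0}^{1}H(\dot\varphi_{\tau}^{*})\,d\tau=\int_{0}^{1}H(\pi(u_{\tau}^{*}))\,d\tau$ since $\dot\varphi^{*}=\pi(u^{*})$. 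Hence $I_{\pi}[\varphi^{*}]=J[\varphi^{*}]-S_{\pi}[\varphi^{*}]=0$. On the flat piece $\tau\in[0,\tau_{s,i}^{*}]$ one has $\dot\varphi^{*}=s_{i}^{*}=\pi(s_{i}^{*})$ as well, so the identity persists there. I also need to check admissibility: $\varphi^{*}\in\mathcal{Q}$ requires $\dot\varphi^{*}\in[0,1]$, which holds since $\dot\varphi^{*}=\pi(u^{*})\in[0,1]$, and $\varphi_{0}^{*}=0$, which holds since $\varphi_{\tau}^{*}=\tau u_{\tau}^{*}\to 0$ as $\tau\to 0$ because $u^{*}$ is bounded; absolute continuity is clear from the explicit formula. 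Finally $\tau^{-1}\varphi_{\tau}^{*}=u_{\tau}^{*}\in K_{\pi,i}\cup\partial K_{\pi,i}$ by the monotonicity just discussed.

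The main obstacle I anticipate is the behavior near $\tau=0$ and the well-posedness of the objects $\tau_{s,i}^{*}$ and $F_{\pi,s}^{-1}$: one must show that $\lim_{a_{\pi,i}(u-s_{i}^{*})\to 0^{+}}|F_{\pi}(s,u)|$ is either a finite positive number (the integrand $1/(\pi(z)-z)$ is integrable up to $s_{i}^{*}$, giving a genuine switching time $\tau_{s,i}^{*}>0$) or $+\infty$ (non-integrable singularity, in which case $\tau_{s,i}^{*}=0$ and there is no flat piece — the trajectory asymptotically approaches $s_{i}^{*}$ but never reaches it, exactly the ``slow convergence'' phenomenon described in Section \ref{Section1.2}). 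In both cases one must check $\varphi_{0}^{*}=0$ still holds and that $H(\pi(u_{\tau}^{*}))$ is integrable near $\tau=0$ so that $S_{\pi}[\varphi^{*}]$ and $J[\varphi^{*}]$ are finite (here the regularity $\pi\in\mathcal{U}$, in particular continuity and the bound by $f$, together with $\pi(s_{i}^{*})=s_{i}^{*}\in(0,1)$ keeping $\pi$ away from $0$ and $1$ near the contact, should ensure $H(\pi(u^{*}))$ stays bounded). The case $s_{i}^{*}\in\{0,1\}$ (when $i=0$ or $i=N$ and $\pi(0)=0$ or $\pi(1)=1$) would make $H(\pi(s_{i}^{*}))=0$ automatically, so the flat piece contributes nothing regardless — but this case is excluded here since the statement concerns $s\in K_{\pi,i}$ with $1\le i\le N-1$ in the companion corollary; for general $i$ one should note $\inf K_{\pi,0}=0$, $\sup K_{\pi,N}=1$ and handle the endpoint via the convention $a_{\pi,0}=1$, $a_{\pi,N}=-1$ stated after Eq. (\ref{eq:1.4.2}).
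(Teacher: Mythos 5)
Your construction is correct and follows essentially the same route as the paper: the paper first shows (Lemma \ref{lemma 12}) that any zero-cost trajectory must solve $\dot{\varphi}_{\tau}=\pi\left(\varphi_{\tau}/\tau\right)$, then integrates $\dot{u}_{\tau}=\left[\pi\left(u_{\tau}\right)-u_{\tau}\right]/\tau$ backwards from $u_{1}=s$ to obtain exactly your implicit solution $F_{\pi}\left(s,u_{\tau}^{*}\right)=\log\left(1/\tau\right)$, with the flat piece at $s_{i}^{*}$ for $\tau\leq\tau_{s,i}^{*}$ determined by the (non)integrability of $1/\left(\pi\left(z\right)-z\right)$ at $s_{i}^{*}$. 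Your direct check that $J\left[\varphi^{*}\right]=S_{\pi}\left[\varphi^{*}\right]$ along the solution is just the equality case $x=y$ of the paper's pointwise representation $I_{\pi}\left[\varphi\right]=-\int_{0}^{1}d\tau\, L\left(\dot{\varphi}_{\tau},\pi\left(\varphi_{\tau}/\tau\right)\right)$, and your closing remark correctly identifies that the construction requires $s_{i}^{*}\in\partial C_{\pi}$ (i.e. the interior intervals $1\leq i\leq N-1$), which is also how the paper's proof separates this corollary from the negative statement for $K_{\pi,0}$, $K_{\pi,N}$ in Corollary \ref{Corollary 3}.
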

The proof relies on the fact that any $\varphi^{*}$ for which $I_{\pi}\left[\varphi^{*}\right]=0$
must satisfy the Homogeneous equation $\dot{\varphi}_{\tau}^{*}=\pi\left(\varphi_{\tau}^{*}/\tau\right)$.
This is shown in Section \ref{Section3.2.2}. 

The above corollary states that the optimal strategy to achieve the
event $\left\{ X_{n,n}=\left\lfloor sn\right\rfloor \right\} $, $s\in\left[\,\inf\, C_{\pi},\,\sup\, C_{\pi}\right]$
emanates from the closest unstable equilibrium point which is on the
left of $s$ if $\pi\left(s\right)$$<s$ and on the right if $\pi\left(s\right)>s$,
see Figure \ref{fig:2} for an example. Notice that $u_{\tau}^{*}$
is always invertible on $(\tau_{s,i}^{*},1]$, since it is strictly
decreasing from $\sup\, K_{\pi,i}$ to $s$ if $a_{\pi,i}=-1$, and
strictly increasing from $\mathrm{inf}\, K_{\pi,i}$ to $s$ if $a_{\pi,i}=1$. 

\begin{figure}
\begin{singlespace}
\centering{}~~~~~~~~~\includegraphics[scale=2.5]{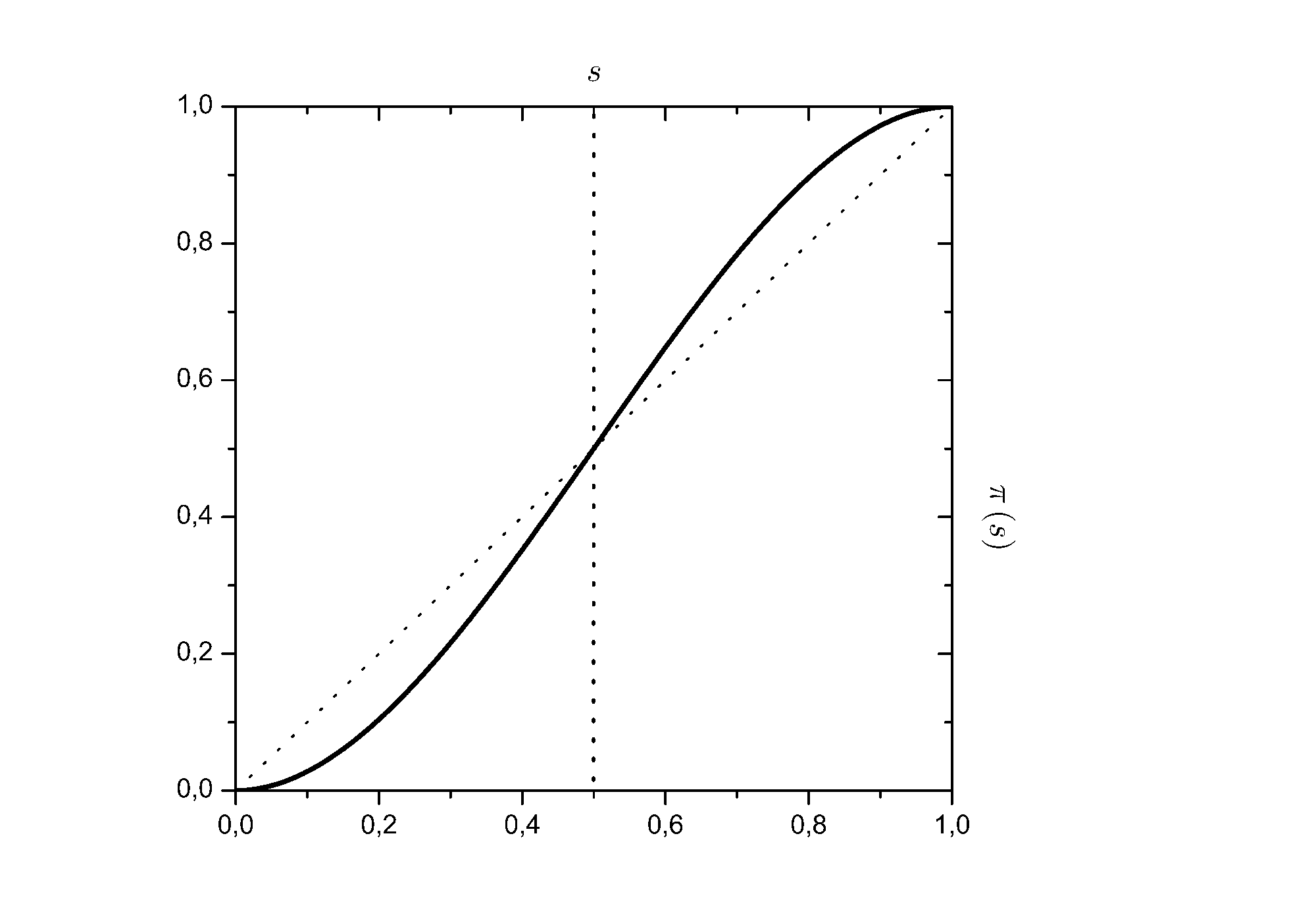}\\
\includegraphics[scale=2.63]{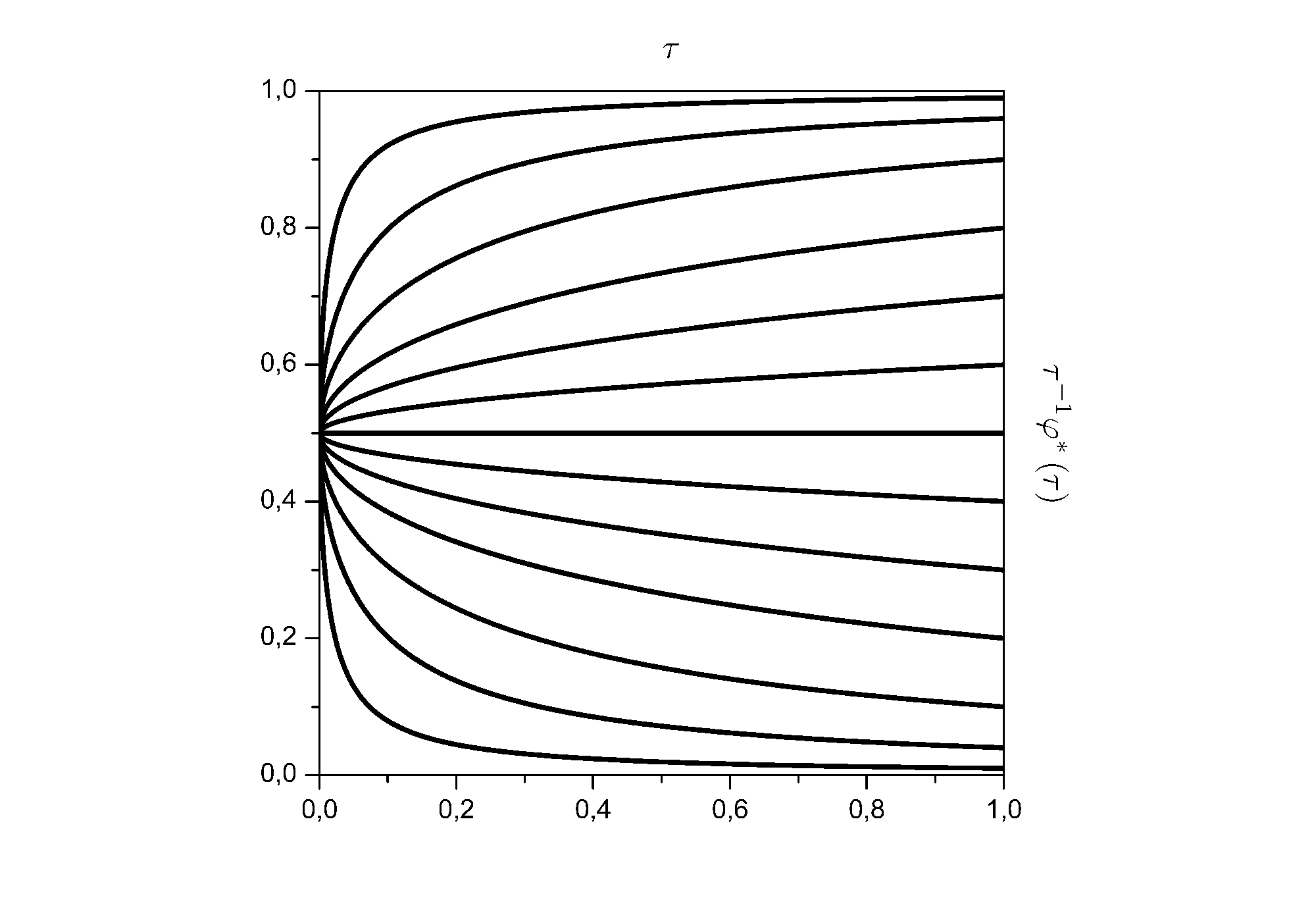}\caption{\textit{\label{fig:2}We can provide an explicit example on how to
use Corollary \ref{Corollary4} using the urn function $\pi\left(s\right)=3s^{2}-2s^{3}$,
that represents an urn process in which at each time three balls are
extracted from the urn, and then a black ball is added if there is
a majority of black balls and a white ball is added otherwise. This
urn has been first introduced by Arthur et Al. in \cite{Arthur} as
a model of market share between two competing commercial products.
We will refer to it as majority urn. Since $3s^{2}-2s^{3}=s$ has
three solutions at $0$, $1/2$ and $1$ we have $K_{\pi,1}=\left(0,1/2\right)$
and $K_{\pi,2}=\left(1/2,1\right)$, with $a_{\pi,1}=-1$ and $a_{\pi,2}=1$
. Applying Corollary \ref{Corollary4} we find that in both cases
$s\in K_{\pi,1}$ and $s\in K_{\pi,2}$ we have $\tau_{s,1}^{*}=0$,
$\tau_{s,2}^{*}=0$, and the optimal trajectory satisfies ${\textstyle 2\tau^{-1}\varphi{}_{\tau}^{*}=}1-(1\pm\varrho\left(s\right)/\tau)^{-1/2}$
, with $\varrho\left(s\right)=4s(1-s)/(2s-1)^{2}$. Notice that much
useful information can be extracted from this curves as they describe
the relative market placement of the considered product at each time
backward on a scale $O\left(n\right)$ by only asking for the final
state $s$. The lower figure shows some zero-cost trajectories of
the above $\pi$ for $s\in\left\{ 0.99,\,0.96,\,0.9,\,0.8,\,0.7,\,0.5,\,0.4,\,0.3,\,0.2,\,0.1,\,0.04,\,0.01\right\} $. }}
\end{singlespace}
\end{figure}

\subsubsection*{Time-inhomogeneous trajectories}

A curious fact is that an optimal trajectory can be time-inhomogeneous
depending on integrability of $1/\left(\pi\left(s\right)-s\right)$
as $s\rightarrow s_{i}^{*}$. If the singularity is integrable (not
the case of Figure \ref{fig:2}) then the equilibrium $s_{i}^{*}$
is so unstable that the processes will leave its neighborhood at some
$\tau_{s,i}^{*}>0$ to end in $s$. We discuss this interpretation
after stating our results for trajectories that end in $s\in\partial C_{\pi}$.
\begin{cor}
\label{Corollary4.1}Let $K_{\pi}$, $A_{\pi}$ as in Eq.s (\ref{eq:1.4.1}),
(\ref{eq:1.4.2}), and consider $K_{\pi,i}$ for some $1\leq i\leq N-1$.
Let $F_{\pi}\left(s,u\right)$ and $s_{i}^{*}$ as in Corollary \ref{Corollary4}
and define
\begin{equation}
s_{i}^{\dagger}:=\mathbb{I}_{\left\{ a_{\pi,i}=-1\right\} }\inf K_{\pi,i}+\mathbb{I}_{\left\{ a_{\pi,i}=1\right\} }\sup K_{\pi,i}.
\end{equation}
If $a_{\pi,i}=0$ the trajectory $\varphi^{*}=s_{i}^{\dagger}\tau$
is the unique zero-cost trajectory ending in \textup{$s_{i}^{\dagger}$}.
If $a_{\pi,i}\neq0$ then a family of zero-cost trajectories $\varphi^{*}\in\mathcal{Q}_{s_{i}^{\dagger}}$
with \textup{$\tau^{-1}\varphi_{\tau}^{*}\in K_{\pi,i}\cup\partial K_{\pi,i}$,
$\tau\in\left[0,1\right]$} can exist such that $I_{\pi}\left[\varphi^{*}\right]=0$.
If ${\textstyle \lim_{\, a_{\pi,i}(s_{i}^{\dagger}-s)\rightarrow0^{+}}\left|F_{\pi}\left(s,\cdot\right)\right|}=\infty$
then $\varphi_{\tau}^{*}=s_{i}^{\dagger}\tau$ is the unique zero-cost
trajectory. If ${\textstyle \lim_{\, a_{\pi,i}(s_{i}^{\dagger}-s)\rightarrow0^{+}}\left|F_{\pi}\left(s,\cdot\right)\right|}<\infty$
we define 
\begin{equation}
\theta_{i}^{*}:=\exp\left(-{\textstyle \lim_{\, a_{\pi,i}(u-s_{i}^{*})\rightarrow0^{+}}\lim_{\, a_{\pi,i}(s_{i}^{\dagger}-s)\rightarrow0^{+}}\left|F_{\pi}\left(s,u\right)\right|}\right)
\end{equation}
and the function $F_{\pi,s_{i}^{\dagger}}^{-1}$ as in Corollary \ref{Corollary4},
with $s_{i}^{\dagger}$, $\theta_{i}^{*}$ on place of $s$, $\tau_{s,i}^{*}$.
Then $\varphi_{\tau}^{*}=\tau u_{\tau}^{*}$ with 
\begin{equation}
u_{\tau}^{*}:=s_{i}^{\dagger}\mathbb{I}_{\left\{ \tau\in\left(t,1\right]\right\} }+\, F_{\pi,s_{i}^{\dagger}}^{-1}\left(\log\left(t/\tau\right)\right)\,\mathbb{I}_{\left\{ \tau\in\left(\theta_{i}^{*}t,t\right]\right\} }+s_{i}^{*}\,\mathbb{I}_{\left\{ \tau\in\left[0,\theta_{i}^{*}t\right]\right\} },
\end{equation}
is a zero-cost trajectory for any $t\in\left[0,1\right]$. Concerning
trajectories $\varphi^{*}\in\mathcal{Q}_{s_{i}^{*}}$ with \textup{$\tau^{-1}\varphi_{\tau}^{*}\in K_{\pi,i}\cup\partial K_{\pi,i}$,
$\tau\in\left[0,1\right]$,} we have that $\varphi_{\tau}^{*}=s_{i}^{*}\tau$
is the unique zero-cost trajectory.
\end{cor}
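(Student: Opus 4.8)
The plan is to reduce everything to the characterization established while proving Corollary \ref{Corollary4} (Section \ref{Section3.2.2}): since every $\varphi\in\mathcal{Q}$ is $1$-Lipschitz, hence absolutely continuous, and the integrand of $I_{\pi}$ is the Kullback--Leibler divergence of $\mathrm{Ber}\left(\dot{\varphi}_{\tau}\right)$ from $\mathrm{Ber}\left(\pi\left(\varphi_{\tau}/\tau\right)\right)$, which is nonnegative and vanishes only when the parameters coincide (with the usual conventions for the degenerate terms when $\pi\in\left\{ 0,1\right\} $), a $\varphi^{*}\in\mathcal{Q}$ has $I_{\pi}\left[\varphi^{*}\right]=0$ if and only if it solves the homogeneous equation $\dot{\varphi}_{\tau}^{*}=\pi\left(\varphi_{\tau}^{*}/\tau\right)$ for almost every $\tau$. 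Writing $u_{\tau}:=\varphi_{\tau}^{*}/\tau$ this becomes the separable equation $\tau\dot{u}_{\tau}=\pi\left(u_{\tau}\right)-u_{\tau}$, i.e.\ $du/\left(\pi\left(u\right)-u\right)=d\tau/\tau$ wherever $\pi\left(u\right)\neq u$, whose primitive on $K_{\pi,i}\cup\partial K_{\pi,i}$ is $-F_{\pi}$. Throughout I will use that on $\mathrm{int}\,K_{\pi,i}$ the quantity $\pi\left(u\right)-u$ keeps the constant nonzero sign $a_{\pi,i}$ (an interior zero would be an isolated contact, hence some $s_{j}$), so $F_{\pi}\left(s,\cdot\right)$ is a proper integral whose restriction to $\mathrm{int}\,K_{\pi,i}$ is strictly monotone and invertible, and the flow $\tau\mapsto u_{\tau}$ is monotone: for $a_{\pi,i}=1$ it moves away from the repelling fixed point $s_{i}^{*}=\inf K_{\pi,i}$ and toward the attracting fixed point $s_{i}^{\dagger}=\sup K_{\pi,i}$ as $\tau$ grows, symmetrically for $a_{\pi,i}=-1$; whether it reaches an endpoint at a finite $\tau$ amounts to convergence of the improper integral $F_{\pi}$ there, which is exactly the dichotomy in the statement, and $\pi\in\mathcal{U}$ enters only to guarantee $F_{\pi}$ exists in this improper sense.

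The case $a_{\pi,i}=0$ is then immediate: $\pi$ is the identity on $K_{\pi,i}\cup\partial K_{\pi,i}$, the homogeneous equation is $\dot{\varphi}^{*}=\varphi^{*}/\tau$ with solutions $\varphi_{\tau}^{*}=c\tau$, and the endpoint constraint together with $\tau^{-1}\varphi_{\tau}^{*}\in K_{\pi,i}\cup\partial K_{\pi,i}$ pins $c$. For $a_{\pi,i}\neq0$ I first note that $\varphi_{\tau}^{*}=s_{i}^{\dagger}\tau$ and $\varphi_{\tau}^{*}=s_{i}^{*}\tau$ are always zero-cost, because $\pi\left(s_{i}^{\dagger}\right)=s_{i}^{\dagger}$ and $\pi\left(s_{i}^{*}\right)=s_{i}^{*}$ make the constant slope solve the equation and both lie on $\partial K_{\pi,i}$. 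The uniqueness claims for trajectories ending at a fixed point follow from a single continuity-plus-monotonicity argument: if $u$ lies in $\mathrm{int}\,K_{\pi,i}$ at some $\tau'<1$, then monotonicity forces $u_{1}$ to be strictly farther from $s_{i}^{*}$, and in the non-integrable case it cannot reach $s_{i}^{\dagger}$ in finite time either, so $u_{1}\notin\left\{ s_{i}^{*},s_{i}^{\dagger}\right\} $ in that case; consequently, when $u_{1}=s_{i}^{*}$, and likewise when $u_{1}=s_{i}^{\dagger}$ with the singularity of $1/\left(\pi\left(u\right)-u\right)$ at $s_{i}^{\dagger}$ non-integrable, $u$ takes values in the two-point set $\partial K_{\pi,i}$ for all $\tau\in\left(0,1\right]$, hence (being continuous there) is constant and equal to $u_{1}$, giving $\varphi_{\tau}^{*}=s_{i}^{*}\tau$ and $\varphi_{\tau}^{*}=s_{i}^{\dagger}\tau$ respectively as the unique options.

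It remains to treat $a_{\pi,i}\neq0$, $u_{1}=s_{i}^{\dagger}$, with $1/\left(\pi\left(u\right)-u\right)$ integrable at $s_{i}^{\dagger}$, where I build the announced family by gluing: for $t\in\left[0,1\right]$ set $u_{\tau}^{*}=s_{i}^{*}$ on $\left[0,\theta_{i}^{*}t\right]$, $u_{\tau}^{*}=F_{\pi,s_{i}^{\dagger}}^{-1}\left(\log\left(t/\tau\right)\right)$ on $\left(\theta_{i}^{*}t,t\right]$, and $u_{\tau}^{*}=s_{i}^{\dagger}$ on $\left(t,1\right]$; here $\log\left(t/t\right)=0$ yields $u_{t}^{*}=s_{i}^{\dagger}$, while $\log\left(1/\theta_{i}^{*}\right)$ is precisely the limiting value of $F_{\pi,s_{i}^{\dagger}}$ at $s_{i}^{*}$, so $u_{\tau}^{*}\to s_{i}^{*}$ as $\tau\downarrow\theta_{i}^{*}t$ and the three pieces match continuously. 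Then $\varphi_{\tau}^{*}=\tau u_{\tau}^{*}$ has $\varphi_{0}^{*}=0$, is continuous, and has slope $\pi\left(u_{\tau}^{*}\right)\in\left[0,1\right]$ almost everywhere, the slopes agreeing at the two junctions by continuity of $\pi$ and the fixed-point identities; hence $\varphi^{*}\in\mathcal{Q}$ solves the homogeneous equation and $I_{\pi}\left[\varphi^{*}\right]=0$, with $t=0$ recovering $\varphi_{\tau}^{*}=s_{i}^{\dagger}\tau$. Conversely, by the same flow analysis any zero-cost $\varphi^{*}$ with $\tau^{-1}\varphi_{\tau}^{*}\in K_{\pi,i}\cup\partial K_{\pi,i}$ and $u_{1}=s_{i}^{\dagger}$ must consist of an initial constant stretch at $s_{i}^{*}$, a single monotone interior arc rigidly determined by $F_{\pi}$, and a final constant stretch at $s_{i}^{\dagger}$, which is exactly one member of the family with $t$ the first time the trajectory attaches to $s_{i}^{\dagger}$. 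The last statement, uniqueness of $\varphi_{\tau}^{*}=s_{i}^{*}\tau$ among trajectories ending at $s_{i}^{*}$ and staying in $K_{\pi,i}\cup\partial K_{\pi,i}$, is the $u_{1}=s_{i}^{*}$ instance of the argument in the previous paragraph.

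The step I expect to be the real obstacle is precisely this bookkeeping at the degenerate fixed points $s_{i}^{*},s_{i}^{\dagger}$: the homogeneous ODE loses uniqueness there exactly when the corresponding singularity of $1/\left(\pi\left(u\right)-u\right)$ is integrable, so one must simultaneously exhibit every glued solution, check that the gluing produces a genuine element of $\mathcal{Q}$ keeping $S_{\pi}$ finite — this is where the control of $\pi$ near the contacts and the conventions for the boundary entropy terms are used — and rule out solutions of any other type ending at these points, the latter resting on the monotonicity and improper-integral estimates sketched above.
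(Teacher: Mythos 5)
Your proposal is correct and follows essentially the same route as the paper: reduce zero cost to the homogeneous equation $\dot{\varphi}_{\tau}^{*}=\pi\left(\varphi_{\tau}^{*}/\tau\right)$ (the paper's Lemma \ref{lemma 12}), pass to $u_{\tau}=\varphi_{\tau}^{*}/\tau$ and the separable ODE with primitive $F_{\pi}$, and settle uniqueness versus the glued $t$-parametrized family by the integrability dichotomy of $1/\left(\pi\left(u\right)-u\right)$ at $s_{i}^{\dagger}$ and $s_{i}^{*}$. The monotone-flow uniqueness argument and the matching at $\theta_{i}^{*}t$ are exactly the paper's mechanism, so no further comparison is needed.
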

As we can see, the set of zero-cost trajectories that end in a stable
equilibrium point can be degenerate. Again, this depends only on the
integrability of the singular behavior of $1/\left(\pi\left(s\right)-s\right)$
for $s\rightarrow s_{i}^{\dagger}$: if 
\begin{equation}
{\textstyle \lim_{\, a_{\pi,i}(s_{i}^{\dagger}-s)\rightarrow0^{+}}\left|F_{\pi}\left(s,\cdot\right)\right|}=\infty
\end{equation}
the trajectory is simply $\varphi^{*}=s_{i}^{\dagger}\tau$ and it
is unique. If instead 
\begin{equation}
{\textstyle \lim_{\, a_{\pi,i}(s_{i}^{\dagger}-s)\rightarrow0^{+}}\left|F_{\pi}\left(s,\cdot\right)\right|}<\infty
\end{equation}
then we have a family of time-inhomogeneous trajectories, parametrized
by the time $t$ at which they hit $s_{i}^{\dagger}$, that emanates
from the unstable equilibrium $s_{i}^{*}$ on the other side of $K_{\pi,i}$.
Moreover, if $s_{i}^{\dagger}$ is a downcrossing then $s_{i}^{\dagger}=\inf K_{\pi,i}=\sup K_{\pi,i-1}$
with $a_{\pi,i}=-1$, $a_{\pi,i-1}=1$, so that optimal trajectories
ending in $s_{i}^{\dagger}$ can emanate also from $\inf K_{\pi,i-1}$.
Notice that if $1/\left(\pi\left(s\right)-s\right)$ is integrable
also for $s\rightarrow s_{i}^{*}$ then the $\theta_{i}^{*}>0$ and
our optimal trajectories would be doubly time-inhomogeneous, emanating
from $s_{i}^{*}$ at some $\tau=\theta_{i}^{*}t$ and hitting $s_{i}^{\dagger}$
at $\tau=t$. More explicitly, integrability in the neighborhood of
an unstable equilibrium point (like an integrable upcrossing) make
it so unstable that the probability mass is expelled form its neighborhood
on a time scale $O\left(n\right)$, and makes it convenient to use
a time-inhomogeneous strategy. The inverse picture arises for integrable
stable points, for example an integrable downcrossings, where the
process is so attracted that it becomes entropically convenient to
hit the equilibrium point in a finite fraction $t\in\left[0,1\right)$
of the whole time span (of order $O\left(n\right)$), instead of approaching
it asymptotically (an example is in Figure \ref{fig:3}).

\begin{figure}
\begin{singlespace}
\centering{}\includegraphics[scale=2.63]{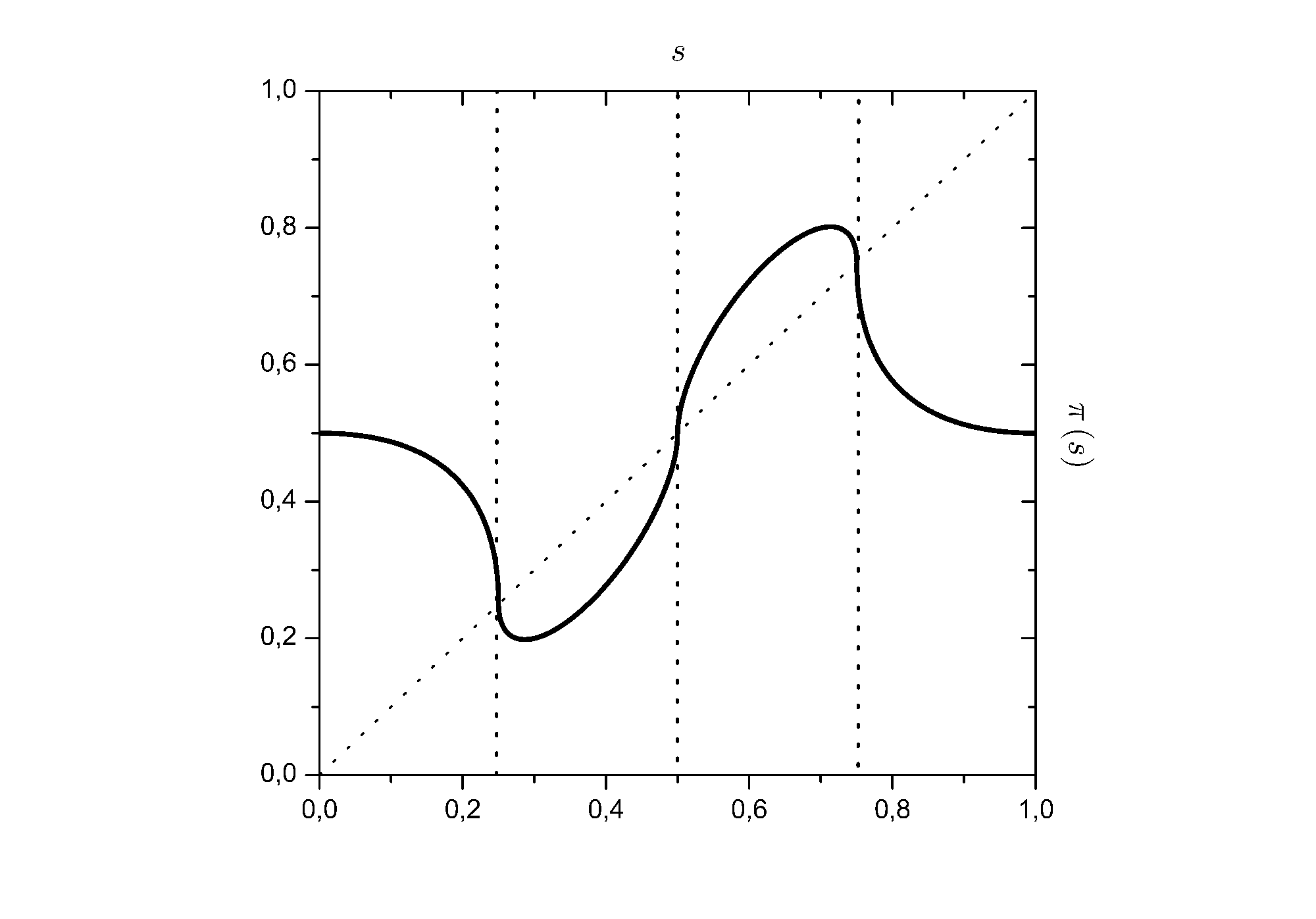}\\
~~~~~~~~~~\includegraphics[scale=2.63]{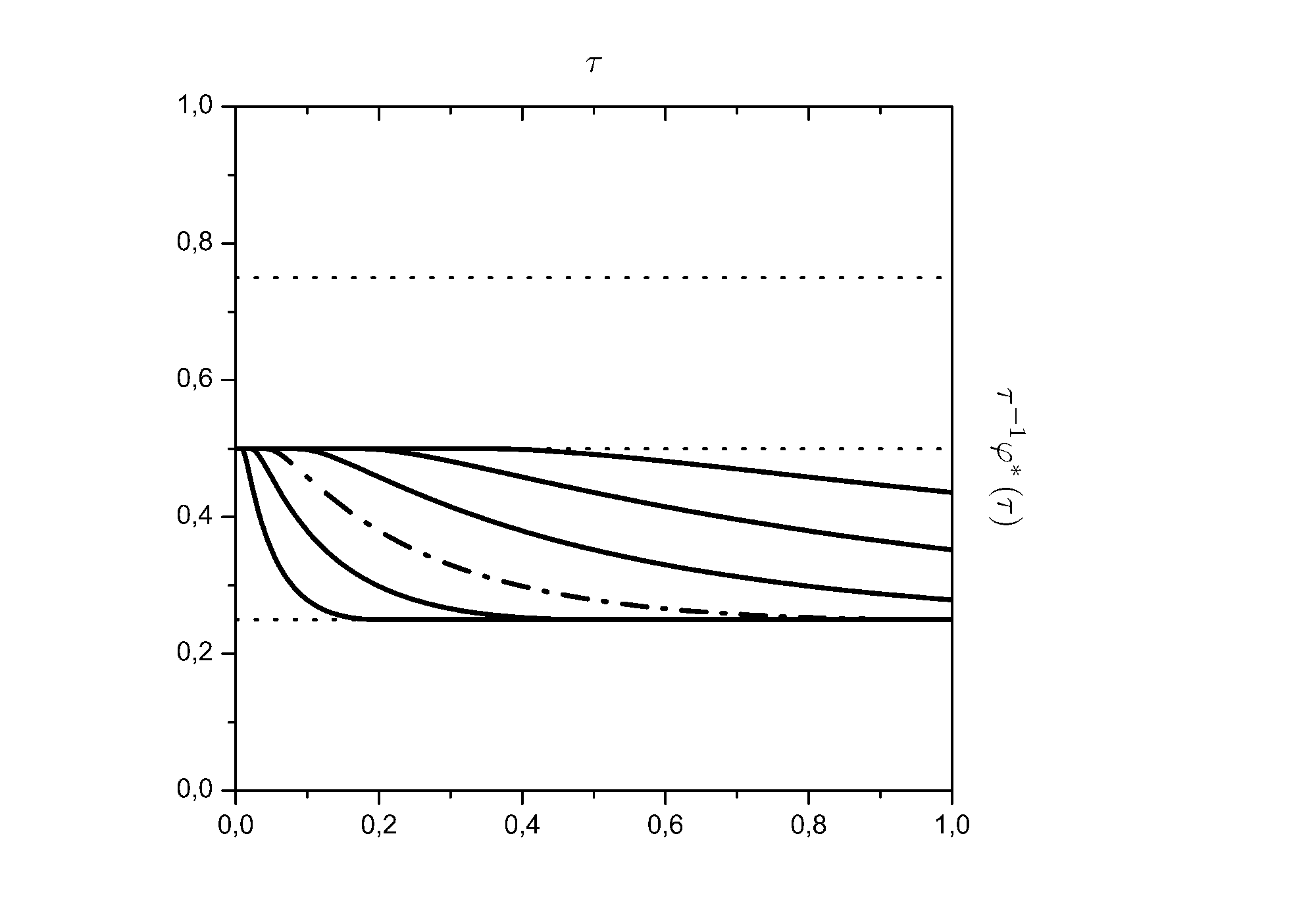}\caption{\label{fig:3}\textit{In the above figures we give an example to show
the effects of integrability on stable and unstable points. Consider
the urn function $\pi\left(s\right)=s+\mathbb{I}_{\left\{ s\in\left[01/4\right]\right\} }(1/4-s)^{1/2}-\mathbb{I}_{\left\{ s\in\left(1/4,1/2\right]\right\} }(1/4-s)^{1/2}(s-1/2){}^{1/2}+\,\mathbb{I}_{\left\{ s\in\left(1/2,3/4\right]\right\} }(1/2-s)^{1/2}(s-3/4)^{1/2}-\mathbb{I}_{\left\{ s\in\left(3/4,1\right]\right\} }(s-3/4)^{1/2}$
in the interval $s\in\left[1/4,1/2\right]$, then we have $F_{\pi}\left(s,u\right)=\left[2\arcsin\left(\sqrt{4z-1}\right)\right]_{u}^{s}$
and $\theta_{i}^{*}=\exp(-\pi)$. By Corollary \ref{Corollary4.1}
we find that the family of trajectories ending in $s_{i}^{\dagger}=1/2$
is ${\scriptstyle {\textstyle 4\tau^{-1}\varphi_{\tau}^{*}=\mathbb{I}_{\left\{ \tau\in\left[1,t\right]\right\} }+2\,\mathbb{I}_{\left\{ \tau\in\left[0,e^{-\pi}t\right]\right\} }}+{\textstyle \left[1+\sin^{2}\left(\frac{1}{2}\log\left(t/\tau\right)\right)\right]\mathbb{I}_{\left\{ \tau\in\left[e^{-\pi}t,t\right]\right\} }}}$
for $t\in\left[0,1\right]$, while for each $s\in\left(1/4,1/2\right]$
we have $\tau_{s,i}^{*}=\exp[2\arcsin\left(\sqrt{4s-1}\right)-\pi]$
and $4\tau^{-1}\varphi_{\tau}^{*}=[1+\sin^{2}(\frac{1}{2}\log(\tau_{s,i}^{*}/\tau))]\,\mathbb{I}_{\{\tau\in(\tau_{s,i}^{*},1]\}}+2\,\mathbb{I}_{\{\tau\in[0,\tau_{s,i}^{*}]\}},$
with $\lim_{s\rightarrow s_{i}^{\dagger}}\tau_{s,i}^{*}=\theta_{i}^{*}$
as expected. The urn function and some zero-cost trajectories in $K_{\pi,1}\cup\partial K_{\pi,1}=\left[1/4,\,1/2\right]$
are shown in lower figure, with $s=\frac{1}{4}\left[1+\sin^{2}\left(\frac{1}{2}\log\left(k\right)\right)\right]$,
$k\in\left\{ 2,\,4,\,8\right\} $ and with $t=\left\{ 1/8,\,1/2,\,1\right\} $.
The dash-dotted line is the critical trajectory with $t=1$.}}
\end{singlespace}
\end{figure}

It is an interesting result that no trajectory with $\lim_{\tau\rightarrow0}\left(\varphi_{\tau}/\tau\right)\notin\partial C_{\pi}$
can be optimal if $a_{\pi,i}\neq0$, not even if we chose $\varphi_{1}$
to be in a set of stable equilibrium like downcrossings (ie, $\varphi_{1}\in C_{\pi}\left(+,-\right)$).
We can interpret this result in terms of time spent in a given state:
it seems that a process starting with initial conditions $m^{-1}X_{n,m}\notin\partial C_{\pi}$
concentrates its mass in the neighborhood of the points of convergence
in times that are of order $o\left(n\right)$, and only those that
are in the neighborhood of unstable points can remain there for times
$O\left(n\right)$, eventually reaching the stable points according
to the mechanism suggested by Corollaries \ref{Corollary4} and \ref{Corollary4.1}.

\subsubsection{A comment on moderate deviations}

The above formulas for optimal trajectories are of particular interest,
since represent a first step to deal with the much richer problem
of \textit{moderate deviations}, ie, to compute limits of the kind
\begin{equation}
\phi_{\left\{ \sigma_{n}\right\} }\left(s_{1},s_{2}\right)=\lim_{n\rightarrow\infty}\sigma{}_{n}^{-1}\log\mathbb{P}\left(n^{-1}X_{n,n}\in\left(s_{1},s_{2}\right)\right)
\end{equation}
for some $\sigma_{n}=o\left(n\right)$, $s\in\left[\,\inf\, C_{\pi},\,\sup\, C_{\pi}\right]$.
To illustrate how this can be obtained we provide the following argument.
Let $u_{\tau,s}^{*}$ be an optimal trajectory ending in $s\in K_{\pi,i}$
(ie, $u_{1,s}^{*}=s$). Since any finite deviation from this trajectory
has an exponential cost on a time scale $O\left(n\right)$, the probability
mass current can move along these trajectories only. Moreover, Corollaries
\ref{Corollary4} and \ref{Corollary4.1} guarantee uniqueness of
the solutions and $u_{\tau,s_{1}}^{*}<u_{\tau,s}^{*}<u_{\tau,s_{2}}^{*}$
for any $\tau\in\left(0,1\right]$ and $s_{1}<s<s_{2}$. Hence, we
find that the probability current passing through $\left(u_{\tau,s_{1}}^{*},u_{\tau,s_{2}}^{*}\right)$
is constant for $\tau>0$,
\begin{equation}
\mathbb{P}\left(x_{n,\tau n}\in\left(u_{\tau,s_{1}}^{*},u_{\tau,s_{2}}^{*}\right)\right)=\mathbb{P}\left(x_{n,n}\in\left(s_{1},s_{2}\right)\right),\,\tau\in\left(0,1\right].
\end{equation}
Then, let $\tau\left(s_{i},\epsilon\right)$ such that $u_{\tau\left(s_{i},\epsilon\right)}^{*}=s_{i}+\epsilon$
and let consider the case $\tau\left(s_{1},\epsilon\right)>\tau\left(s_{2},\epsilon\right)$
for $s_{1}<s_{2}$. Corollaries \ref{Corollary4} and \ref{Corollary4.1}
also guarantee invertibility of the zero-cost trajectories, then we
can write 
\begin{equation}
\mathbb{P}\left(x_{n,n}\in\left(s_{1},s_{2}\right)\right)=\mathbb{P}\left(x_{n,\tau\left(s_{1},\epsilon\right)n}-s_{i}^{*}<\epsilon\right)-\mathbb{P}\left(x_{n,\tau\left(s_{2},\epsilon\right)n}-s_{i}^{*}<\epsilon\right).
\end{equation}
Given that $\mathbb{P}\left(x_{n,k}\in\left(\alpha,\beta\right)\right)=\mathbb{P}\left(x_{k,k}\in\left(\alpha,\beta\right)\right)$,
the problem of computing $\mathbb{P}\left(x_{n,n}\in\left(s_{1},s_{2}\right)\right)$
is reduced to that of computing $\mathbb{P}\left(x_{n,n}-s_{i}^{*}<\epsilon\right)$
for some arbitrary small $\epsilon>0$. For $\left(s_{1},s_{2}\right)\subseteq K_{\pi,i}$
a martingale analysis suggests the conjecture that $\phi_{\left\{ n^{\nu}\right\} }\left(s_{1},s_{2}\right)=0$
for any $\nu\in\left(0,1\right)$, and that $\phi_{\left\{ \log n\right\} }\left(s_{1},s_{2}\right)=1-\left[\partial_{s}\pi\left(s\right)\right]_{s=s_{i}^{*}}$.

\subsection{Cumulant Generating Function.}

\label{Section2.3}Except the fact that $\phi\left(s\right)<0$, for
$s\in\left[z_{-}^{*},\inf C_{\pi}\right)$ or $s\in\left(\sup\, C_{\pi},z_{+}^{*}\right]$
we couldn't extract more informations on the shape of $\phi\left(s\right)$
from its variational representation, because in these cases the variational
problem can't be simplified by Lemma \ref{lemma 12}, see Section
\ref{Section3.2.2}. Anyway, the existence of $\phi$ proved in Theorem
\ref{Theorem 2} introduces some critical simplifications that allows
to approach the problem using analysis, provided that $\pi$ obeys
to some additional regularity conditions. For example, we can prove
the convexity of $-\phi\left(s\right)$, $s\in\left[z_{-}^{*},\inf C_{\pi}\right)$,
or $s\in\left(\sup\, C_{\pi},z_{+}^{*}\right]$ in case $\pi$ is
invertible on the same intervals and the inverse functions
\begin{equation}
\pi_{-}^{-1}:\,\left[\pi\left(z_{-}^{*}\right),\pi(\inf C_{\pi})\right)\rightarrow\,\left[\, z_{-}^{*},\,\inf C_{\pi}\right),
\end{equation}
\begin{equation}
\pi_{+}^{-1}:\left(\pi(\sup\, C_{\pi}),\pi\left(z_{+}^{*}\right)\right]\rightarrow\left(\sup\, C_{\pi},z_{+}^{*}\right],
\end{equation}
are absolutely continuous Lipschitz functions. Such result can be
obtained by analyzing the scaling of the Cumulant Generating Function
(CGF) 
\begin{equation}
\psi\left(\lambda\right):=\lim_{n\rightarrow\infty}n^{-1}\log\mathbb{E}\left(e^{\lambda X_{n,n}}\right),\ \lambda\in\left(-\infty,\infty\right).\label{eq:1.18}
\end{equation}
First, notice that Theorem \ref{Theorem 2} implies that $\psi$ is
well defined \cite{Dembo Zeitouni}. Then, let $-\hat{\phi}\left(s\right)=\mathrm{conv\left(-\phi\left(s\right)\right)}$
be the convex envelope of $-\phi\left(s\right)$ for $s\in\left[0,1\right]$.
By Theorem \ref{Theorem 2} and Corollary \ref{Corollary 3} it follows
that $\hat{\phi}\left(s\right)=0$ when $s\in\left[\,\inf\, C_{\pi},\,\sup\, C_{\pi}\right]$
and $\hat{\phi}\left(s\right)<0$ otherwise. In addition, it holds
that
\begin{defn}
\textit{Let $\hat{\phi}_{-}:\left[z_{-}^{*},\inf C_{\pi}\right)\rightarrow\left(-\infty,0\right]$,
$\hat{\phi}_{+}:\left(\sup\, C_{\pi},z_{+}^{*}\right]\rightarrow\left(-\infty,0\right]$
such that $\hat{\phi}\left(s\right)=\hat{\phi}_{-}\left(s\right)$
when $s\in\left[z_{-}^{*},\inf C_{\pi}\right)$ and $\hat{\phi}\left(s\right)=\hat{\phi}_{+}\left(s\right)$
when $s\in\left(\sup\, C_{\pi},z_{+}^{*}\right]$. Also define $\psi_{-}:\left(-\infty,0\right]\rightarrow\left(-\infty,0\right]$,
$\psi_{+}:\left[0,\infty\right)\rightarrow\left[0,\infty\right)$
such that $\psi\left(\lambda\right)=\psi_{-}\left(\lambda\right)$
when $\lambda\in\left(-\infty,0\right]$ and $\psi\left(\lambda\right)=\psi_{+}\left(\lambda\right)$
when $\lambda\in\left[0,\infty\right)$. One can show that $-\hat{\phi}_{-}$
and $-\hat{\phi}_{+}$ are the Frenchel-Legendre transforms of $-\psi_{-}$
and $-\psi_{+}$ respectively: 
\begin{equation}
\hat{\phi}_{-}\left(s\right)=\inf_{\lambda\in\left(-\infty,0\right]}\left\{ \lambda s+\psi_{-}\left(\lambda\right)\right\} ,\:\hat{\phi}_{+}\left(s\right)=\inf_{\lambda\in\left[0,\infty\right)}\left\{ \lambda s+\psi_{+}\left(\lambda\right)\right\} .
\end{equation}
}
\end{defn}
Since the existence of $-\hat{\phi}$ implies the existence of $\psi$
for every $\pi\in\mathcal{U}$, while its convexity ensures that $\psi\in\mathcal{AC}$,
we have enough informations to approach $\psi$ by analytic methods.
Here we show that the Cumulant Generating Function satisfies the non-linear
implicit ODE, 
\begin{equation}
\pi\left(\partial_{\lambda}\psi\left(\lambda\right)\right)={\textstyle \frac{e^{\psi\left(\lambda\right)}-1}{e^{\lambda}-1}},\label{eq:1.KKK}
\end{equation}
for any $\lambda$. We stress that the CGF satisfies the above equation
for all $\pi\in\mathcal{U}$, but any information would be hard to
be extracted if $\pi$ is not invertible at least on $\left[z_{-}^{*},\inf C_{\pi}\right)$
and $\left(\sup\, C_{\pi},z_{+}^{*}\right]$. If this is the case,
then the following theorem provides the Cauchy problems for $\psi_{-}$
and $\psi_{+}$:
\begin{thm}
\label{Theorem 4}Let $\pi\in\mathcal{U}$ be invertible on $\left[z_{-}^{*},\inf C_{\pi}\right)$,
and denote by $\pi_{-}^{-1}:\left[\pi\left(z_{-}^{*}\right),\pi(\inf C_{\pi})\right)\rightarrow\left[z_{-}^{*},\inf C_{\pi}\right)$
its inverse. If $\pi_{-}^{-1}$ is $\mathcal{AC}$ and Lipschitz,
then for $\lambda\in\left(-\infty,0\right)$ we have $\psi\left(\lambda\right)=\psi_{-}\left(\lambda\right)$,
with $\psi_{-}\left(\lambda\right)$ solution to the Cauchy problem
\begin{equation}
\partial_{\lambda}\psi_{-}\left(\lambda\right)=\pi_{-}^{-1}\left({\textstyle \frac{e^{\psi_{-}\left(\lambda\right)}-1}{e^{\lambda}-1}}\right),\,\lim_{\lambda\rightarrow0^{-}}\,\partial_{\lambda}\psi_{-}\left(\lambda\right)=\pi_{+}\left(\inf\, C_{\pi}\right),\,\lim_{\lambda\rightarrow-\infty}\,\partial_{\lambda}\psi_{-}\left(\lambda\right)=z_{-}^{*},\label{eq:1.21}
\end{equation}
Let $\pi$ be invertible on $\left(\sup\, C_{\pi},z_{+}^{*}\right]$,
with $\pi_{+}^{-1}:\left(\pi(\sup\, C_{\pi}),\pi\left(z_{+}^{*}\right)\right]\rightarrow\left(\sup\, C_{\pi},z_{+}^{*}\right]$
its inverse function. If $\pi_{+}^{-1}$ is $\mathcal{AC}$ and Lipschitz,
then for $\lambda\in\left(0,\infty\right)$ we have $\psi\left(\lambda\right)=\psi_{+}\left(\lambda\right)$,
with $\psi_{+}\left(\lambda\right)$ solution to the Cauchy problem
\begin{equation}
\partial_{\lambda}\psi_{+}\left(\lambda\right)=\pi_{+}^{-1}\left({\textstyle \frac{e^{\psi_{+}\left(\lambda\right)}-1}{e^{\lambda}-1}}\right),\,\lim_{\lambda\rightarrow0^{+}}\,\partial_{\lambda}\psi_{+}\left(\lambda\right)=\pi_{+}\left(\sup\, C_{\pi}\right),\,\lim_{\lambda\rightarrow\infty}\,\partial_{\lambda}\psi_{+}\left(\lambda\right)=z_{+}^{*}.\label{eq:1.22}
\end{equation}
A unique global solution exists for both Cauchy problems (\ref{eq:1.21}),
(\ref{eq:1.22}), it is $\mathcal{AC}$ and has continuous first derivative.
\end{thm}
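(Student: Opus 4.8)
The plan is to prove this in two stages: first establish the implicit relation Eq.~(\ref{eq:1.KKK}), which holds for every $\pi\in\mathcal{U}$, and then, under the invertibility hypothesis, read off the Cauchy problems Eq.s~(\ref{eq:1.21})--(\ref{eq:1.22}) and settle existence, uniqueness and regularity of their solutions by ODE theory.

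For the first stage I would exploit the self-similarity of the urn together with an exponential change of measure. Set $L_{n}(\lambda):=\mathbb{E}(e^{\lambda X_{n,n}})$; by Theorem~\ref{Theorem 2} and \cite{Dembo Zeitouni} the limit $n^{-1}\log L_{n}(\lambda)\to\psi(\lambda)$ exists. Since the urn evolution up to time $n-1$ is independent of the horizon, $X_{n,n-1}$ has the same law as $X_{n-1,n-1}$; conditioning on the last increment and using $\pi e^{\lambda}+\bar{\pi}=1+\pi(e^{\lambda}-1)$ gives the exact identity
\begin{equation}
L_{n}(\lambda)=L_{n-1}(\lambda)+\left(e^{\lambda}-1\right)\,\mathbb{E}\!\left[\pi\!\left(x_{n-1,n-1}\right)e^{\lambda X_{n-1,n-1}}\right].
\end{equation}
Dividing by $L_{n-1}(\lambda)$, the correction becomes $(e^{\lambda}-1)\,\mathbb{E}_{\mathbb{Q}_{n-1}}[\pi(x_{n-1,n-1})]$, where $\mathbb{Q}_{n-1}$ is the law of the urn at time $n-1$ tilted by $e^{\lambda X_{n-1,n-1}}$. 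The crucial input is that, because by Corollary~\ref{Corollary 3} and Theorem~\ref{Theorem 2} the rate function $-\hat{\phi}$ is convex with $\hat{\phi}=0$ exactly on $[\inf C_{\pi},\sup C_{\pi}]$, the tilted law of $x_{n-1,n-1}$ concentrates at $s^{*}(\lambda):=\partial_{\lambda}\psi(\lambda)$, whence, by bounded continuity of $\pi$, $L_{n}(\lambda)/L_{n-1}(\lambda)\to1+(e^{\lambda}-1)\pi(s^{*}(\lambda))$. Since this consecutive ratio now converges, a Ces\`aro (Stolz) argument applied to $n^{-1}\log L_{n}=n^{-1}\sum_{k=1}^{n-1}\log\big(L_{k+1}(\lambda)/L_{k}(\lambda)\big)+o(1)$ forces its limit to equal $e^{\psi(\lambda)}$, and rearranging $e^{\psi(\lambda)}=1+(e^{\lambda}-1)\pi(\partial_{\lambda}\psi(\lambda))$ yields Eq.~(\ref{eq:1.KKK}). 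I expect this to be the main obstacle: one must justify the concentration of the tilt --- in particular that the maximizer of $\lambda s+\hat{\phi}(s)$ is unique and an exposed point, so that the genuine large deviations rate $-\phi$ (not just its convex hull) governs the tilted mass --- and the identity $s^{*}(\lambda)=\partial_{\lambda}\psi(\lambda)$, which requires $\psi$ to be differentiable at $\lambda$ (equivalently $-\hat{\phi}$ strictly convex there). It is precisely here that the $\mathcal{AC}$/Lipschitz regularity assumed of $\pi_{-}^{-1}$, $\pi_{+}^{-1}$ must be fed back, via a bootstrap: once Eq.~(\ref{eq:1.KKK}) is known even on the full-measure set where $\psi$ is differentiable, the ODE structure below upgrades $\psi$ to class $C^{1}$ on $(-\infty,0)$ and on $(0,\infty)$.

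For the second stage, fix $\lambda\in(-\infty,0)$. Convexity makes $\partial_{\lambda}\psi$ nondecreasing, and from the Legendre relation $\psi(\lambda)=\sup_{s}\{\lambda s+\hat{\phi}(s)\}$ together with Corollary~\ref{Corollary 3} the subdifferential of $\psi$ at $0$ is $[\inf C_{\pi},\sup C_{\pi}]$ and $\lim_{\lambda\to-\infty}\partial_{\lambda}\psi(\lambda)$ is the left endpoint $z_{-}^{*}$ of the domain of $-\hat{\phi}$; this gives the boundary values $\lim_{\lambda\to0^{-}}\partial_{\lambda}\psi(\lambda)=\inf C_{\pi}=\pi(\inf C_{\pi})$ and $\lim_{\lambda\to-\infty}\partial_{\lambda}\psi(\lambda)=z_{-}^{*}$. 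Moreover Eq.~(\ref{eq:1.KKK}) itself rules out $\partial_{\lambda}\psi$ staying at $\inf C_{\pi}$ on any interval of negative $\lambda$: that would force $\psi(\lambda)=\lambda\inf C_{\pi}$ there, which violates the strict convexity estimate $e^{a\lambda}<a e^{\lambda}+(1-a)$ valid for $a=\inf C_{\pi}\in(0,1)$ and $\lambda\neq0$. Hence $\partial_{\lambda}\psi(\lambda)\in[z_{-}^{*},\inf C_{\pi})$ for all $\lambda<0$, so $\pi_{-}^{-1}$ may be applied to Eq.~(\ref{eq:1.KKK}) to obtain the ODE in Eq.~(\ref{eq:1.21}) with the boundary conditions just found; the argument for $\psi_{+}$ and Eq.~(\ref{eq:1.22}) is symmetric.

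Finally, for the well-posedness claim: for fixed $\lambda<0$ the right-hand side $G(\lambda,y):=\pi_{-}^{-1}\!\left(\frac{e^{y}-1}{e^{\lambda}-1}\right)$ is Lipschitz in $y$ on the relevant range (a Lipschitz map composed with a smooth one) and jointly continuous, so Picard--Lindel\"of gives local existence and uniqueness; any solution has $\partial_{\lambda}\psi_{-}=G\in[z_{-}^{*},\inf C_{\pi})$ bounded, hence grows at most linearly and extends to all of $(-\infty,0)$ --- global existence with no blow-up. Uniqueness of the solution meeting the prescribed asymptotics follows from a Gr\"onwall estimate as one approaches the two singular endpoints, namely $\lambda=0$ --- where $G$ is a removable $0/0$ whose argument tends to the boundary point $\pi(\inf C_{\pi})$ of the domain of $\pi_{-}^{-1}$ --- and $\lambda\to-\infty$, combined with interior Picard uniqueness. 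Regularity is then immediate: $\partial_{\lambda}\psi_{-}=G(\lambda,\psi_{-}(\lambda))$ is continuous, so $\psi_{-}\in C^{1}$, and $\psi_{-}$, being the restriction of the convex function $\psi$, is locally Lipschitz and therefore $\mathcal{AC}$; the same holds for $\psi_{+}$.
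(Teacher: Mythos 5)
Your first two stages essentially retrace the paper's own proof. Your exact identity for $L_{n}(\lambda)$ is the recursion in Eq. (\ref{eq:3.3}); your Ces\`aro/Stolz step plays the role of Lemma \ref{Lemma14} (a bounded sequence whose rescaled increments converge must have them converge to zero), and both routes reach Eq. (\ref{eq:1.KKK}) modulo the same concentration statement $\lim_{n}\mathbb{E}_{\lambda}\left[\pi\left(x_{n,n}\right)\right]=\pi\left(\partial_{\lambda}\psi\left(\lambda\right)\right)$, which you in fact treat more explicitly than the paper (the paper invokes weak convergence; your a.e.-differentiability bootstrap through the convexity of $\psi$ is a reasonable way to make that honest). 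The inversion of $\pi$ on $\left[z_{-}^{*},\inf C_{\pi}\right)$ and $\left(\sup C_{\pi},z_{+}^{*}\right]$, the identification of the boundary values by convex duality with $\hat{\phi}$, and the interior Picard--Lindel\"of step with global extension by boundedness of the right-hand side all match Section \ref{Section3.3.1}.

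The genuine gap is in the uniqueness part of the well-posedness claim, which is part of the statement of Theorem \ref{Theorem 4}. The data in (\ref{eq:1.21})--(\ref{eq:1.22}) are prescribed only at the singular endpoints ($\lambda\rightarrow0^{\mp}$ and $\lambda\rightarrow\mp\infty$), where Picard--Lindel\"of does not apply, and your proposed ``Gr\"onwall estimate as one approaches the two singular endpoints'' does not close: near $\lambda=0$ the Lipschitz constant of $y\mapsto\pi_{\pm}^{-1}\bigl(\left(e^{y}-1\right)/\left(e^{\lambda}-1\right)\bigr)$ blows up like $1/\left|\lambda\right|$, so no Gr\"onwall bound is available there, while at $\lambda\rightarrow\pm\infty$ the prescribed datum constrains only the limit of $\partial_{\lambda}\psi_{\pm}$, not the value of the solution, so a Gr\"onwall inequality cannot by itself exclude two distinct trajectories sharing that derivative limit. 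The paper resolves exactly this point by a comparison argument that uses both asymptotic conditions at once: since $\pi_{\pm}^{-1}$ is strictly monotone, two solutions that differ at some interior $\epsilon$ have a derivative gap controlled from below by the gap of the solutions, and this forces their derivative limits at $\pm\infty$ to disagree, contradicting the second boundary condition (existence being furnished by $\psi$ itself, which satisfies the ODE and both limits). You need an argument of this monotone-separation type, or some other mechanism that genuinely exploits the two endpoint conditions; as written, the ``unique global solution'' claim is not established.
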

Although the above result is obtained for urn functions belonging
to a subset of $\mathcal{U}$ we consider it of special importance
from the applicative side as it allows to explicitly compute $\phi$
(at least numerically) in those intervals of $s$ where $\phi$ is
nontrivial, thus providing a substantial improvement of Corollary
\ref{Corollary 3}. 

Another trivial but potentially useful application is the inverse
problem of deciding weather a given function $f$ can be the rate
function of some urn process. Since $-\hat{\phi}\left(s\right)$ is
convex by definition, then $\psi\left(\lambda\right)=\hat{\phi}\left(\partial_{\lambda}\psi\right)-\lambda\partial_{\lambda}\psi$,
from which follows that $\lambda\left(s\right)=-\partial_{s}\hat{\phi}\left(s\right)$
and $\psi\left(\lambda\left(s\right)\right)=\hat{\phi}\left(s\right)-s\partial_{s}\hat{\phi}\left(s\right)$.
If $-\phi$ is convex, then obviously $\phi=\hat{\phi}$ and we can
state the following corollary:
\begin{cor}
Let $f:\left[0,1\right]\rightarrow\left(-\infty,0\right]$ be a bounded
and concave $\mathcal{AC}$ function, and define the function $\pi_{f}$
as follows: 
\begin{equation}
\pi_{f}\left(s\right)={\textstyle {\displaystyle \frac{e^{f\left(s\right)-s\,\partial_{s}f\left(s\right)}-1}{e^{-\partial_{s}f\left(s\right)}-1}}},\, s\in\left[0,1\right].\label{eq:1.KKK-1}
\end{equation}
If the function $f$ is such that $\pi_{f}\in\mathcal{U}$ and $f\left(0\right)=\log(1-\pi_{f}\left(0\right))$,
$f\left(1\right)=\log(\pi_{f}\left(1\right))$ then the limit $\phi$
defined in Eq. (\ref{eq:1.15}) for an urn process with urn function
$\pi_{f}$ is $\phi=f$.
\end{cor}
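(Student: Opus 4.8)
The plan is to run the construction of Section~\ref{Section2.3} backwards. Suppose, for orientation, that $\phi=f$ holds: since $f$ is concave, $-\phi=-f$ is convex, so $\phi$ coincides with its convex envelope $\hat\phi$, and the relations displayed just before the statement give $\lambda(s)=-\partial_s f(s)$ and $\psi(\lambda(s))=f(s)-s\,\partial_s f(s)$; feeding this parametrisation into the implicit ODE~(\ref{eq:1.KKK}) produces exactly the right-hand side of~(\ref{eq:1.KKK-1}). Thus $\pi_f$ is reverse-engineered so that $f$ is the only possible rate function for it, and the corollary asserts that this candidate is actually attained. Accordingly I would (i) build the candidate cumulant generating function as the Legendre--Fenchel conjugate of $f$; (ii) verify it solves the Cauchy problems of Theorem~\ref{Theorem 4} with $\pi=\pi_f$; (iii) use the uniqueness in that theorem to identify it with the true $\psi$; and (iv) read off $\phi=f$.

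For (i)--(ii), set $\psi_f(\lambda):=\sup_{s\in[0,1]}\{\lambda s+f(s)\}$. Since $f$ is bounded, concave and absolutely continuous, $\psi_f$ is finite, convex, and --- for strictly concave $f$ --- continuously differentiable, with $\partial_\lambda\psi_f(\lambda)=s^{*}(\lambda)$ the maximiser, which satisfies $\partial_s f(s^{*})=-\lambda$ and $\psi_f(\lambda)=f(s^{*})-s^{*}\partial_s f(s^{*})$. Substituting these two identities into the definition~(\ref{eq:1.KKK-1}) of $\pi_f$ yields $\pi_f(\partial_\lambda\psi_f(\lambda))=(e^{\psi_f(\lambda)}-1)/(e^{\lambda}-1)$ identically in $\lambda$, that is, $\psi_f$ satisfies~(\ref{eq:1.KKK}) with $\pi=\pi_f$; this is the one-line heart of the matter, and everything else is bookkeeping on the structural constants. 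From $f(0)=\log(1-\pi_f(0))$, $f(1)=\log\pi_f(1)$ and the boundedness of $f$ one gets $\pi_f(0)=1-e^{f(0)}<1$ and $\pi_f(1)=e^{f(1)}>0$, hence $z_-^{*}=0$ and $z_+^{*}=1$. Writing $\mu=-\partial_s f(s)$, the contact equation $\pi_f(s)=s$ rewrites as $f(s)+s\mu=\log(1-s+se^{\mu})$; since $\log(1-s+se^{\mu})\ge s\mu$ with equality iff $\mu=0$, this forces $f(s)=0$, and conversely $\pi_f=\mathrm{id}$ on $\{f=0\}$, which is an interval by concavity, so $\{f=0\}=[\inf C_{\pi_f},\sup C_{\pi_f}]$. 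On $[0,\inf C_{\pi_f})$ and on $(\sup C_{\pi_f},1]$ the derivative $\partial_s f$ is monotone, so where it is strictly monotone $\pi_f$ is strictly monotone and invertible, with $\pi_{f,\pm}^{-1}$ absolutely continuous and Lipschitz once $\pi_f\in\mathcal{U}$ is in force; the boundary data of~(\ref{eq:1.21})--(\ref{eq:1.22}) then match, because as $\lambda\to0^{-}$ the supremum defining $\psi_f$ is attained on the plateau $\{f=0\}$, hence at its left endpoint $\inf C_{\pi_f}$ (a fixed point of $\pi_f$), while $\partial_\lambda\psi_f(\lambda)\to0=z_-^{*}$ as $\lambda\to-\infty$ and $\partial_\lambda\psi_f(\lambda)\to1=z_+^{*}$ as $\lambda\to+\infty$.

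By Theorem~\ref{Theorem 4} the true $\psi$ is the unique solution of the Cauchy problems~(\ref{eq:1.21})--(\ref{eq:1.22}), and $\psi_f$ is such a solution, so $\psi=\psi_f$ for every $\lambda$; inverting the Fenchel duality of Section~\ref{Section2.3} then gives $-\hat\phi=\psi_f^{*}=(-f)^{**}=-f$, i.e.\ $\hat\phi=f$. It remains to upgrade this to $\phi=f$. On $[\inf C_{\pi_f},\sup C_{\pi_f}]$, Corollary~\ref{Corollary 3} gives $\phi=0$, which equals $f$ there because that interval is the plateau of $f$; at the endpoints, $\phi(0)=\log(1-\pi_f(0))=f(0)$ and $\phi(1)=\log\pi_f(1)=f(1)$ by direct inspection of the dynamics together with the boundary hypotheses; and on the open intervals $(0,\inf C_{\pi_f})$ and $(\sup C_{\pi_f},1)$ the function $-\phi$ is convex by the argument of Section~\ref{Section2.3}, whose hypotheses are precisely the invertibility of $\pi_f$ with absolutely continuous Lipschitz inverse established above, so there $\phi=\hat\phi=f$ as well, and continuity at the endpoints of the plateau glues the pieces. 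The step I expect to be the main obstacle is the regularity in (ii): controlling $\partial_s f$ near the ends of the plateau and near $0$ and $1$ --- where $\pi_f(s)-s$ may vanish at various rates and $\partial_s f$ may blow up --- tightly enough that $\pi_{f,\pm}^{-1}$ are genuinely absolutely continuous and Lipschitz, so that Theorem~\ref{Theorem 4} and the convexity argument of Section~\ref{Section2.3} both apply; and, when $f$ has affine pieces (so that $\psi_f$ acquires corners and $\pi_f$ loses injectivity), recovering the statement by approximating $f$ from within by strictly concave $f_\delta$ with $\pi_{f_\delta}\in\mathcal{U}$, applying the above to each, and passing to the limit via the uniform convergence $\pi_{f_\delta}\to\pi_f$ and continuity of the rate functional. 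The purely algebraic verification that $\psi_f$ satisfies~(\ref{eq:1.KKK}) is, by contrast, immediate from the way $\pi_f$ was defined.
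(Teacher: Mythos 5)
Your proposal is correct and follows essentially the same route as the paper, which obtains this corollary by running the Fenchel duality relations $\lambda\left(s\right)=-\partial_{s}\hat{\phi}\left(s\right)$, $\psi\left(\lambda\left(s\right)\right)=\hat{\phi}\left(s\right)-s\partial_{s}\hat{\phi}\left(s\right)$ backwards through the implicit ODE (\ref{eq:1.KKK}) and invoking Theorem \ref{Theorem 4} (uniqueness of the Cauchy problem) together with the convexity statement of Corollary \ref{Corollary 5} to identify $\phi$ with $\hat{\phi}=f$. The details you supply --- the algebraic check that the conjugate $\psi_{f}$ solves (\ref{eq:1.KKK}), the matching of the boundary data $z_{-}^{*}=0$, $z_{+}^{*}=1$, the identification of $\left\{ f=0\right\} $ with $\left[\inf C_{\pi_{f}},\sup C_{\pi_{f}}\right]$, and the endpoint values $\phi\left(0\right)=\log\left(1-\pi_{f}\left(0\right)\right)$, $\phi\left(1\right)=\log\pi_{f}\left(1\right)$ --- are precisely the steps the paper treats as immediate, so no genuinely different machinery is involved.
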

We believe that such result could find useful applications in those
stochastic approximation algorithms for which the process is required
to satisfy some given LDP. Notice that these results quite immediately
imply the convexity of $-\phi$ since if the cumulants $-\psi_{-}$
and $-\psi_{+}$ have continuous first derivatives their Frenchel-Legendre
transforms $-\hat{\phi}_{-}$, $-\hat{\phi}_{+}$ must be strictly
convex, with $\hat{\phi}_{-}=\phi_{-}$ and $\hat{\phi}_{+}=\phi_{+}$.
\begin{cor}
\label{Corollary 5}Let $\pi\in\mathcal{U}$ invertible on $\left[z_{-}^{*},\inf C_{\pi}\right)$,
and denote by $\pi_{-}^{-1}:\left[\pi\left(z_{-}^{*}\right),\pi(\inf C_{\pi})\right)\rightarrow\left[z_{-}^{*},\inf C_{\pi}\right)$
its inverse function. If $\pi_{-}^{-1}$ is $\mathcal{AC}$ and Lipschitz,
then $\phi_{-}$ is in $\mathcal{AC}$, is strictly concave on $\left[z_{-}^{*},\inf C_{\pi}\right)$,
and strictly increasing from $\log\bar{\pi}\left(z_{-}^{*}\right)$
to $0$. Let $\pi$ be invertible on $\left(\sup\, C_{\pi},z_{+}^{*}\right]$,
with inverse function $\pi_{+}^{-1}:\left(\pi(\sup\, C_{\pi}),\pi\left(z_{+}^{*}\right)\right]\rightarrow\left(\sup\, C_{\pi},z_{+}^{*}\right]$.
If $\pi_{+}^{-1}$ is $\mathcal{AC}$ and Lipschitz, then $\phi_{+}$
is in $\mathcal{AC}$, it is strictly concave on $\left(\sup\, C_{\pi},z_{+}^{*}\right]$,
and strictly decreasing from $0$ to $\log\pi\left(z_{+}^{*}\right)$. 
\end{cor}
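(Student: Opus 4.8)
The plan is to read the whole statement off Theorem~\ref{Theorem 4} through the Legendre duality between $-\psi_{-}$ and $-\hat{\phi}_{-}$ recorded just before that theorem, adding only two soft facts from convex analysis and the large-$|\lambda|$ asymptotics of the Cauchy problem (\ref{eq:1.21}). I treat only $\phi_{-}$; the argument for $\phi_{+}$ is identical with (\ref{eq:1.22}) and $\lambda\in(0,\infty)$. One may assume $\inf C_{\pi}\in(0,1)$, since otherwise $[z_{-}^{*},\inf C_{\pi})$ is empty and there is nothing to prove; because $C_{\pi}=\{s\in(0,1):\pi(s)=s\}$ is closed, $\inf C_{\pi}$ is then a contact, so $\pi(\inf C_{\pi})=\inf C_{\pi}$ and the right endpoint datum of (\ref{eq:1.21}) reads $\lim_{\lambda\to0^{-}}\partial_{\lambda}\psi_{-}(\lambda)=\inf C_{\pi}$.

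\emph{Regularity transfer.} By Theorem~\ref{Theorem 4}, on $(-\infty,0)$ one has $\psi=\psi_{-}$, the function $\psi_{-}$ is $\mathcal{AC}$ with continuous first derivative, and $\partial_{\lambda}\psi_{-}$ maps $(-\infty,0)$ continuously onto $(z_{-}^{*},\inf C_{\pi})$, increasing from $z_{-}^{*}$ (at $-\infty$) to $\inf C_{\pi}$ (at $0^{-}$); moreover $\psi_{-}$ is convex, being a pointwise limit of cumulant generating functions $\lambda\mapsto n^{-1}\log\mathbb{E}(e^{\lambda X_{n,n}})$. Hence $-\psi_{-}$ is a closed convex function of class $C^{1}$, so all its subdifferentials are singletons, and its Legendre--Fenchel conjugate is (by the passage preceding Theorem~\ref{Theorem 4}) precisely $-\hat{\phi}_{-}$, the restriction of $-\hat{\phi}=\mathrm{conv}(-\phi)$ to $[z_{-}^{*},\inf C_{\pi})$, with effective domain $[z_{-}^{*},\inf C_{\pi}]$. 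Since an affine segment of a conjugate function forces a non-singleton subdifferential of the original at the corresponding slope, $-\hat{\phi}_{-}$ has no non-degenerate affine segment, i.e.\ it is strictly convex on $(z_{-}^{*},\inf C_{\pi})$; so $\hat{\phi}_{-}$ is strictly concave there. (One checks similarly, from (\ref{eq:1.KKK}), that $\partial_{\lambda}\psi_{-}$ itself has no flat piece, since a slope $c$ would then force $(e^{c\lambda+d}-1)/(e^{\lambda}-1)$ constant in $\lambda$, impossible unless $c=1\notin(z_{-}^{*},\inf C_{\pi})$; but this is not needed below.)

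\emph{Identification and consequences.} A convex envelope that is strictly convex on an open interval coincides there with the function it envelops, so $-\phi=-\hat{\phi}=-\hat{\phi}_{-}$ on $(z_{-}^{*},\inf C_{\pi})$; hence $\phi_{-}=\hat{\phi}_{-}$ on this interval, and the identity extends to $\inf C_{\pi}$ by continuity of $\hat{\phi}_{-}$ (a closed convex function is continuous on the interior of its domain, and $\hat{\phi}_{-}(\inf C_{\pi})=0$ by Corollary~\ref{Corollary 3}, since $\phi\le\hat{\phi}\le0$). Thus $\phi_{-}$ is strictly concave on $[z_{-}^{*},\inf C_{\pi})$. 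Being concave and finite on an interval, $\phi_{-}$ is locally Lipschitz in the interior, hence $\mathcal{AC}$ on compact subintervals; together with its continuous extension with value $0$ at $\inf C_{\pi}$ this gives $\phi_{-}\in\mathcal{AC}$ on $[z_{-}^{*},\inf C_{\pi}]$ when $\phi_{-}(z_{-}^{*})$ is finite, and on compacts of $(z_{-}^{*},\inf C_{\pi}]$ otherwise. Finally $\phi_{-}$ is strictly concave and attains $0$ at its right endpoint, and $0$ is the global maximum of $\phi$ because $\phi\le0$ ($I_{\pi}\ge0$ in Theorem~\ref{Theorem 2}); a strictly concave function whose maximum on a sub-interval is reached at the right end is strictly increasing there, so $\phi_{-}$ is strictly increasing on $[z_{-}^{*},\inf C_{\pi})$.

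\emph{The left endpoint, and the main obstacle.} It remains to compute $\phi_{-}(z_{-}^{*})=\hat{\phi}_{-}(z_{-}^{*})=\inf_{\lambda\le0}\{\lambda z_{-}^{*}+\psi_{-}(\lambda)\}$. Since $\partial_{\lambda}\psi_{-}(\lambda)\ge z_{-}^{*}$ for $\lambda<0$, the map $\lambda\mapsto\lambda z_{-}^{*}+\psi_{-}(\lambda)$ is non-decreasing, so this infimum equals its limit as $\lambda\to-\infty$. Applying $\pi$ to (\ref{eq:1.21}) gives $\pi(\partial_{\lambda}\psi_{-}(\lambda))=(e^{\psi_{-}(\lambda)}-1)/(e^{\lambda}-1)$; letting $\lambda\to-\infty$ with $\partial_{\lambda}\psi_{-}(\lambda)\to z_{-}^{*}$: if $z_{-}^{*}>0$ then $\psi_{-}(\lambda)\sim z_{-}^{*}\lambda\to-\infty$, the right side tends to $1$, hence $\pi(z_{-}^{*})=1$ and $\phi_{-}(z_{-}^{*})=-\infty=\log\bar{\pi}(z_{-}^{*})$, the statement being read as $\phi_{-}(s)\downarrow-\infty$ as $s\downarrow z_{-}^{*}$ (consistent with $\phi>-\infty$ on the open interval, Corollary~\ref{Corollary 3}); if $z_{-}^{*}=0$ then $\partial_{\lambda}\psi_{-}(\lambda)\to0$, $\psi_{-}(\lambda)$ has a finite limit $\ell$, the right side tends to $1-e^{\ell}$, hence $\pi(0)=1-e^{\ell}$ and $\phi_{-}(0)=\ell=\log\bar{\pi}(0)=\log\bar{\pi}(z_{-}^{*})$. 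This settles $\phi_{-}$; $\phi_{+}$ is symmetric. The only genuinely load-bearing step is the identification $\hat{\phi}_{-}=\phi_{-}$: the sample-path results give merely $\phi<0$ on $(z_{-}^{*},\inf C_{\pi})$ (Corollary~\ref{Corollary 3}) and cannot decide whether $-\phi$ is already convex or sits strictly above its envelope there, and it is exactly the $C^{1}$-regularity of $\psi_{-}$ from Theorem~\ref{Theorem 4} that rules out affine pieces of the envelope and forces equality. The remaining care points are bookkeeping: checking that the Legendre correspondence genuinely pairs $\lambda\in(-\infty,0)$ with $s\in(z_{-}^{*},\inf C_{\pi})$ (the content of the two limits in (\ref{eq:1.21})), and interpreting ``$\phi_{-}\in\mathcal{AC}$'' on compact subintervals in the case $z_{-}^{*}>0$ where $\phi_{-}(z_{-}^{*})=-\infty$.
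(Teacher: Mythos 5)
Your proposal is correct and follows essentially the same route as the paper, whose entire justification is the remark immediately preceding Corollary \ref{Corollary 5}: the continuous differentiability of $\psi_{\pm}$ from Theorem \ref{Theorem 4} forces strict convexity of the Fenchel--Legendre transforms $-\hat{\phi}_{\pm}$, whence $\hat{\phi}_{\pm}=\phi_{\pm}$ and the monotonicity follows. You merely flesh out what the paper leaves implicit (the duality bookkeeping, the identification of the envelope with $\phi$, and the endpoint values via the $\lambda\rightarrow\pm\infty$ asymptotics of the Cauchy problems), so no new idea is introduced and none is missing.
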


\subsubsection*{Linear urns and the Bachi-Pal Model}

The last topic we present is the application to the Baghi-Pal model,
a widely investigated model due to its relevance in studying branching
phenomena and random trees (see \cite{Pemantle,Mahmoud,MahmoudBook,Johnson_Koz,Kotz Mahmoud}
for some reviews). Consider an urn with black and white balls: at
each step a ball is extracted uniformly from the urn and some new
balls are added or discarded according to the square matrix 
\begin{equation}
A:=\left(\begin{array}{cc}
a_{11} & a_{12}\\
a_{21} & a_{22}
\end{array}\right),
\end{equation}
with $a_{ij}\in\mathbb{Z}$, such that if the extraction resulted
in a black ball we add $a_{11}$ black balls and $a_{12}$ white balls,
otherwise we add $a_{21}$ black balls and $a_{22}$ white balls.
If $a_{11}+a_{12}=a_{21}+a_{22}=M$, then the number of balls increases
(ore decreases) by some deterministic rate $M$ and the urn is said
to be \textit{balanced}, if $M>0$ the urn is said to be also \textit{tenable}. 

Beside the many applicative aspects, our interest araises from the
fact that this is the first nontrivial model for which some large
deviations results have been obtained. In \cite{Fajolet-Analytic Urns,Fajolet2}
the so-called \textit{subtractive} case (negative diagonal entries)
is fully analyzed by purely analytic methods, obtaining an explicit
characterization of the rate function and other important results.
Another LDP study on linear urns involving more probabilistic techniques
has been provided by Bryc et Al. \cite{Bryc}. In this paper they
consider a process with urn function $\pi\left(s\right)=1-s/\alpha$,
$\alpha\in\left(1,\infty\right)$, giving an expression for the Cumulant
Generating Function and other related results. 

\begin{onehalfspace}
\begin{figure}
\begin{singlespace}
\centering{}\includegraphics[scale=2.5]{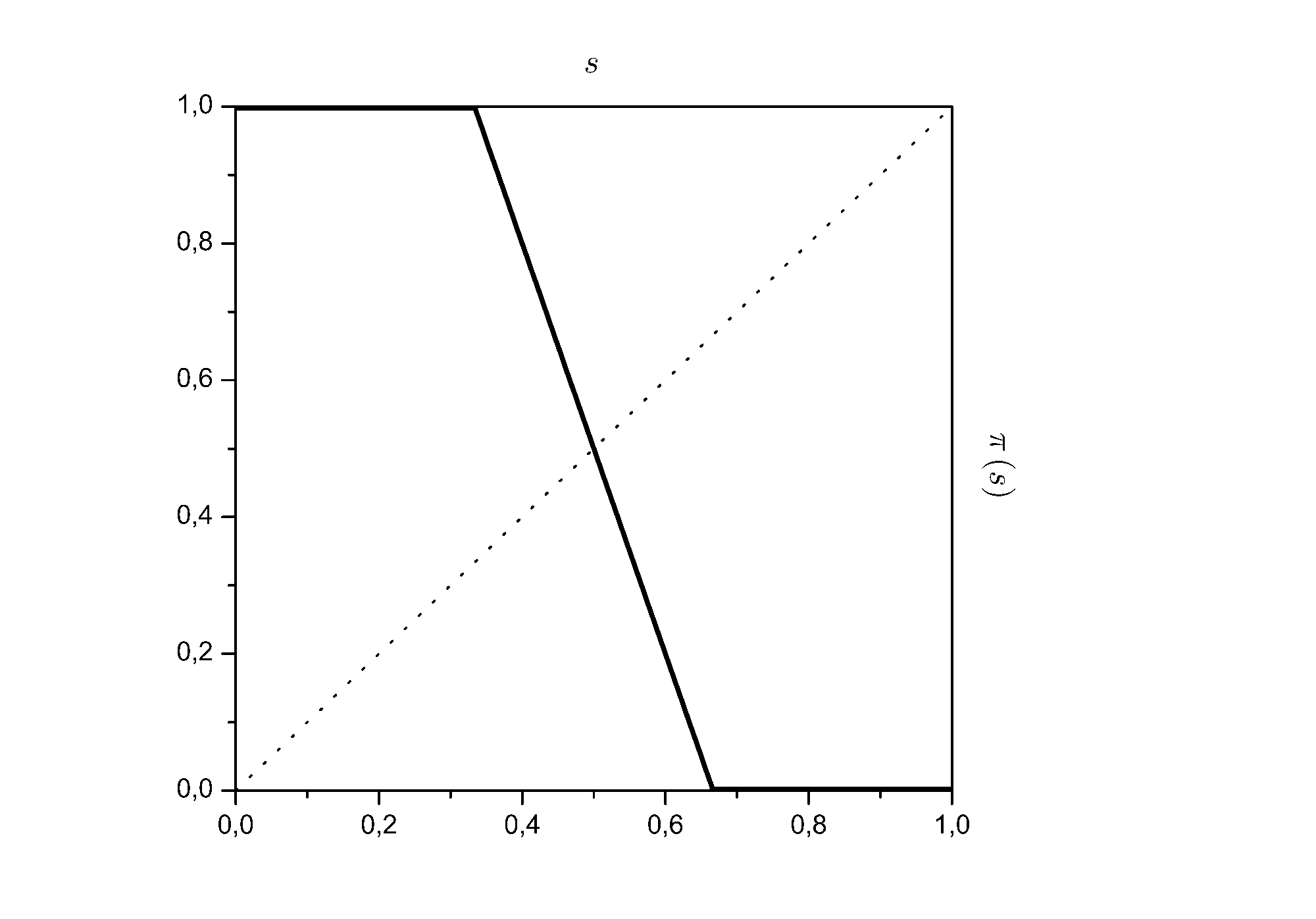}\\
\includegraphics[scale=2.5]{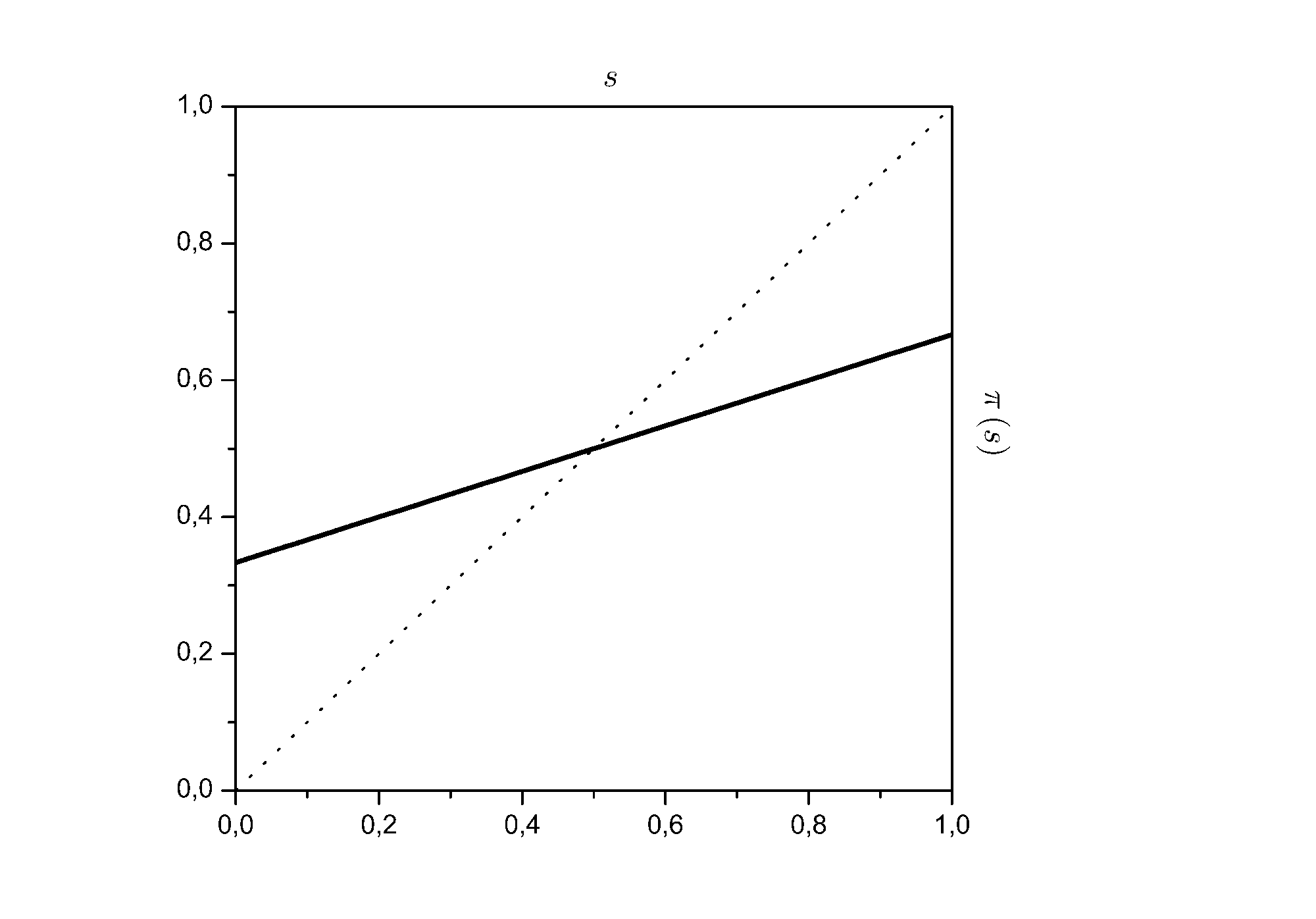}\caption{{\small{}\label{fig:4}}\textit{\small{}Urn functions from Eq.s (\ref{eq:2.12})
and (\ref{eq:2.12.2}) of Bagchi-Pal models for $a_{11}=a_{22}=-1$,
$a_{21}=a_{12}=2$ (upper figure) and $a_{11}=a_{22}=2$, $a_{21}=a_{12}=1$
(lower figure). The first one is a subtractive urn of the kind considered
in \cite{Fajolet-Analytic Urns}, while the second is an additive
and tenable urn.}}
\end{singlespace}
\end{figure}

\end{onehalfspace}

Let show that the above model is equivalent to a linear urn function
$\pi\left(s\right)=s_{0}+b\left(s-s_{0}\right)$ provided that $A$
fulfills some self-consistency conditions. Let $B_{k}$ and $W_{k}$
be the number of black and withe balls of a Bagchi-Pal urn at time
$k$, let $T_{k}=B_{k}+W_{k}$ be the total number of balls and 
\begin{equation}
A=\left(\begin{array}{cc}
a_{11} & M-a_{11}\\
M-a_{22} & a_{22}
\end{array}\right)
\end{equation}
the reinforcement matrix, where we used the balancing constraint $a_{11}+a_{12}=a_{21}+a_{22}=M$.
Since the balancing ensures that $T_{k}=\left(B_{0}+W_{0}\right)+Mk$,
we can rescale $k\rightarrow k-M^{-1}\left(B_{0}+W_{0}\right)$ and
consider $k\geq m=M^{-1}\left(B_{0}+W_{0}\right)$, $T_{k}=Mk$. Then,
define the variable 
\begin{equation}
X_{n,k}=\frac{B_{k}-\left(M-a_{22}\right)k}{a_{11}+a_{22}-M},
\end{equation}
with $a_{11}+a_{22}-M\ne0$: we can show that the process $\left\{ X_{n,k}:m\leq k\leq n\right\} $
defined by the urn function $\pi\left(s\right)=s_{0}+b\left(s-s_{0}\right)$,
with
\begin{equation}
s_{0}=\frac{a_{22}-M}{2M-a_{11}-a_{22}},\, b=\frac{a_{11}+a_{22}}{M}-1,\, X_{n,m}=\frac{B_{0}-\left(M-a_{22}\right)m}{a_{11}+a_{22}-M}.
\end{equation}
is equivalent to a Bagchi-Pal model with reinforcement matrix
\begin{equation}
A=M\left(\begin{array}{cc}
b+s_{0}\left(1-b\right) & \left(1-s_{0}\right)\left(1-b\right)\\
s_{0}\left(1-b\right) & 1-s_{0}\left(1-b\right)
\end{array}\right).\label{eq:2.12}
\end{equation}
Since the Bagchi-Pal model usually considers an integer reinforcement
matrix, we need $M$, $s_{0}$, $b$, $m$ such that both $B_{0}+W_{0}$
and the elements of $A$ are integers. If $a_{12}=a_{21}=0$ we recover
the Polya Urn ($b=1$), while we obviously have to discard the case
$a_{11}=a_{21}$ (deterministic evolution of the urn: $a_{11}+a_{22}-M=0$).
Usually some \textit{tenability} conditions are assumed which ensures
that the process can't be stopped, ie, that the total number of balls
is deterministic and always growing ($M>0$), that $a_{12}\geq0$,
$a_{21}\geq0$ and if $a_{11}<0$ then $\left(W_{0}/a_{11}\right),\left(a_{21}/a_{11}\right)\in\mathbb{Z}$,
if $a_{22}<0$ then $\left(B_{0}/a_{22}\right),\left(a_{12}/a_{22}\right)\in\mathbb{Z}$.
The last two conditions ensure that only balls of the same color of
that drawn can be removed from the urn: this prevents from stopping
the process by impossible removals. 

\label{Section2.3.1} According to the above discussion, and considering
that $B_{0}/m\in\left[0,1\right]$, it is possible to show that the
general urn function describing the \textit{balanced} Baghi-Pal urns
is 
\begin{equation}
\pi\left(s\right)=\mathbb{I}_{\left\{ s_{0}+b\left(s-s_{0}\right)\geq1\right\} }+\left(s_{0}+b\left(s-s_{0}\right)\right)\mathbb{I}_{\left\{ 0<s_{0}+b\left(s-s_{0}\right)<1\right\} }.\label{eq:2.12.2}
\end{equation}
As example, the subtractive urn $a_{11}=a_{22}=-1$, $a_{12}=a_{21}=2$
is described by the urn function (see Figure \ref{fig:4})
\begin{equation}
\pi\left(s\right)=\mathbb{I}_{\left\{ s\in\left[0,1/3\right)\right\} }+\left(2-3s\right)\mathbb{I}_{\left\{ s\in\left[1/3,2/3\right]\right\} }.
\end{equation}
In the following we provide a complete characterization of the CGF
for the case of linear urn, which also includes all cases of the balanced
Bagchi-Pal models. We only consider linear urn functions with $a>0$
and $a+b<1$ to exclude the ``trivial'' cases with $\pi\left(0\right)=0$
and $\pi\left(1\right)=1$, for which by Corollary \ref{Corollary 3}
we would find $\phi\left(s\right)=0$ for any $s\in\left[0,1\right]$,
and for which we can even compute the optimal trajectories by Corollaries
\ref{Corollary4}, \ref{Corollary4.1}.
\begin{cor}
\label{Corollary8}Let $\pi$ be as in Eq. (\ref{eq:2.12.2}) with
$a>0$ and $a+b<1$, $\psi$ as in Eq. (\ref{eq:1.18}) and define
the function
\begin{equation}
B\left(\alpha,\beta;x_{1},x_{2}\right)=\int_{x_{1}}^{x_{2}}dt\,\left(1-t\right)^{\alpha-1}t^{\beta-1}.
\end{equation}
Then, for $\lambda>0$ we have $\psi=\psi_{+}$, with 
\begin{equation}
\psi_{+}\left(\lambda\right)=\psi_{+}\left(\lambda;b<0\right)\mathbb{I}_{\left\{ b<0\right\} }+\psi_{+}\left(\lambda;b>0\right)\mathbb{I}_{\left\{ b>0\right\} },
\end{equation}
where \textup{$\psi_{+}\left(\lambda;b>0\right)$, $\psi_{+}\left(\lambda;b<0\right)$}
are given by the expressions 
\begin{equation}
e^{-\psi_{+}\left(\lambda;b>0\right)}=1-{\textstyle \frac{a}{b}}e^{\frac{a}{b}\lambda}\left({\textstyle 1-e^{-\lambda}}\right)^{\frac{1}{b}}B\left({\textstyle {\textstyle \frac{a}{b}},\frac{b-1}{b};1-e^{-\lambda},1}\right),
\end{equation}
\begin{equation}
e^{-\psi_{+}\left(\lambda;b<0\right)}=1+{\textstyle \frac{a}{b}}e^{\frac{a}{b}\lambda}\left({\textstyle 1-e^{-\lambda}}\right)^{\frac{1}{b}}B\left({\textstyle {\textstyle {\textstyle \frac{a}{b}},\frac{b-1}{b};0,\,}1-e^{-\lambda}}\right).
\end{equation}
If $\lambda<0$ we have instead $\psi=\psi_{-}$, with
\begin{equation}
\psi_{-}\left(\lambda\right)=\psi_{-}\left(\lambda;b<0\right)\mathbb{I}_{\left\{ b<0\right\} }+\psi_{-}\left(\lambda;b>0\right)\mathbb{I}_{\left\{ b>0\right\} },
\end{equation}
where \textup{$\psi_{-}\left(\lambda;b>0\right)$, $\psi_{-}\left(\lambda;b<0\right)$
are given by}
\begin{equation}
e^{-\psi_{-}\left(\lambda;b>0\right)}=1+{\textstyle \frac{a}{b}}e^{-\frac{1-a+b}{b}\lambda}\left({\textstyle 1-e^{\lambda}}\right)^{\frac{1}{b}}B\left({\textstyle \frac{1-a}{b},\frac{b-1}{b};1-e^{\lambda},1}\right),
\end{equation}
\begin{equation}
e^{-\psi_{-}\left(\lambda;b<0\right)}=1-{\textstyle \frac{a}{b}}e^{-\frac{1-a+b}{b}\lambda}\left({\textstyle 1-e^{\lambda}}\right)^{\frac{1}{b}}B\left({\textstyle \frac{1-a}{b},\frac{b-1}{b};0,1-e^{\lambda}}\right).
\end{equation}

\end{cor}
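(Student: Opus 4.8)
The plan is to write down the two Cauchy problems of Theorem~\ref{Theorem 4} for a linear urn and integrate them explicitly; the key point is that an affine $\pi$ has an affine inverse, which renders the CGF equation solvable in closed form.

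\emph{Reduction to Theorem~\ref{Theorem 4}.} Write the linear urn function as $\pi(s)=a+bs$, so that $a=s_{0}(1-b)$; under the standing hypotheses $a>0$ and $a+b<1$ one has $s_{0}\in(0,1)$, $b<1$ (the degenerate $b=0$ being excluded), hence $C_{\pi}=\{s_{0}\}$, $z_{-}^{*}=0$, $z_{+}^{*}=1$, and both restrictions $\pi_{-}^{-1}$, $\pi_{+}^{-1}$ are the affine map $y\mapsto(y-a)/b$, which is trivially $\mathcal{AC}$ and Lipschitz. Thus Theorem~\ref{Theorem 4} applies, and substituting the affine inverse into (\ref{eq:1.21})--(\ref{eq:1.22}) reduces each Cauchy problem to
\[
\partial_{\lambda}\psi_{\pm}(\lambda)=\frac{1}{b}\left(\frac{e^{\psi_{\pm}(\lambda)}-1}{e^{\lambda}-1}-a\right),\qquad\pm\lambda>0,
\]
with $\psi_{\pm}(0^{\pm})=0$, $\partial_{\lambda}\psi_{\pm}(0^{\pm})=s_{0}$, $\lim_{\lambda\to\infty}\partial_{\lambda}\psi_{+}=1$, $\lim_{\lambda\to-\infty}\partial_{\lambda}\psi_{-}=0$.

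\emph{Linearization and telescoping.} The substitution $g:=e^{-\psi_{\pm}}$ turns this Riccati-type equation into the \emph{linear} first-order ODE
\[
g'-\left(\frac{a}{b}+\frac{1}{b(e^{\lambda}-1)}\right)g=-\frac{1}{b(e^{\lambda}-1)},
\]
the quadratic terms cancelling because $e^{\psi_{\pm}}$ enters only through $1/g$. Its integrating factor is $M(\lambda)=\exp\bigl(-\tfrac{a}{b}\lambda-\tfrac{1}{b}\!\int\!\tfrac{d\lambda}{e^{\lambda}-1}\bigr)$, i.e.\ $M=e^{-\frac{a}{b}\lambda}(1-e^{-\lambda})^{-1/b}$ for $\lambda>0$ and $M=e^{\frac{1-a}{b}\lambda}(1-e^{\lambda})^{-1/b}$ for $\lambda<0$. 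The crucial observation is that the source is almost an exact derivative, $M\bigl(-\tfrac{1}{b(e^{\lambda}-1)}\bigr)=M'+\tfrac{a}{b}M$, so variation of constants collapses to
\[
g(\lambda)=1+\frac{M(\lambda_{0})}{M(\lambda)}\bigl(g(\lambda_{0})-1\bigr)+\frac{a}{b}\,M(\lambda)^{-1}\!\int_{\lambda_{0}}^{\lambda}\!M(\lambda')\,d\lambda',
\]
which already has the shape ``$1+(\text{prefactor})\cdot(\text{integral})$'' of the statement.

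\emph{Beta function, base point, and the main obstacle.} The substitution $t=1-e^{-\lambda'}$ (for $\lambda>0$) or $t=1-e^{\lambda'}$ (for $\lambda<0$) identifies $\int M\,d\lambda'$ with $\int(1-t)^{\alpha-1}t^{\beta-1}\,dt$, where $(\alpha,\beta)=(\tfrac{a}{b},\tfrac{b-1}{b})$ for $\psi_{+}$ and $(\tfrac{1-a}{b},\tfrac{b-1}{b})$ for $\psi_{-}$ --- exactly the function $B$ of the corollary. The base point $\lambda_{0}$ is then chosen so that the middle term vanishes: when $b>0$ one lets $\lambda_{0}\to\pm\infty$ (there $M(\lambda_{0})\to0$ while $g(\lambda_{0})-1\to-1$), which leaves a Beta integral over $[\,1-e^{\mp\lambda},1\,]$ and gives the $b>0$ formulas; when $b<0$ one lets $\lambda_{0}\to0$ (there $M(\lambda_{0})\to0$ and $g(\lambda_{0})\to1$), which leaves a Beta integral over $[\,0,1-e^{\mp\lambda}\,]$ and gives the $b<0$ formulas. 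What then remains is to verify that the solution so pinned down also satisfies the \emph{second} boundary condition --- the derivative of $\psi_{\pm}$ at $\lambda=0$ when $b>0$, and at $\lambda=\pm\infty$ when $b<0$ --- which amount to $0\cdot\infty$ limits of $(1-e^{\mp\lambda})^{1/b}B$ that one settles with the leading endpoint asymptotics of the incomplete Beta function; once both Cauchy data are matched, the uniqueness assertion of Theorem~\ref{Theorem 4} forces $\psi_{\pm}$ to coincide with the exhibited closed form. I expect this last step --- choosing the base point, orienting the Beta integral, and resolving the indeterminate endpoint limits consistently across the four cases $\lambda\gtrless0$, $b\gtrless0$ --- to be the only delicate part; everything else is the elementary linear-ODE computation above.
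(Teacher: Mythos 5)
Your proposal is correct and follows essentially the same route as the paper's proof: reduce to the Cauchy problems of Theorem~\ref{Theorem 4} using the affine inverse of $\pi$, linearize via $g=e^{-\psi}$ (the paper's $y=e^{-\psi}-1$ after the change of variables $z=1-e^{\mp\lambda}$), integrate with the same integrating factor so that the solution appears as an incomplete Beta integral, and fix the single free constant by one boundary condition, with the uniqueness statement of Theorem~\ref{Theorem 4} doing the rest. One caveat worth recording: carried to the end, your computation gives the prefactors $e^{+\frac{a}{b}\lambda}$ for $\lambda>0$ and $e^{-\frac{1-a}{b}\lambda}$ for $\lambda<0$, i.e. $(1-z)^{-a/b}$ and $(1-z')^{-(1-a)/b}$ exactly as in the paper's intermediate displays, so the exponents printed in the corollary's final formulas should be read accordingly --- a transcription discrepancy inside the paper, not a gap in your argument.
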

An intriguing property of the above solution is that if $b>0$ then
$\psi$ is non-analytic at $\lambda\rightarrow0^{-}$($\lambda\rightarrow0^{+}$).
We can see this, for example, from the expression of $\psi_{-}\left(\lambda;b>0\right)$:
expanding for small $\lambda$ we find a non vanishing therm $O\left(\lambda^{1/b}\log\left(\lambda\right)\right)$
if $1/b\in\mathbb{N}$ and $O\left(\lambda^{1/b}\right)$ if $1/b\notin\mathbb{N}$,
which implies that the derivatives of order $\left\lceil 1/b\right\rceil $
and higher are singular in $\lambda=0$. The singularity disappears
for $b<0$. 

This behavior is not observed in case of \textit{subtractive} urns
for which the rate function is always analytic in $\lambda=0$, as
first noticed in \cite{Fajolet-Analytic Urns}. This is not surprising
since these urns are affine to the case $b<0$ for which we also observe
a regular solution. Notice that a non-analytic point in $\lambda=0$
implies divergent cumulants from $\left\lceil 1/b\right\rceil $ order
onwards. Moreover, if $b>1/2$ the shape of $\phi\left(s\right)$
around its peak is not even Gaussian anymore, since we find a divergent
second cumulant $\partial_{\lambda}^{\,2}\,\psi\left(\lambda\right)=O\left(\lambda^{-\gamma}\right)$
with $\gamma=2-1/b>0$. If $b=1/2$ we a logarithmic divergence of
$\partial_{\lambda}^{\,2}\,\psi\left(\lambda\right)$ is observed
as expected from the moment analysis of the Bagchi-Pal model (see
\cite{MahmoudBook} for a review).

\section{Proofs.}

\label{Section3}In this section we collected most of the proofs and
technical features of the present work. The proofs are presented in
the order they appeared in the previous section. We will first deal
with the Sample-Path Large Deviation Principle, then the entropy of
the event $\left\{ X_{n}=\left\lfloor sn\right]\right\} $ and, finally,
with the Cumulant generating function. We assume that all random variables
and processes are defined in a common probability space $\left(\Omega,\mathcal{F},\mathbb{P}\right)$.

\subsection{Sample-Path Large Deviation Principle.}

\label{Section3.1}Here we prove the existence of Sample-Path LDPs
for $\chi_{n}$ using some standard Large Deviation tools, such as
Mogulskii Theorem and the Varadhan Integral Lemma. 

Before we get into the core of this, we recall that $\left\Vert \varphi\right\Vert :=\sup_{\tau\in\left[0,1\right]}\left|\varphi_{\tau}\right|$
is the usual supremum norm, and we consider the metric space $\left(\mathcal{Q},\,\left\Vert \cdot\right\Vert \right)$,
with $\mathcal{Q}$ defined in Eq. (\ref{eq:Q-def}). Note that $\mathcal{Q}$
is compact with respect to the supremum norm topology. Moreover, since
by definition $\left\Vert \varphi\right\Vert \leq1$ for any $\varphi\in\mathcal{Q}$
we trivially find that $\mathcal{Q}\subset L_{\infty}\left(\left[0,1\right]\right)$.

\subsubsection{Change of measure.}

\label{Section3.1.1}We need a variational representation for the
rate function of $\chi_{n}$ in terms of sample paths. Let $\varphi:=\left\{ \varphi_{\tau}:\tau\in\left[0,1\right]\right\} $,
and define 
\begin{equation}
\mathcal{Q}_{n}:=\{\varphi:\,\varphi_{\tau}=\frac{1}{n}\sum_{1\leq i\leq\left\lfloor n\tau\right\rfloor }\theta_{i}+(\tau-n^{-1}\left\lfloor n\tau\right\rfloor )\,\mathbf{\theta}_{\left\lfloor n\tau\right\rfloor }\,,\,\theta_{i}\in\left\{ 0,1\right\} \}.\label{eq:2.9-1}
\end{equation}
The above set is the support of $\chi_{n}$ for $n<\infty$: note
that $\mathcal{Q}_{n}\subset\mathcal{Q}$ for all $n$. We also introduce
the following notation: 
\begin{equation}
Y_{n,k}\left(\varphi\right):=n\varphi_{k/n},\,\delta Y_{n,k}\left(\varphi\right):=n\left(\varphi_{\left(k+1\right)/n}-\varphi_{k/n}\right),\label{eq:2.10}
\end{equation}
Then, let $\varphi\in\mathcal{\mathcal{Q}}_{n}$: by Eq. (\ref{eq:1.2})
we can write the sample-path probability $\mathbb{P}\left(\chi_{n}=\varphi\right)$
in terms of $\varphi$ as follows:
\begin{equation}
\mathbb{P}\left(\chi_{n}=\varphi\right)=\prod_{1\leq k\leq n-1}\pi\left(Y_{n,k}\left(\varphi\right)/k\right)^{\delta Y_{n,k}\left(\varphi\right)}\bar{\pi}\left(Y_{n,k}\left(\varphi\right)/k\right)^{1-\delta Y_{n,k}\left(\varphi\right)}.\label{eq:2.1-1}
\end{equation}
Our first step is to prove Theorem \ref{Theorem 1} under the additional
assumption that $\pi\left(s\right)\in\left(0,1\right)$ for all $s\in\left[0,1\right]$.
In this case the proof can be obtained by straight applications of
the Mogulskii Theorem, the Varadhan Integral Lemma and the following
two lemmas.

Let $S_{\pi}:\mathcal{Q}\rightarrow\left(-\infty,0\right]$ be as
in Eq. (\ref{eq:1.10}). The first lemma shows the continuity of $S_{\pi}$
with respect to the supremum norm for any compact subset of $\mathcal{\mathcal{Q}}$
and any $\pi\in\mathcal{U}$, $\pi\in\left(0,1\right)$. The second
gives an approximation argument to the functional $S_{\pi}$ for the
entropy of the event $\left\{ \chi_{n}=\varphi\right\} $ when $\varphi\in\mathcal{\mathcal{Q}}_{n}$.
\begin{lem}
\label{Lemma 5-1}Assume $\pi\in\mathcal{U}$ and $\pi\left(s\right)\in\left(0,1\right)$
for all $s\in\left[0,1\right]$. The functional $S_{\pi}:\mathcal{Q}\rightarrow\left(-\infty,0\right]$
is continuous on the metric space $\left(\mathcal{\mathcal{Q}},\,\left\Vert \cdot\right\Vert \right)$.
Moreover, a function $W_{\pi}:\left[0,1\right]\rightarrow\left[0,\infty\right)$
exists such that $\lim{}_{s\rightarrow0}W_{\pi}\left(s\right)=0$
and $\left|S_{\pi}\left[\varphi\right]-S_{\pi}\left[\eta\right]\right|\leq W_{\pi}\left(\left\Vert \varphi-\eta\right\Vert \right)$,
$\forall\varphi,\eta\in\mathcal{Q}$.\end{lem}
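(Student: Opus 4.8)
The plan is as follows. Since the asserted inequality $|S_\pi[\varphi]-S_\pi[\eta]|\le W_\pi(\|\varphi-\eta\|)$ with $W_\pi(s)\to0$ is a \emph{uniform} continuity statement, it already implies the claimed continuity of $S_\pi$ on $(\mathcal{Q},\|\cdot\|)$, so the lemma reduces to constructing $W_\pi$. I would begin with the elementary observations that make $S_\pi$ meaningful. Every $\varphi\in\mathcal{Q}$ is Lipschitz of constant $1$ with $\varphi_0=0$, hence $\dot\varphi_\tau\in[0,1]$ for a.e.\ $\tau$ and $\varphi_\tau/\tau\in[0,1]$ for all $\tau\in(0,1]$; and since $\pi$ is continuous with $\pi(s)\in(0,1)$ on the compact $[0,1]$, the functions $\log\pi$ and $\log\bar{\pi}$ are bounded, uniformly continuous and strictly negative on $[0,1]$ (note that here only continuity of $\pi$ is used, not the full $\mathcal{U}$-modulus condition). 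Using $d\varphi_\tau=\dot\varphi_\tau\,d\tau$ and $d\tilde\varphi_\tau=(1-\dot\varphi_\tau)\,d\tau$ we may write
\[
S_\pi[\varphi]=\int_0^1\big[\dot\varphi_\tau\log\pi(\varphi_\tau/\tau)+(1-\dot\varphi_\tau)\log\bar{\pi}(\varphi_\tau/\tau)\big]\,d\tau ,
\]
and since the integrand is, in absolute value, a convex combination of $|\log\pi|$ and $|\log\bar{\pi}|$, we obtain $|S_\pi[\varphi]|\le M:=\max(\|\log\pi\|_\infty,\|\log\bar{\pi}\|_\infty)<\infty$ and $S_\pi[\varphi]\le0$, so indeed $S_\pi:\mathcal{Q}\to(-\infty,0]$. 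Consequently $W_\pi(s):=\sup\{|S_\pi[\varphi]-S_\pi[\eta]|:\varphi,\eta\in\mathcal{Q},\ \|\varphi-\eta\|\le s\}$ is finite (at most $2M$) and nondecreasing, and it remains only to prove $W_\pi(s)\to0$ as $s\to0^+$.

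Next I would split $S_\pi$ into a part with no derivative and a part carrying $\dot\varphi$. With $h:=\log(\pi/\bar{\pi})$, which is bounded and uniformly continuous on $[0,1]$,
\[
S_\pi[\varphi]=\int_0^1\log\bar{\pi}(\varphi_\tau/\tau)\,d\tau+\int_0^1\dot\varphi_\tau\,h(\varphi_\tau/\tau)\,d\tau .
\]
For the first integral, and likewise for the ``cross'' term $\int_0^1\dot\eta_\tau\,[h(\varphi_\tau/\tau)-h(\eta_\tau/\tau)]\,d\tau$ that arises when subtracting the second integrals for $\varphi$ and $\eta$, the estimate is the routine one: fix a cutoff $\delta\in(0,1)$, on $(0,\delta]$ bound the integrand by a constant (using $|\dot\eta_\tau|\le1$ for the cross term), producing a contribution $O(\delta)$ uniform in $\varphi,\eta$; on $[\delta,1]$ use $|\varphi_\tau/\tau-\eta_\tau/\tau|\le\|\varphi-\eta\|/\delta$ together with the modulus of continuity of $\log\bar{\pi}$ (resp.\ of $h$). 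Taking $\delta$ small and then $\|\varphi-\eta\|$ small makes both pieces as small as desired.

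The crux is the remaining term $\int_0^1(\dot\varphi_\tau-\dot\eta_\tau)\,h(\varphi_\tau/\tau)\,d\tau$, since closeness in $\|\cdot\|$ gives no control whatsoever on $\dot\varphi-\dot\eta$. Here I would smooth $h$: for $\epsilon>0$ pick $h_\epsilon\in C^1([0,1])$ with $\|h-h_\epsilon\|_\infty<\epsilon$ (Weierstrass); replacing $h$ by $h-h_\epsilon$ costs at most $2\epsilon$, because $|\dot\varphi_\tau-\dot\eta_\tau|\le2$. For the smooth part, on $[\delta,1]$ the map $\tau\mapsto\varphi_\tau/\tau$ is Lipschitz of constant $\le1/\delta$ (from $\frac{d}{d\tau}(\varphi_\tau/\tau)=\tau^{-1}(\dot\varphi_\tau-\varphi_\tau/\tau)$ and $|\dot\varphi_\tau-\varphi_\tau/\tau|\le1$), so $h_\epsilon(\varphi_\tau/\tau)$ is absolutely continuous there and I can integrate by parts, moving the derivative off $\varphi-\eta$:
\[
\int_\delta^1(\dot\varphi_\tau-\dot\eta_\tau)\,h_\epsilon(\varphi_\tau/\tau)\,d\tau=\Big[(\varphi_\tau-\eta_\tau)\,h_\epsilon(\varphi_\tau/\tau)\Big]_\delta^1-\int_\delta^1(\varphi_\tau-\eta_\tau)\,h_\epsilon'(\varphi_\tau/\tau)\,\frac{\dot\varphi_\tau-\varphi_\tau/\tau}{\tau}\,d\tau .
\]
The boundary term is $\le2\|h_\epsilon\|_\infty\|\varphi-\eta\|$; the last integral is at most $\|h_\epsilon'\|_\infty\,\|\varphi-\eta\|\int_\delta^1\tau^{-1}d\tau=\|h_\epsilon'\|_\infty\log(1/\delta)\,\|\varphi-\eta\|$ in absolute value; and the part over $(0,\delta]$ is $\le2\|h_\epsilon\|_\infty\,\delta$. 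Collecting everything, $|S_\pi[\varphi]-S_\pi[\eta]|$ is at most $2\epsilon$ plus an $O(\delta)$ term plus a term $c(\epsilon,\delta)\,\|\varphi-\eta\|$ with $c(\epsilon,\delta)$ depending only on $\epsilon,\delta$. Given $\rho>0$ one then fixes $\epsilon$ so the $\epsilon$-terms are $<\rho/3$, then $\delta$ so the $\delta$-terms are $<\rho/3$, then $\sigma>0$ so that $\|\varphi-\eta\|<\sigma$ forces $c(\epsilon,\delta)\|\varphi-\eta\|<\rho/3$; hence $W_\pi(s)\le\rho$ for all $s<\sigma$, giving $W_\pi(s)\to0$. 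The one genuinely delicate point is this last estimate, specifically the order of quantifiers ($\epsilon$, then $\delta$, then $\|\varphi-\eta\|$) forced by the $1/\tau$ factor produced by the integration by parts, which is integrable only after the $\delta$-cutoff and whose growing coefficient $\log(1/\delta)$ must be absorbed by smallness of $\|\varphi-\eta\|$ at the very end.
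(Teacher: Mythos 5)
Your proposal is correct, and it takes a genuinely different route from the paper's. The paper keeps the $\log\pi$ and $\log\bar{\pi}$ integrals separate and splits each difference into a ``same differential, different argument'' piece and a ``different differential, same argument'' piece: the first is controlled through the defining modulus $f$ of the class $\mathcal{U}$, using $\left|\varphi_{\tau}-\eta_{\tau}\right|\leq\min\left\{ \tau,\left\Vert \varphi-\eta\right\Vert \right\} $ and the function $H_{f}\left(s\right):=s\int_{s}^{1}dz\, f\left(z\right)/z^{2}$, which handles the region $\tau\approx0$ with no cutoff and produces an explicit modulus $W_{\pi}$ built from $H_{f}$ plus a linear term; the second, $\int d\left(\varphi_{\tau}-\eta_{\tau}\right)\log\pi\left(\varphi_{\tau}/\tau\right)$, is disposed of in one line by $\left\Vert \log\pi\right\Vert \left\Vert \varphi-\eta\right\Vert $. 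You instead use a $\delta$-cutoff near $\tau=0$ plus plain uniform continuity of $\log\pi$, $\log\bar{\pi}$ (so the Dini-type condition in the definition of $\mathcal{U}$ is never invoked), and you treat the derivative-difference term $\int\left(\dot{\varphi}_{\tau}-\dot{\eta}_{\tau}\right)h\left(\varphi_{\tau}/\tau\right)d\tau$ by Weierstrass smoothing of $h$ and integration by parts, absorbing the resulting $\log\left(1/\delta\right)$ factor at the last quantifier. This is precisely the delicate term, since sup-norm closeness gives no control of $\dot{\varphi}-\dot{\eta}$; the paper's one-line bound implicitly treats the total variation of $\varphi-\eta$ as if it were $\left\Vert \varphi-\eta\right\Vert $, and your smoothing-plus-integration-by-parts is the rigorous way to justify a bound of that kind. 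The trade-off: the paper's argument yields an explicit, quantitative $W_{\pi}$ tied to $f$, and the same computation is re-run in Lemma \ref{Lemma 5} for the discretization error, where the perturbed path is not in $\mathcal{Q}$; your argument is more general (it shows the conclusion for any continuous $\pi$ with values in $\left(0,1\right)$) and more careful on the critical term, but delivers only an abstract modulus $W_{\pi}$ defined as a supremum, which is nevertheless all that the subsequent proofs use, since they only need $W_{\pi}\left(s\right)\rightarrow0$.
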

\begin{proof}
Take any $\varphi,\eta\in\mathcal{Q}$. By definition of $S_{\pi}$,
we can rearrange the terms as follows
\begin{multline}
S_{\pi}\left[\varphi\right]-S_{\pi}\left[\eta\right]=\int_{\tau\in\left[0,1\right]}d\varphi_{\tau}\,\log\pi\left(\varphi_{\tau}/\tau\right)-\int_{\tau\in\left[0,1\right]}d\eta_{\tau}\,\log\pi\left(\eta_{\tau}/\tau\right)+\\
+\int_{\tau\in\left[0,1\right]}d\tilde{\varphi}_{\tau}\,\log\bar{\pi}\left(\varphi_{\tau}/\tau\right)-\int_{\tau\in\left[0,1\right]}d\tilde{\eta}_{\tau}\,\log\bar{\pi}\left(\eta_{\tau}/\tau\right),\label{eq:2.0}
\end{multline}
where we used the notation $\tilde{\varphi}=\tau-\varphi$, $\tilde{\eta}=\tau-\eta$.
Let us first consider $\log\pi\left(s\right)$: by definition of the
set $\mathcal{U}$ and the assumption that $\pi\in\left(0,1\right)$
we have that $\left\Vert \log\pi\right\Vert <\infty$, and that \textit{$\left|\log\pi\left(x+\delta\right)-\log\pi\left(x\right)\right|\leq f\left(\left|\delta\right|\right)$
}and \textit{$\lim_{\epsilon\rightarrow0}\epsilon\int_{\epsilon}^{1}dz\, f\left(z\right)/z^{2}=0$}.
Then we can write
\begin{multline}
\int_{\tau\in\left[0,1\right]}d\varphi_{\tau}\,\log\pi\left(\varphi_{\tau}/\tau\right)-\int_{\tau\in\left[0,1\right]}d\eta_{\tau}\,\log\pi\left(\eta_{\tau}/\tau\right)=\\
=\int_{\tau\in\left[0,1\right]}d\varphi_{\tau}\,\left[\log\pi\left(\varphi_{\tau}/\tau\right)-\log\pi\left(\eta_{\tau}/\tau\right)\right]+\int_{\tau\in\left[0,1\right]}d\left(\varphi_{\tau}-\eta_{\tau}\right)\,\log\pi\left(\varphi_{\tau}/\tau\right).\label{eq:2.1}
\end{multline}
By the uniform continuity condition one has $\left|\log\pi\left(\varphi_{\tau}/\tau\right)-\log\pi\left(\eta_{\tau}/\tau\right)\right|\leq f\left(\left|\varphi_{\tau}-\eta_{\tau}\right|/\tau\right)$.
Moreover, since $\varphi_{\tau}\leq\tau$ and $\eta_{\tau}\leq\tau$,
we have
\begin{equation}
\left|\varphi_{\tau}-\eta_{\tau}\right|\leq\min\left\{ \tau,\,\left\Vert \varphi-\eta\right\Vert \right\} ,
\end{equation}
and $d\varphi_{\tau}\leq d\tau$. Then, if we define $s^{-1}H_{f}\left(s\right):=\int_{s}^{1}dz\, f\left(z\right)/z^{2}$
the first integral can be bounded as follows
\begin{equation}
\int_{\tau\in\left[0,1\right]}d\varphi_{\tau}\,\left|\log\pi\left(\varphi_{\tau}/\tau\right)-\log\pi\left(\eta_{\tau}/\tau\right)\right|\leq\left\Vert \bar{\pi}\right\Vert ^{-1}H_{f}\left(\left\Vert \varphi-\eta\right\Vert \right),
\end{equation}
while for the second we get
\begin{equation}
\int_{\tau\in\left[0,1\right]}d\left(\varphi_{\tau}-\eta_{\tau}\right)\,\left|\log\pi\left(\varphi_{\tau}/\tau\right)\right|\leq\left\Vert \log\pi\right\Vert \left\Vert \varphi-\eta\right\Vert .
\end{equation}
Since by definition $H_{f}\left(s\right)$ is positive for $s\in\left(0,1\right]$,
and $\lim_{s\rightarrow0}H_{f}\left(s\right)=0$, we can take the
limit $\left\Vert \varphi-\eta\right\Vert \rightarrow0$. Repeating
the same steps for the second part, with $\log\bar{\pi}$ on place
of of $\log\pi$ and $\tilde{\varphi}$, $\tilde{\eta}$ on place
of of $\varphi$, $\eta$ will complete the proof.\end{proof}
\begin{lem}
\label{Lemma 5}Assume $\pi\in\mathcal{U}$ and $\pi\left(s\right)\in\left(0,1\right)$
for all $s\in\left[0,1\right]$, take some $\varphi\in\mathcal{Q}_{n}$,
and let $S_{\pi}:\mathcal{Q}\rightarrow\left(-\infty,0\right]$ as
in Eq. (\ref{eq:1.10}): then, $n^{-1}\log\mathbb{P}\left(\chi_{n}=\varphi\right)=S_{\pi}\left[\varphi\right]+O\left(W_{\pi}\left(1/n\right)\right)$,
with $W_{\pi}$ as in Lemma \ref{Lemma 5-1}.\end{lem}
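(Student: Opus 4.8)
The plan is to take the logarithm of the explicit path-probability (\ref{eq:2.1-1}), recognise the result as a Riemann--Stieltjes-type approximation to the integral defining $S_\pi[\varphi]$ in (\ref{eq:1.10}), and estimate the discrepancy by the same uniform-continuity argument used in the proof of Lemma \ref{Lemma 5-1}. For $\varphi\in\mathcal{Q}_n$ one reads off from (\ref{eq:2.10}) and (\ref{eq:2.9-1}) that $Y_{n,k}(\varphi)/k=\varphi_{k/n}/(k/n)$, that $n^{-1}\delta Y_{n,k}(\varphi)=\varphi_{(k+1)/n}-\varphi_{k/n}\in\{0,1/n\}$, and that $n^{-1}(1-\delta Y_{n,k}(\varphi))=\tilde\varphi_{(k+1)/n}-\tilde\varphi_{k/n}$. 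Taking $\log$ in (\ref{eq:2.1-1}) and dividing by $n$ therefore gives
\[
n^{-1}\log\mathbb{P}(\chi_n=\varphi)=\sum_{k=1}^{n-1}\Big[(\varphi_{(k+1)/n}-\varphi_{k/n})\log\pi\big(\tfrac{\varphi_{k/n}}{k/n}\big)+(\tilde\varphi_{(k+1)/n}-\tilde\varphi_{k/n})\log\bar\pi\big(\tfrac{\varphi_{k/n}}{k/n}\big)\Big].
\]
Since $\varphi\in\mathcal{Q}_n$ is piecewise affine with breakpoints in $n^{-1}\mathbb{Z}$, both $d\varphi_\tau$ and $d\tilde\varphi_\tau$ are constant multiples of Lebesgue measure on each interval $(k/n,(k+1)/n)$, so $S_\pi[\varphi]=\sum_{k=0}^{n-1}\int_{k/n}^{(k+1)/n}[\log\pi(\varphi_\tau/\tau)\,d\varphi_\tau+\log\bar\pi(\varphi_\tau/\tau)\,d\tilde\varphi_\tau]$. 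Subtracting, the $k=0$ summand of $S_\pi[\varphi]$ contributes at most $n^{-1}(\|\log\pi\|+\|\log\bar\pi\|)=O(1/n)$ (here $\pi$ takes values in a compact subinterval of $(0,1)$, and the variations of $\varphi,\tilde\varphi$ over $[0,1/n]$ are $\le 1/n$), while for $k\ge1$ the difference of the $k$-th terms is $\int_{k/n}^{(k+1)/n}[(\log\pi(\varphi_\tau/\tau)-\log\pi(\varphi_{k/n}/(k/n)))\,d\varphi_\tau+(\log\bar\pi(\varphi_\tau/\tau)-\log\bar\pi(\varphi_{k/n}/(k/n)))\,d\tilde\varphi_\tau]$.

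The quantitative input is that $\tau\mapsto\varphi_\tau/\tau$ varies little on $[k/n,(k+1)/n]$: there $\varphi$ is affine with slope $a\in\{0,1\}$, so the derivative of $\varphi_\tau/\tau$ equals $((k/n)a-\varphi_{k/n})/\tau^{2}$, of absolute value $\le (k/n)/(k/n)^{2}=n/k$, whence $|\varphi_\tau/\tau-\varphi_{k/n}/(k/n)|\le 1/k$ throughout the interval. Combining this with the uniform modulus bounds $|\log\pi(x+\delta)-\log\pi(x)|\le f(|\delta|)$, $|\log\bar\pi(x+\delta)-\log\bar\pi(x)|\le f(|\delta|)$ (valid after enlarging $f$ by a multiplicative constant and passing to its non-decreasing majorant, exactly as in Lemma \ref{Lemma 5-1}) and with $d\varphi_\tau\le d\tau$, $d\tilde\varphi_\tau\le d\tau$, the $k\ge1$ part of $|S_\pi[\varphi]-n^{-1}\log\mathbb{P}(\chi_n=\varphi)|$ is bounded by $2\sum_{k=1}^{n-1}n^{-1}f(1/k)$, uniformly in $\varphi\in\mathcal{Q}_n$.

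It remains to control this sum by $W_\pi(1/n)$ via a discrete-to-continuous comparison. Since $f$ is non-decreasing, $\int_{1/(k+1)}^{1/k}f(z)z^{-2}\,dz\ge f(1/(k+1))\,k^{2}\big(\tfrac1k-\tfrac1{k+1}\big)\ge\tfrac12 f(1/(k+1))$, so $\sum_{k=1}^{n-1}f(1/k)\le f(1)+2\int_{1/n}^{1}f(z)z^{-2}\,dz$, and therefore $2\sum_{k=1}^{n-1}n^{-1}f(1/k)\le 2n^{-1}f(1)+4H_f(1/n)$ with $H_f$ as in the proof of Lemma \ref{Lemma 5-1}. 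Recalling that there $W_\pi(s)$ is a positive constant times $H_f(s)+s$ and that $\|f\|<\infty$ (it is the modulus of a bounded function), this yields $|S_\pi[\varphi]-n^{-1}\log\mathbb{P}(\chi_n=\varphi)|=O(W_\pi(1/n))$ uniformly in $\varphi\in\mathcal{Q}_n$, which is the claim. The one genuinely delicate point is this final comparison: one must verify that the crude per-interval bound $f(1/k)$, once summed, is not merely $o(1)$ but is actually dominated by $W_\pi(1/n)$, and this is precisely where the hypothesis $\lim_{\epsilon\to0}\epsilon\int_\epsilon^{1}f(z)z^{-2}\,dz=0$ defining $\mathcal{U}$ is used; the treatment of the $k=0$ endpoint (and the associated index conventions in $\mathcal{Q}_n$) is a harmless bookkeeping issue.
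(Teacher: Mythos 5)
Your proof is correct and follows essentially the same route as the paper: the paper defines the perturbation $\epsilon_{n,\tau}=(n\tau/\left\lfloor n\tau\right\rfloor )\varphi_{\left\lfloor n\tau\right\rfloor /n}-\varphi_{\tau}$, notes $\left\Vert \epsilon_{n}\right\Vert \leq1/n$, and reuses the modulus estimate of Lemma \ref{Lemma 5-1} to get the $H_{f}(1/n)$-type bound, while you carry out the same Riemann-approximation estimate interval by interval (deviation of $\varphi_{\tau}/\tau$ of order $1/k$ on $[k/n,(k+1)/n]$) and recover $H_{f}(1/n)$ via a sum-versus-integral comparison. The monotonicity you impose on $f$ is used just as implicitly in the paper's own argument, so this is a shared, not a new, caveat.
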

\begin{proof}
Let $\varphi\in\mathcal{Q}_{n}$. To estimate the difference between
$n^{-1}\log\mathbb{P}\left(\chi_{n}=\varphi\right)$ and $S_{\pi}\left[\varphi\right]$
we can proceed as follows. First, we define 
\begin{equation}
\epsilon_{n}:=\left\{ \epsilon_{n,\tau}=\left(n\tau/\left\lfloor n\tau\right\rfloor \right)\varphi_{\left\lfloor n\tau\right\rfloor /n}-\varphi_{\tau}:\,\tau\in\left[0,1\right]\right\} ,
\end{equation}
such that the difference between $n^{-1}\log\mathbb{P}\left(\chi_{n}=\varphi\right)$
and $S_{\pi}\left[\varphi\right]$ can be written as follows
\begin{multline}
n^{-1}\log\mathbb{P}\left(\chi_{n}=\varphi\right)-S_{\pi}\left[\varphi\right]=\int_{\tau\in\left[0,1\right]}d\varphi_{\tau}\,\left[\log\pi\left(\left(\varphi_{\tau}+\epsilon_{n,\tau}\right)/\tau\right)-\log\pi\left(\varphi_{\tau}/\tau\right)\right]+\\
+\int_{\tau\in\left[0,1\right]}d\tilde{\varphi}_{\tau}\,\left[\log\bar{\pi}\left(\left(\varphi_{\tau}+\epsilon_{n,\tau}\right)/\tau\right)-\log\bar{\pi}\left(\varphi_{\tau}/\tau\right)\right],\label{eq:2.0-1}
\end{multline}
Even if $\epsilon_{n}$ is discontinuous at each $\tau=\left\lfloor n\tau\right\rfloor /n$,
it still satisfies the condition $\epsilon_{n,\tau}\leq\min\left\{ \tau,\,\left\Vert \epsilon_{n,\tau}\right\Vert \right\} $.
Then, we can proceed as in Lemma \ref{Lemma 5-1}. First consider
the $\log\pi$ dependent integral. 
\begin{equation}
\int_{\tau\in\left[0,1\right]}d\varphi_{\tau}\,\left|\log\pi\left(\left(\varphi_{\tau}+\epsilon_{n,\tau}\right)/\tau\right)-\log\pi\left(\varphi_{\tau}/\tau\right)\right|\leq\left\Vert \bar{\pi}\right\Vert ^{-1}H_{f}\left(\left\Vert \epsilon_{n,\tau}\right\Vert \right).
\end{equation}
Since $\left\Vert \epsilon_{n,\tau}\right\Vert \leq1/n$ we conclude
that $H_{f}\left(\left\Vert \epsilon_{n,\tau}\right\Vert \right)\leq H_{f}\left(1/n\right)$.
Repeating the same steps for the $\log\bar{\pi}$ integral of Eq.
(\ref{eq:2.1-1}) completes the proof .
\end{proof}
Let us now introduce the binomial urn process $B_{n}:=\left\{ B_{n,k}:\,1\leq k\leq n\right\} $,
with constant urn function $\pi\left(s\right)=1/2$ and $B_{n,1}$
uniformly distributed on $\left[0,1\right]$. We define $\delta B_{n,k}:=B_{n,k+1}-B_{n,k}$.
The process $\delta B_{n}$ is a sequence of binary i.i.d. random
variables with $\mathbb{P}\left(\delta B_{n,k}=1\right)=\mathbb{P}\left(\delta B_{n,k}=0\right)=1/2$,
so that each $Y_{n}\left(\varphi\right)$, $\varphi\in\mathcal{Q}_{n}$
realization of $B_{n}$ up to time $n$ has constant measure $\mathbb{P}\left(B_{n}=Y_{n}\left(\varphi\right)\right)=2^{-n}$.
We denote by $\varphi_{n}:\left[0,1\right]\rightarrow\left[0,1\right]$
the linear interpolation of the $n^{-1}B_{k}$ sequence for $0\leq k\leq n$:
\begin{equation}
\beta_{n}:=\left\{ \beta_{n,\tau}=n^{-1}\left[B_{n,\left\lfloor n\tau\right\rfloor }+\left(n\tau-\left\lfloor n\tau\right\rfloor \right)\delta B_{n,\left\lfloor n\tau\right\rfloor }\right]:\tau\in\left[0,1\right]\right\} .\label{eq:2.9}
\end{equation}
Note that $\beta_{n}\in\mathcal{Q}_{n}\subset\mathcal{Q}$ for all
$n$. A sample-path LDP for the sequence of functions $\left\{ \beta_{n}\,:\, n\in\mathbb{N}\right\} $
is provided by the Mogulskii Theorem \cite{Dembo Zeitouni}.
\begin{lem}
\label{Lemma 3}The sequence $\left\{ \beta_{n}\,:\, n\in\mathbb{N}\right\} $
defined by Eq.(\ref{eq:2.9}) with support $\mathcal{\mathcal{Q}}$
satisfies a LDP in $\left(\mathcal{Q},\left\Vert \cdot\right\Vert \right)$,
with the good rate function 
\begin{equation}
I_{1/2}\left[\varphi\right]=\left\{ \begin{array}{l}
\log2+\int_{0}^{1}d\tau\, H\left(\dot{\varphi}_{\tau}\right)\\
\infty
\end{array}\ \begin{array}{l}
if\ \varphi\in\mathcal{AC}\\
otherwise,
\end{array}\right.
\end{equation}
where $\mathcal{AC}$ is the class of absolutely continuous functions,
and $H\left(s\right)=s\log s+\bar{s}\log\bar{s}$ as in Theorem \ref{Theorem 1}. \end{lem}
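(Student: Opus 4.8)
The plan is to deduce Lemma~\ref{Lemma 3} from the Mogulskii Theorem (Theorem~5.1.2 of \cite{Dembo Zeitouni}) once the (random but uniformly bounded) first ball has been absorbed and a single Fenchel--Legendre transform has been computed. First I would reduce $\beta_n$ to a genuine i.i.d.\ partial-sum process. For $1\le k\le n-1$ the increments $\delta B_{n,k}$ are i.i.d.\ with $\mathbb{P}(\delta B_{n,k}=1)=\mathbb{P}(\delta B_{n,k}=0)=1/2$, and the only non-Bernoulli ingredient of $\beta_n$ is the term $n^{-1}B_{n,1}$ produced by $B_{n,1}$ uniform on $[0,1]$ (equivalently, the first increment $\delta B_{n,0}=B_{n,1}$ lies in $[0,1]$ rather than in $\{0,1\}$). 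Letting $\hat\beta_n$ be the polygonal interpolation of the normalized partial sums $n^{-1}\sum_{j\le k}\xi_j$ of an i.i.d.\ Bernoulli$(1/2)$ sequence $(\xi_j)_{j\ge 1}$ (so $\hat\beta_{n,0}=0$), one has $\Vert\beta_n-\hat\beta_n\Vert=O(1/n)$ uniformly on the sample space; since $\beta_n$ and $\hat\beta_n$ are both continuous, non-decreasing, $1$-Lipschitz and vanish at $\tau=0$, they are $\mathcal{Q}$-valued and exponentially equivalent (Theorem~4.2.13 of \cite{Dembo Zeitouni}). It therefore suffices to establish the stated LDP for $\hat\beta_n$.

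Next, the Mogulskii Theorem applies to $\hat\beta_n$ because the logarithmic moment generating function $\Lambda(\lambda)=\log\mathbb{E}(e^{\lambda\xi_1})=\log\frac{1+e^\lambda}{2}$ is finite for every $\lambda\in\mathbb{R}$; it yields an LDP for $\hat\beta_n$ in $L_\infty([0,1])$ with the supremum-norm topology and good rate function $\int_0^1\Lambda^*(\dot\varphi_\tau)\,d\tau$ for $\varphi\in\mathcal{AC}$ with $\varphi_0=0$, and $+\infty$ otherwise, where $\Lambda^*$ is the Fenchel--Legendre transform of $\Lambda$. The key computation is the identification of $\Lambda^*$: for $x\in(0,1)$ the stationarity condition $\Lambda'(\lambda)=e^\lambda/(1+e^\lambda)=x$ gives $e^\lambda=x/(1-x)$, hence $\Lambda^*(x)=x\log\frac{x}{1-x}-\log\frac{1}{2(1-x)}=x\log x+(1-x)\log(1-x)+\log 2=H(x)+\log 2$; the endpoint values $\Lambda^*(0)=\Lambda^*(1)=\log 2=H(0)+\log 2$ follow by letting $\lambda\to\mp\infty$, while $\Lambda^*(x)=+\infty$ for $x\notin[0,1]$ because $\Lambda'$ maps $\mathbb{R}$ onto $(0,1)$. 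Substituting, the rate function of $\hat\beta_n$ in $L_\infty([0,1])$ is $\log 2+\int_0^1 H(\dot\varphi_\tau)\,d\tau$ when $\dot\varphi_\tau\in[0,1]$ for a.e.\ $\tau$ and $\varphi_0=0$, and $+\infty$ otherwise.

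Finally I would transport this LDP to $(\mathcal{Q},\Vert\cdot\Vert)$. Since $\Lambda^*(\dot\varphi_\tau)=+\infty$ unless $\dot\varphi_\tau\in[0,1]$ a.e., the rate function is finite precisely on the non-decreasing $1$-Lipschitz functions vanishing at $0$ (all of which are automatically absolutely continuous), that is, precisely on $\mathcal{Q}$; moreover $\hat\beta_n$ takes values in $\mathcal{Q}$, which is closed in $L_\infty([0,1])$. Restricting the large deviation principle to the closed, full-measure subset $\mathcal{Q}$ (immediate from the definition of the LDP) gives the LDP for $\hat\beta_n$ in $(\mathcal{Q},\Vert\cdot\Vert)$ with rate function $I_{1/2}[\varphi]=\log2+\int_0^1 H(\dot\varphi_\tau)\,d\tau$; since $\mathcal{Q}$ is compact in the supremum-norm topology, all level sets of $I_{1/2}$ are compact, so $I_{1/2}$ is a good rate function, and by exponential equivalence the same LDP holds for $\beta_n$. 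The only delicate points here are bookkeeping ones --- the random first ball and the passage from $L_\infty([0,1])$ to $\mathcal{Q}$ --- which are handled by the elementary observations above; the substance of the lemma is entirely Mogulskii's Theorem together with the one-line Legendre computation, so I do not expect any serious obstacle.
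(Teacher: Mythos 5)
Your proposal is correct and follows essentially the same route as the paper: an application of Mogulskii's Theorem to the Bernoulli$(1/2)$ increments together with the one-line Fenchel--Legendre computation $\Lambda^{*}(x)=H(x)+\log 2$. The extra steps you supply (exponential equivalence to absorb the uniform first ball $n^{-1}B_{n,1}$, and the restriction of the LDP from $L_{\infty}([0,1])$ to the closed full-measure set $\mathcal{Q}$) are careful elaborations of points the paper's proof passes over silently, not a different argument.
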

\begin{proof}
Since $\beta_{n}\in\mathcal{\mathcal{Q}}\subset L_{\infty}\left(\left[0,1\right]\right)$,
Mogulskii Theorem \cite{Dembo Zeitouni} predicts a LDP for the sequence
$\left\{ \beta_{n}\,:\, n\in\mathbb{N}\right\} $, with good rate
function $I_{1/2}\left[\varphi\right]=-\int_{0}^{1}d\tau\hat{\Lambda}\left(\dot{\varphi}_{\tau}\right)$
if $\varphi\in\mathcal{AC}$ and $I_{1/2}\left[\varphi\right]=\infty$
otherwise, and where $\hat{\Lambda}\left(s\right)$ is the Frenchel-Legendre
transform of the moment generating function $\Lambda\left(\lambda\right):=\mathbb{E}\left[\exp\left(\lambda\,\delta Y_{n,1}\right)\right]$.
In our case we have $\Lambda\left(\lambda\right)=\left(e^{\lambda}+1\right)/2$,
then $\hat{\Lambda}\left(s\right)=-\log2-H\left(s\right)$.
\end{proof}

\subsubsection{Proof of Theorem \ref{Theorem 1} for $\pi\in\left(0,1\right)$.}

\label{Section3.1.2}Here we show the theorem for $\pi\in\left(0,1\right)$.
We will use a corollary of the Varadhan Integral Lemma (Lemmas 4.3.2
and 4.3.4 of \cite{Dembo Zeitouni}) to prove the sample-path LDP
for the $\chi_{n}$ sequence stated in Theorem \ref{Theorem 1}.
\begin{proof}
Let $I_{\pi}\left[\varphi\right]:=J\left[\varphi\right]-S_{\pi}\left[\varphi\right]$
and let $\mathcal{B}$ be a subset of $\mathcal{Q}$: we define the
following $\mathcal{B}-$dependent functional: 
\begin{equation}
S_{\pi,\mathcal{B}}\left[\varphi\right]:=\left\{ \begin{array}{l}
S_{\pi}\left[\varphi\right]=J\left[\varphi\right]-I_{\pi}\left[\varphi\right]\\
-\infty
\end{array}\ \begin{array}{l}
if\ \varphi\in\mathcal{B}\\
otherwise.
\end{array}\right.
\end{equation}
and denote by $\mathbb{E}_{0}$ the expectation over the possible
realizations of the binomial process $\beta_{n}$. By equation (\ref{eq:2.1-1})
and Lemma \ref{Lemma 5} we find that 
\begin{multline}
\lim_{n\rightarrow\infty}n^{-1}\log\mathbb{P}\left(\chi_{n}\in\mathcal{B}\right)=\log2+\lim_{n\rightarrow\infty}n^{-1}\log\mathbb{E}_{0}\left(e^{nS_{\pi}\left[\beta_{n}\right]}\mathbb{I}_{\left\{ \beta_{n}\in\mathcal{B}\right\} }\right)=\\
=\log2+\lim_{n\rightarrow\infty}n^{-1}\log\mathbb{E}_{0}\left(e^{nS_{\pi,\mathcal{\mathcal{B}}}\left[\beta_{n}\right]}\right).\label{eq:-3}
\end{multline}
Then, consider $S_{\pi,\mathcal{\mathrm{cl}\left(\mathcal{B}\right)}}$:
since $\mathrm{cl}\left(\mathcal{B}\right)$ is a closed set and Lemma
\ref{Lemma 5-1} states that $S_{\pi}$ is a continuous functional
on $\left(\mathcal{Q},\left\Vert \cdot\right\Vert \right)$ it follows
that $S_{\pi,\mathcal{\mathrm{cl}\left(\mathcal{B}\right)}}$ is upper
semicontinuous on $\left(\mathcal{Q},\left\Vert \cdot\right\Vert \right)$,
and Lemma 4.3.2 of \cite{Dembo Zeitouni} gives the upper bound
\begin{multline}
\log2+\limsup_{n\rightarrow\infty}\, n^{-1}\log\mathbb{E}_{0}\left(e^{nS_{\pi,\mathcal{\mathrm{cl}\left(\mathcal{B}\right)}}\left[\beta_{n}\right]}\right)\leq\log2+\sup_{\varphi\in\mathcal{Q}}\left\{ S_{\pi,\mathcal{\mathrm{cl}\left(\mathcal{B}\right)}}\left[\varphi\right]-I_{1/2}\left[\varphi\right]\right\} =\\
=\log2+\sup_{\varphi\in\mathcal{\mathrm{cl}\left(\mathcal{B}\right)}}\left\{ S_{\pi}\left[\varphi\right]-\log2-J\left[\varphi\right]\right\} =-\inf_{\varphi\in\mathcal{\mathrm{cl}\left(\mathcal{B}\right)}}I_{\pi}\left[\varphi\right].\label{eq:2.18}
\end{multline}
 Now consider $S_{\pi,\mathrm{int}\left(\mathcal{B}\right)}$: $\mathrm{int}\left(\mathcal{B}\right)$
is open and this time we have a lower semicontinuous functional on
$\left(\mathcal{Q},\left\Vert \cdot\right\Vert \right)$, then by
Lemma 4.3.3 of \cite{Dembo Zeitouni} we can write 
\begin{equation}
\log2+\liminf_{n\rightarrow\infty}\, n^{-1}\log\mathbb{E}_{0}\left(e^{nS_{\pi,\mathrm{int}\left(\mathcal{B}\right)}\left[\beta_{n}\right]}\right)\geq-\inf_{\varphi\in\mathrm{int}\left(\mathcal{B}\right)}I_{\pi}\left[\varphi\right].
\end{equation}
which completes the main statement of Theorem \ref{Theorem 1} under
the assumption that $\pi\in\left(0,1\right)$.
\end{proof}
\begin{onehalfspace}
\begin{figure}
\begin{singlespace}
\centering{}\includegraphics[scale=0.45]{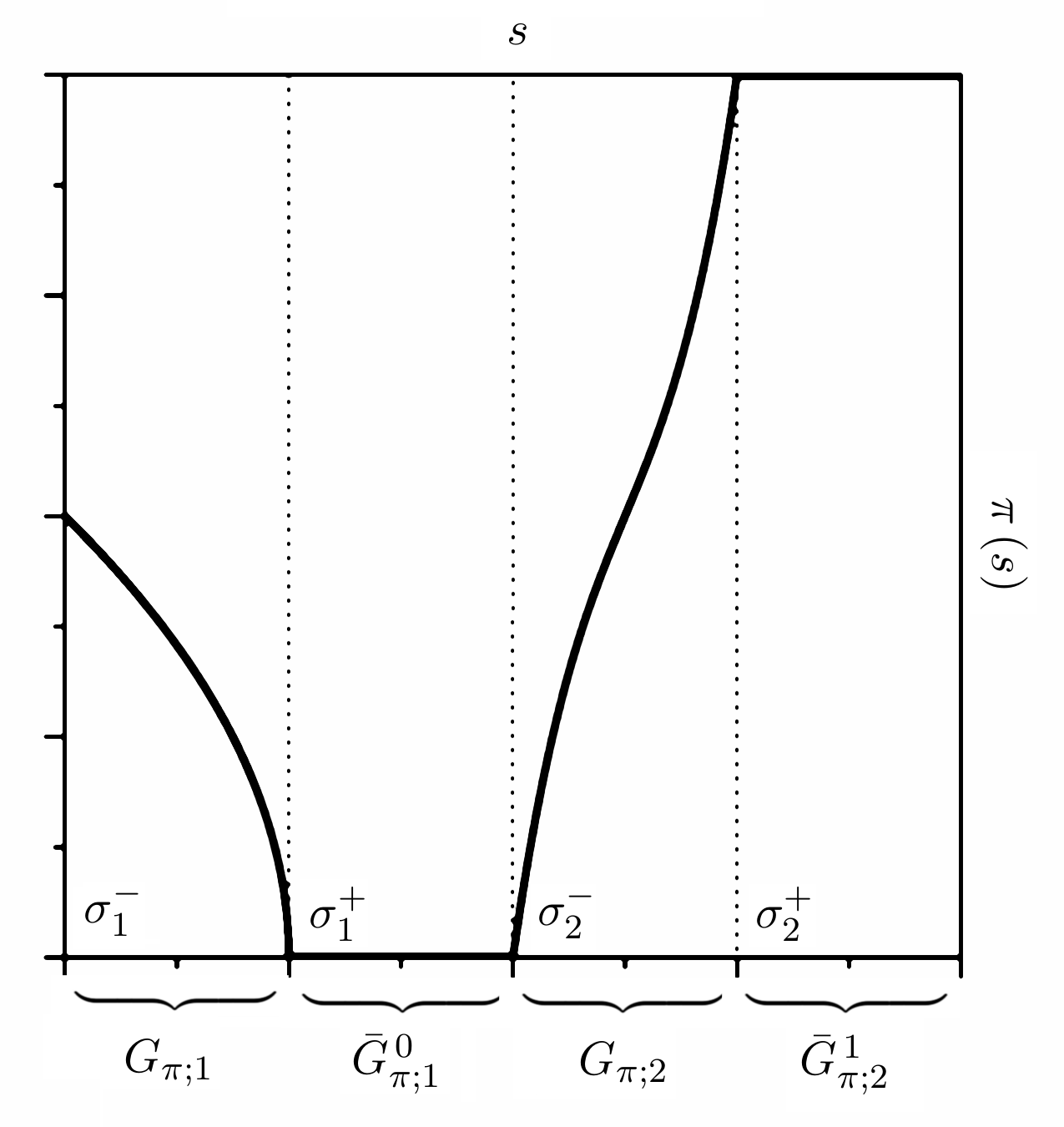}\\
~~\includegraphics[scale=0.45]{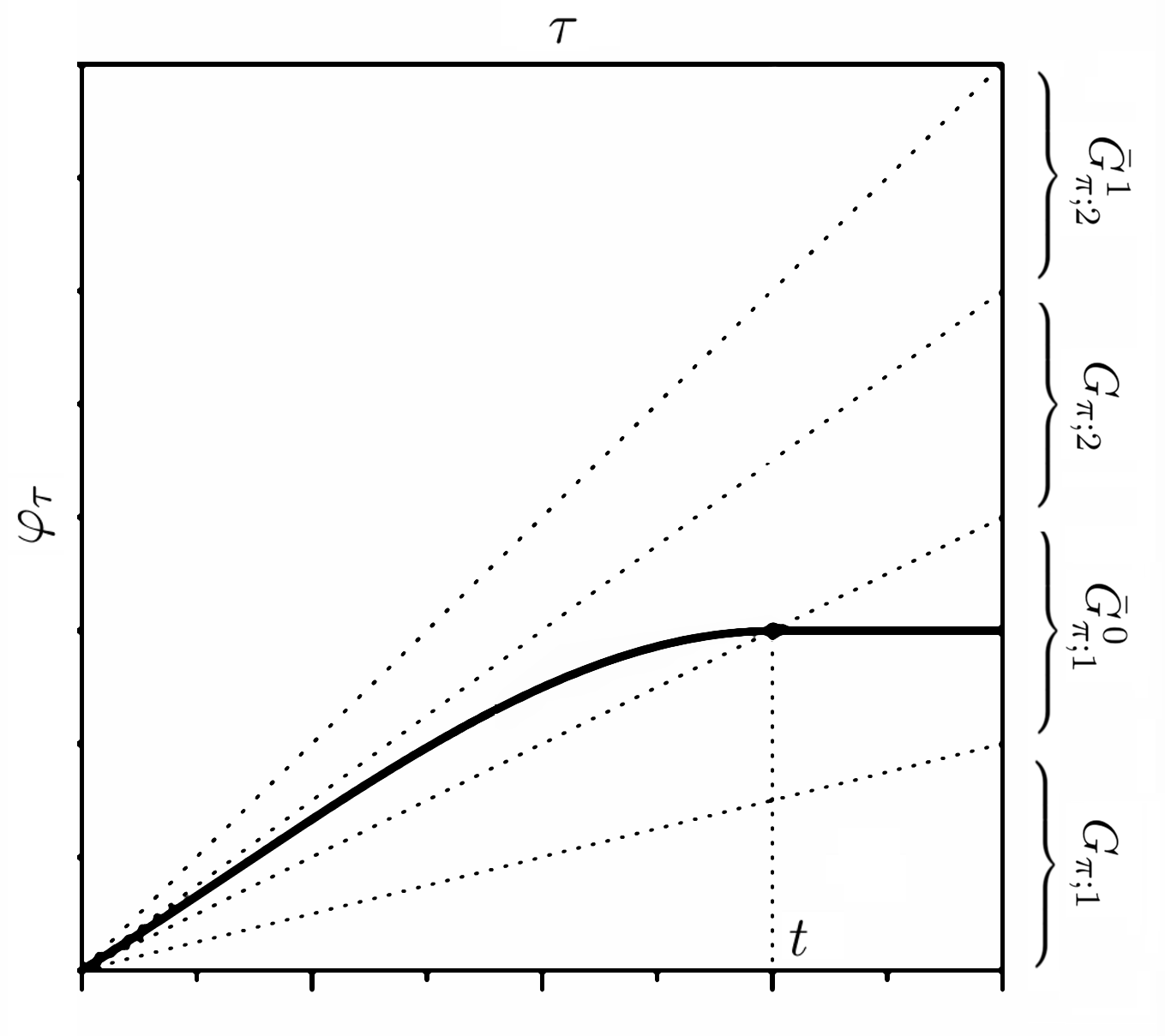}\caption{\label{fig:5}\textit{Example of urn function $\pi$ with relative
$G_{\pi;i}$, $\bar{G}_{\pi;i}^{\alpha}$ intervals (upper figure)
and trajectory with $\lim_{\tau\rightarrow0}\tau^{-1}\varphi_{\tau}\in G_{\pi,2}$,
$\varphi_{1}\in\bar{G}_{\pi;1}^{\,0}$, $S_{\pi}\left[\varphi\right]>-\infty$
(lower figure).}}
\end{singlespace}
\end{figure}

\end{onehalfspace}

\subsubsection{Extension to $\pi\in\left[0,1\right]$: surgery over $\mathcal{Q}$. }

\label{Section3.1.3}When we allow $\pi\left(s\right)$ to be eventually
$0$ or $1$ quantities like $\left\Vert \pi\right\Vert ^{-1}$, $\left\Vert \bar{\pi}\right\Vert ^{-1}$,
$\left\Vert \log\pi\right\Vert $, $\left\Vert \log\bar{\pi}\right\Vert $
may not be bounded and Lemmas \ref{Lemma 5-1} and \ref{Lemma 5}
don't hold anymore. Here we show that we can recover these two lemmas
by a suitable surgery over the set $\mathcal{Q}$ to a priori exclude
those trajectories for which $S_{\pi}\left[\varphi\right]=-\infty$. 
\begin{proof}
The key point is to notice that any $\varphi$ for which $\pi\left(\varphi_{\tau}/\tau\right)=0$
for $\tau\in\left[\tau_{1},\tau_{2}\right]$ with $\left|\tau_{1}-\tau_{2}\right|>0$
gives $S_{\pi}\left[\varphi\right]=-\infty$ unless $d\varphi_{\tau}=0$,
or $d\varphi_{\tau}=1$ if $\pi\left(\varphi_{\tau}/\tau\right)=1$,
in the same $\tau$ interval. To formally explain this we need some
notation. Then, define 
\begin{equation}
G_{\pi}:=\left\{ s\in\left[0,1\right]:\,\pi\left(s\right)\in\left(0,1\right)\right\} ,\,\partial G_{\pi}:=\mathrm{cl}\left(G_{\pi}\right)\setminus\mathrm{int}\left(G_{\pi}\right)\label{eq:FrontierPi}
\end{equation}
and organize the elements of $\partial G_{\pi}$ by increasing order
by labeling them as follows: 
\begin{equation}
\partial G_{\pi}=:\left\{ \sigma_{1}^{-},\sigma_{1}^{+},\sigma_{2}^{-},\sigma_{2}^{+},\,...\,,\,\sigma_{N}^{-},\sigma_{N}^{+}:\,\sigma_{i}^{-}<\sigma_{i}^{+},\sigma_{i}^{+}\leq\sigma_{i+1}^{-}\right\} 
\end{equation}
The above notation allows to define the sequence of intervals
\begin{equation}
G_{\pi;i}:=\left(\sigma_{i}^{-},\sigma_{i}^{+}\right),\,1\leq i\leq N_{g},
\end{equation}
such that $\pi\left(s\right)\in\left(0,1\right)$ for any $s\in G_{\pi;i}:=\left(\sigma_{i}^{-},\sigma_{i}^{+}\right)$
and $G_{\pi}:=\bigcup_{i}G_{\pi;i}$. We can also define the complementary
sequence 
\begin{multline}
\bar{G}_{\pi;0}^{\,\alpha_{0}}:=\left[0,\sigma_{1}^{-}\right],\,\bar{G}_{\pi;i}^{\,\alpha_{i}}:=\left[\sigma_{i}^{+},\sigma_{i+1}^{-}\right],\,\bar{G}_{\pi;N_{g}}^{\,\alpha_{N_{g}}}:=\left[\sigma_{N}^{-},1\right]:\,\alpha_{i}\in\left\{ 0,1\right\} ,\,1\leq i\leq N_{g},\label{eq:Gbar}
\end{multline}
where $\alpha_{i}=\pi\left(s\right)$ for $s\in\left[\sigma_{i}^{+},\sigma_{i+1}^{-}\right]$,
which is $0$ or $1$ by definition. By convention we take $\bar{G}_{\pi;0}^{\,\alpha_{0}}=\textrm{�}$
if $\pi\left(0\right)\in\left(0,1\right)$ and $\bar{G}_{\pi;N_{g}}^{\,\alpha_{N_{g}}}=\textrm{�}$
if $\pi\left(1\right)\in\left(0,1\right)$, and call by 
\begin{equation}
\alpha_{\pi}:=\left\{ \alpha_{i}:\,0\leq i\leq N_{g}\right\} 
\end{equation}
the sequence of the $\alpha_{i}$. Clearly if $\alpha_{0}$ and $\alpha_{N_{g}}$
are not well defined we can exclude them from the above sequence and
take $1\leq i\leq N_{g}-1$.

First we notice that every $\varphi$ such that $\tau^{-1}\varphi_{\tau}\in\bar{G}_{\pi;i}^{\,1}$,
$d\varphi_{\tau}<1$ or $\tau^{-1}\varphi_{\tau}\in\bar{G}_{\pi;0}^{\,0}$,
$d\varphi_{\tau}>0$ in some interval $\tau\in\left[\tau_{1},\tau_{2}\right]$
with $\left|\tau_{1}-\tau_{2}\right|>0$ gives $S_{\pi}\left[\varphi\right]=-\infty$.
Then, we can discard all these cases and restrict our attention to
the following subsets of $\mathcal{Q}$. The simplest subclasses of
$\mathcal{Q}$ for which $S_{\pi}\left[\varphi\right]$ can be a bounded
quantity are those where our $\varphi\in\mathcal{Q}$ is such that
$\tau^{-1}\varphi_{\tau}\in G_{\pi;i}:=\left(\sigma_{i}^{-},\sigma_{i}^{+}\right)$
\begin{equation}
\mathcal{Q}\left[G_{\pi;i}\right]:=\left\{ \varphi\in\mathcal{Q}:\,\tau^{-1}\varphi_{\tau}\in G_{\pi;i}\right\} .
\end{equation}
Anyway, we can build more functions that lives on contiguous intervals
by taking $d\varphi_{\tau}=0$ when $\tau^{-1}\varphi_{\tau}\in\bar{G}_{\pi;i}^{\,0}$
or $d\varphi_{\tau}=d\tau$ when $\tau^{-1}\varphi_{\tau}\in\bar{G}_{\pi;i}^{\,1}$.
As example, consider the subset of $\mathcal{Q}$ such that $\tau^{-1}\varphi_{\tau}\in\bar{G}_{\pi;i-1}^{\,0}\cup\, G_{\pi;i}$,
$\lim_{\tau\rightarrow0}\tau^{-1}\varphi_{\tau}\in G_{\pi,i}$ and
$\varphi_{1}\in\bar{G}_{\pi;i-1}^{\,0}$: we can take $\varphi\in\mathcal{Q}$
such that $\sigma_{i}^{-}<\tau^{-1}\varphi_{\tau}<\sigma_{i}^{+}$
until some time $t\in\left(0,1\right)$, then $\varphi_{\tau}=\sigma_{i}^{+}$
for $t\leq\tau\leq1$, with the obvious requirement that $t\geq\sigma_{i}^{+}/\sigma_{i-1}^{-}$
to ensure that $\varphi_{1}\in\bar{G}_{\pi;i-1}^{\,0}$ (see Figure
\ref{fig:5}). In the above trajectory the time interval $\left(t,1\right)$
in which $\log\pi\left(\tau^{-1}\varphi_{\tau}\right)=-\infty$ also
have $d\varphi_{\tau}=0$, so that its contribution to the total value
of $S_{\pi}$ is null. 
\begin{equation}
\int_{\tau\in\left[t,1\right]}\left[\, d\varphi_{\tau}\,\log\pi\left(\varphi_{\tau}/\tau\right)+d\tilde{\varphi}_{\tau}\,\log\bar{\pi}\left(\varphi_{\tau}/\tau\right)\right]=0.
\end{equation}
The same can be done if $\alpha=1$ and $\tau^{-1}\varphi_{\tau}\in G_{\pi;i}\cup\,\bar{G}_{\pi;i}^{\,1}$
(ie, if $\lim_{\tau\rightarrow0}\tau^{-1}\varphi_{\tau}\in G_{\pi,i}$
and $\varphi_{1}\in\bar{G}_{\pi;i}^{\,1}$): in this case we will
chose $\sigma_{i}^{-}<\tau^{-1}\varphi_{\tau}<\sigma_{i}^{+}$ until
some time $t\in\left[0,1\right]$, then $\varphi_{\tau}=\sigma_{i}^{+}t+\left(\tau-t\right)$
for $t\leq\tau\leq1$ with $t\geq\left(1-\sigma_{i+1}^{-}\right)/\left(1-\sigma_{i}^{+}\right)$.
In general, we can build functions that lives in arbitrary unions
of contiguous intervals, as example $G_{\pi;i}\cup\,\bar{G}_{\pi;i}^{\,\alpha_{i}}\cup\, G_{\pi;i+1}\cup\,\bar{G}_{\pi;i+1}^{\,\alpha_{i+1}}\,...\,\cup\,\bar{G}_{\pi;j}^{\,\alpha_{j}}\cup\, G_{\pi;j+1}$,
provided that $\alpha_{i}=\alpha_{i+1}=\,...\,=\alpha_{j}$. To give
a general characterization of those functions define the following
groups of intervals
\begin{equation}
G_{\pi;i,j}^{\,0}:=\left\{ G_{\pi;i},\,\bar{G}_{\pi;i}^{\,0},\, G_{\pi;i+1},\,\bar{G}_{\pi;i+1}^{\,0},\,...\,,\bar{G}_{\pi;j-1}^{\,0},\, G_{\pi;j}\right\} ,\label{eq:G1}
\end{equation}
\begin{equation}
G_{\pi;i,j}^{\,1}:=\left\{ G_{\pi;i},\,\bar{G}_{\pi;i}^{\,1},\, G_{\pi;i+1},\,\bar{G}_{\pi;i+1}^{\,1},\,...\,,\bar{G}_{\pi;j-1}^{\,1},\, G_{\pi;j}\right\} ,\label{eq:G2}
\end{equation}
\begin{equation}
\bar{G}_{\pi;i,j}^{\,0}:=\left\{ \bar{G}_{\pi;i-1}^{0},G_{\pi;i},\,\bar{G}_{\pi;i}^{\,0},\, G_{\pi;i+1},\,...\,,\bar{G}_{\pi;j-1}^{\,0},\, G_{\pi;j}\right\} ,\label{eq:G3}
\end{equation}
\begin{equation}
\bar{G}_{\pi;i,j}^{\,1}:=\left\{ G_{\pi;i},\,\bar{G}_{\pi;i}^{\,1},\, G_{\pi;i+1},\,.....\,,\bar{G}_{\pi;j-1}^{\,1},\, G_{\pi;j},\,\bar{G}_{\pi;j}^{1}\right\} .\label{eq:G4}
\end{equation}
From each of the above groups of intervals we can define a subset
of $\mathcal{Q}$ as follows. First consider $G_{\pi;i,j}^{\,0}$,
take some $s\in G_{\pi;i}$ and denote by $T_{i,j}$ a general time
sequence
\begin{equation}
T_{i,j}:=\left\{ t_{k}\in\left[0,1\right]:\, i\leq k\leq j\right\} .
\end{equation}
Then we can define a set of $T_{i,j}$ sequences 
\begin{equation}
T_{s}\left[G_{\pi;i,j}^{\,0}\right]:=\left\{ T_{i+1,j}:\,0<\left(\sigma_{k}^{-}/\sigma_{k-1}^{+}\right)t{}_{k}\leq t_{k-1}\leq\left(s/\sigma_{i+1}^{-}\right)\right\} 
\end{equation}
and the associated set of trajectories $\mathcal{Q}_{s}[G_{\pi;i,j}^{\,0},\, T_{i+1,j}]\subseteq\mathcal{Q}$
\begin{multline}
\mathcal{Q}_{s}\left[G_{\pi;i,j}^{\,0},\, T_{i+1,j}\right]:=\{\varphi\in\mathcal{Q}:\, i+1\leq k\leq j-1;\,\varphi_{1}=s;\\
\tau^{-1}\varphi_{\tau}\in G_{\pi;j},\,\tau\in\left[0,t_{j}\right];\,\varphi_{\tau}=\sigma_{k+1}^{-}t_{k+1},\,\tau\in\left[t_{k+1},t'_{k+1}\right];\\
\tau^{-1}\varphi_{\tau}\in G_{\pi;k},\,\tau\in\left[t'_{k+1},t_{k}\right];\,\varphi_{\tau}=\sigma_{k}^{-}t_{k},\,\tau\in\left[t_{k},t'_{k}\right];\\
\tau^{-1}\varphi_{\tau}\in G_{\pi;i},\,\tau\in\left[t'_{i+1},1\right];\, t_{k}^{'}:=\left(\sigma_{k}^{-}/\sigma_{k-1}^{+}\right)t_{k}\},\label{eq:Q1}
\end{multline}
with $\lim_{\tau\rightarrow0}\tau^{-1}\varphi_{\tau}\in G_{\pi;j}$
and ending in $\varphi_{1}=s\in G_{\pi;j}$. At this point we can
define 
\begin{equation}
\mathcal{Q}\left[G_{\pi;i,j}^{\,0}\right]:=\bigcup_{s\in G_{\pi;i}}\ \bigcup_{T_{i+1,j}\in\, T_{s}\left[G_{\pi;i,j}^{\,0}\right]}\ {\textstyle \mathcal{Q}_{s}\,[G_{\pi;i,j}^{\,0},\, T_{i+1,j}]},\label{eq:QG1}
\end{equation}
which is the set of trajectories with $\lim_{\tau\rightarrow0}\tau^{-1}\varphi_{\tau}\in G_{\pi;j}$
and $\varphi_{1}\in G_{\pi;i}$ for which $S_{\pi}\left[\varphi\right]$
may still be a bounded quantity. We can do the same for the remaining
classes of sets. For $G_{\pi;i,j}^{\,1}$ we take $s\in G_{\pi;j}$,
define 
\begin{equation}
T_{s}\left[G_{\pi;i,j}^{\,1}\right]:=\left\{ T_{i,j-1}:\,0<\left(\bar{\sigma}_{k}^{-}/\bar{\sigma}_{k+1}^{+}\right)t{}_{k}\leq t_{k+1}\leq\left(\bar{s}/\bar{\sigma}_{i+1}^{-}\right)\right\} ,
\end{equation}
\begin{multline}
\mathcal{Q}_{s}\left[G_{\pi;i,j}^{\,1},\, T_{i,j-1}\right]:=\{\varphi\in\mathcal{Q}:\, i-1\leq k\leq j+1;\,\varphi_{1}=s;\\
\tau^{-1}\varphi_{\tau}\in G_{\pi;i},\,\tau\in\left[0,t_{i}\right];\,\varphi_{\tau}=\tau-\bar{\sigma}_{k-1}^{+}t_{k-1},\,\tau\in\left[t_{k-1},t'_{k-1}\right];\\
\tau^{-1}\varphi_{\tau}\in G_{\pi;k},\,\tau\in\left[t'_{k-1},t_{k}\right];\,\varphi_{\tau}=\tau-\bar{\sigma}_{k}^{+}t_{k},\,\tau\in\left[t_{k},t'_{k}\right];\\
\tau^{-1}\varphi_{\tau}\in G_{\pi;j},\,\tau\in\left[t'_{j-1},1\right];\, t_{k}^{'}:=\left(\bar{\sigma}_{k}^{+}/\bar{\sigma}_{k+1}^{-}\right)t_{k}\},\label{eq:Q2}
\end{multline}
to obtain set of trajectories with $\lim_{\tau\rightarrow0}\tau^{-1}\varphi_{\tau}\in G_{\pi;i}$
and $\varphi_{1}\in G_{\pi;j}$
\begin{equation}
\mathcal{Q}\left[G_{\pi;i,j}^{\,1}\right]:=\bigcup_{s\in G_{\pi;i}}\ \bigcup_{T_{i+1,j}\in\, T_{s}\left[G_{\pi;i,j}^{\,1}\right]}\ {\textstyle \mathcal{Q}_{s}\,[G_{\pi;i,j}^{\,1},\, T_{i,j-1}]}\label{eq:QG2}
\end{equation}
associated to $G_{\pi;i,j}^{\,1}$. Then we take some $s\in\bar{G}_{\pi;i-1}^{\,0}$,
define 
\begin{equation}
T_{s}\left[\bar{G}_{\pi;i,j}^{\,0}\right]:=\left\{ T_{i,j}:\,0\leq t_{k}\leq\left(\sigma_{k}^{-}/\sigma_{k-1}^{+}\right)t{}_{k}\leq t_{k-1}<1;\, t_{i}=\left(s/\sigma_{i}^{-}\right)\right\} ,
\end{equation}
\begin{multline}
\mathcal{Q}_{s}\left[\bar{G}_{\pi;i,j}^{\,0},\, T_{i,j}\right]:=\{\varphi\in\mathcal{Q}:\, i+1\leq k\leq j;\,\tau^{-1}\varphi_{\tau}\in G_{\pi;j},\,\tau\in\left[0,t_{j}\right];\\
\varphi_{\tau}=\sigma_{k}^{-}t_{k},\,\tau\in\left[t_{k},t'_{k}\right];\,\tau^{-1}\varphi_{\tau}\in G_{\pi;k-1},\,\tau\in\left[t'_{k},t_{k-1}\right];\\
\varphi_{\tau}=st_{i},\,\tau\in\left[t_{i},1\right];\, t'_{k}:=\left(\sigma_{k}^{-}/\sigma_{k-1}^{+}\right)t_{k}\}\label{eq:Q3}
\end{multline}
and define trajectories with $\lim_{\tau\rightarrow0}\tau^{-1}\varphi_{\tau}\in G_{\pi;j}$
and $\varphi_{1}\in\bar{G}_{\pi;i-1}^{\,0}$
\begin{equation}
\mathcal{Q}\left[\bar{G}_{\pi;i,j}^{\,0}\right]:=\bigcup_{s\in\bar{G}_{\pi;i-1}}\ \bigcup_{T_{i,j}\in\, T_{s}\left[\bar{G}_{\pi;i,j}^{\,0}\right]}\ {\textstyle \mathcal{Q}_{s}\,[\bar{G}_{\pi;i,j}^{\,0},\, T_{i,j}]}.\label{eq:QG3}
\end{equation}
Finally, let $s\in\bar{G}_{\pi;j}^{\,1}$, 
\begin{equation}
T_{s}\left[\bar{G}_{\pi;i,j}^{\,1}\right]:=\left\{ T_{i,j}:\,0\leq t_{k}\leq\left(\bar{\sigma}_{k}^{-}/\bar{\sigma}_{k+1}^{+}\right)t{}_{k}\leq t_{k+1}<1;\, t_{j}=\left(\bar{s}/\bar{\sigma}_{i}^{-}\right)\right\} ,
\end{equation}
\begin{multline}
\mathcal{Q}_{s}\left[\bar{G}_{\pi;i,j}^{\,1},\, T_{i,j}\right]:=\{\varphi\in\mathcal{Q}:\, i\leq k\leq j-1;\,\tau^{-1}\varphi_{\tau}\in G_{\pi;i},\,\tau\in\left[0,t_{i}\right];\\
\varphi_{\tau}=\tau-\bar{\sigma}_{k}^{+}t_{k},\,\tau\in\left[t_{k},t'_{k}\right];\,\tau^{-1}\varphi_{\tau}\in G_{\pi;k+1},\,\tau\in\left[t'_{k},t_{k+1}\right];\\
\varphi_{\tau}=\tau-\bar{s},\,\tau\in\left[t_{j},1\right];\, t'_{k}:=\left(\bar{\sigma}_{k}^{+}/\bar{\sigma}_{k+1}^{-}\right)t_{k}\},\label{eq:Q4}
\end{multline}
and the set of trajectories with $\lim_{\tau\rightarrow0}\tau^{-1}\varphi_{\tau}\in G_{\pi;i}$
and $\varphi_{1}\in\bar{G}_{\pi;j}^{\,1}$ be
\begin{equation}
\mathcal{Q}\left[\bar{G}_{\pi;i,j}^{\,1}\right]:=\bigcup_{s\in\bar{G}_{\pi;j}}\ \bigcup_{T_{i,j}\in\, T_{s}\left[\bar{G}_{\pi;i,j}^{\,1}\right]}\ {\textstyle \mathcal{Q}_{s}\,[\bar{G}_{\pi;i,j}^{\,1},\, T_{i,j}]}.\label{eq:QG4}
\end{equation}
By continuity of $\pi$ we observe that the number $N_{g}$ of connected
intervals in which $\pi$ is $0$ or $1$ is finite, then also is
the number of combination of contiguous intervals $G_{\pi;i,j}^{\,0}$,
$G_{\pi;i,j}^{\,1}$, $\bar{G}_{\pi;i,j}^{\,0}$, and $\bar{G}_{\pi;i,j}^{\,1}$
satisfying the condition $\alpha_{i}=\alpha_{i+1}=\,...\,=\alpha_{j}=\alpha\in\left\{ 0,1\right\} $.
Calling $N_{g}^{*}$ the number of these combination of intervals,
plus the elementary intervals $G_{\pi;i}$, we can considerably lighten
our notation by relabeling as $\mathcal{Q}_{k}$, $1\leq k\leq N_{g}^{*}$
their associated subsets of $\mathcal{Q}$ defined by Eq.s (\ref{eq:QG1}),
(\ref{eq:QG2}), (\ref{eq:QG3}) and (\ref{eq:QG4}). 

Since for any $\varphi$ that does not belong to $\mathcal{Q}_{k}$,
$1\leq k\leq N_{g}^{*}$ we will find $S_{\pi}\left[\varphi\right]=-\infty$
we can use the relation $\mathbb{P}\left(\chi_{n}\in\mathcal{B}\right)=\sum_{i\leq k\leq N_{g}^{*}}\mathbb{P}\left(\chi_{n}\in\mathcal{B}\cap\mathcal{Q}_{k}\right)$
to conclude that 
\begin{equation}
\lim_{n\rightarrow\infty}n^{-1}\log\mathbb{P}\left(\chi_{n}\in\mathcal{B}\right)=\sup_{1\leq k\leq N^{*}}\lim_{n\rightarrow\infty}n^{-1}\log\mathbb{P}\left(\chi_{n}\in\mathcal{B}\cap\mathcal{Q}_{k}\right)
\end{equation}
and restrict our attention to $\varphi\in\mathcal{Q}_{k}$. 
\end{proof}
\begin{onehalfspace}
\begin{figure}
\begin{singlespace}
\centering{}\includegraphics[scale=0.45]{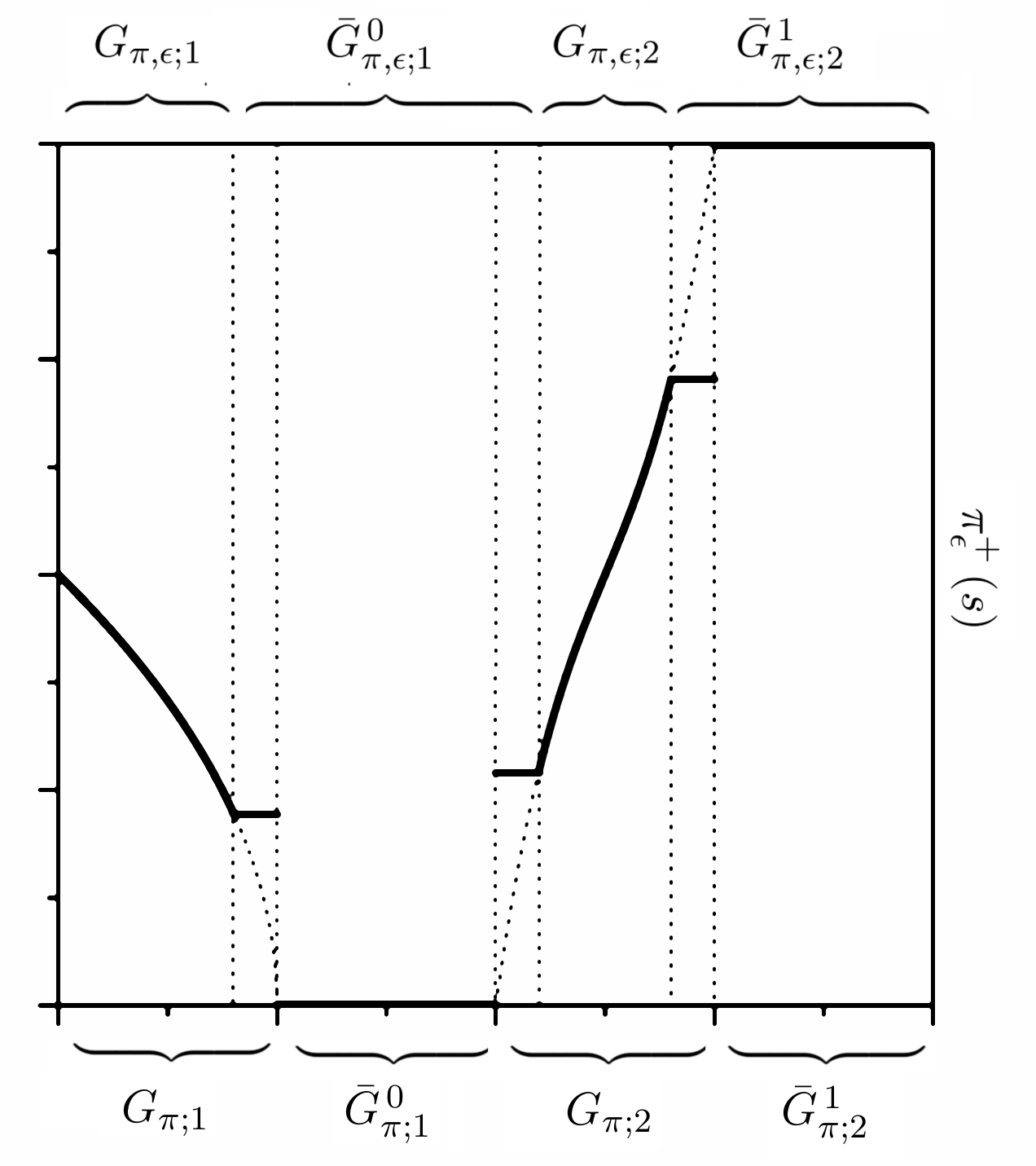}\\
~~\includegraphics[scale=0.45]{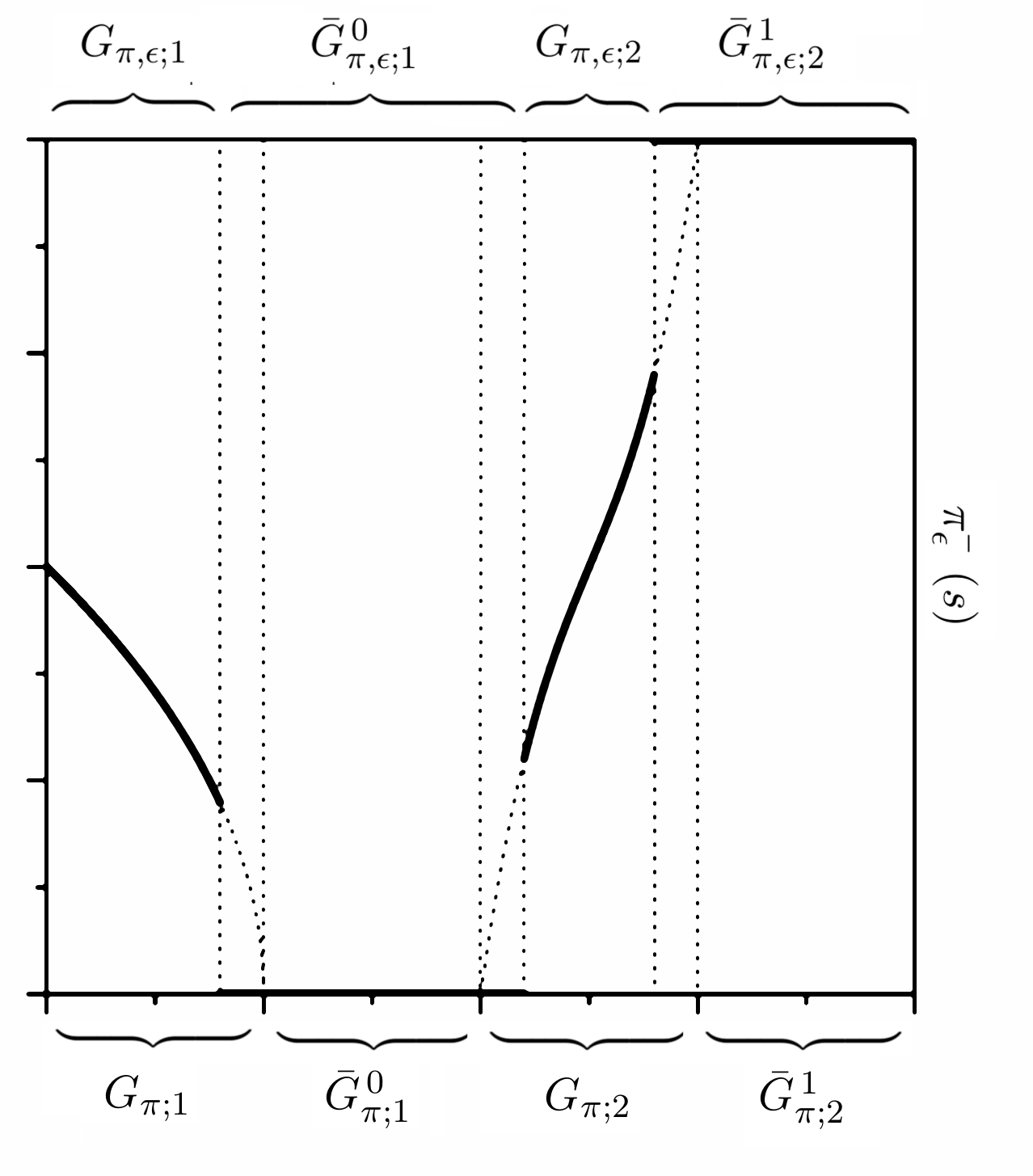}\caption{\label{fig:6}\textit{Functions $\pi_{\epsilon}^{+}$ (upper figure)
and $\pi_{\epsilon}^{-}$ (lower figure) as defined by Eq.s (\ref{eq:defp1})
and (\ref{eq:defp2}) from the same urn function in Figure \ref{fig:5}.}}
\end{singlespace}
\end{figure}

\end{onehalfspace}

\subsubsection{Extension to $\pi\in\left[0,1\right]$: singularities on the edges
of $G_{\pi,i}$.}

\label{Section3.1.4}The above argument fixes the problem of having
$\log\pi\left(\tau^{-1}\varphi_{\tau}\right)=-\infty$ when $\tau^{-1}\varphi_{\tau}\in\bar{G}_{\pi;i}^{\,0}$
(or $\log\bar{\pi}\left(\tau^{-1}\varphi_{\tau}\right)=-\infty$ when
$\tau^{-1}\varphi_{\tau}\in\bar{G}_{\pi;i}^{\,1}$), but we still
have $\pi\left(s\right)\rightarrow0$ or $1$ when $s\rightarrow\sigma_{i}^{\pm}$,
which prevent us from recovering Lemmas \ref{Lemma 5-1} and \ref{Lemma 5}.
To circumvent this last issue we can proceed as follows. 
\begin{proof}
Take some small $\epsilon>0$ and define $G_{\pi,\epsilon;i}$, $\bar{G}_{\pi,\epsilon;i}^{\,\alpha_{i}}$
as in Eq.s (\ref{eq:G1}), (\ref{eq:G2}), (\ref{eq:G3}), (\ref{eq:G4})
above with $\sigma_{i}^{-}+\epsilon$ in place of $\sigma_{i}^{-}$
and $\sigma_{i}^{+}-\epsilon$ in place of $\sigma_{i}^{+}$, such
that some $\delta_{\epsilon}>0$ exists for which 
\begin{equation}
\sup_{i}\sup_{s\in G_{\pi,\epsilon;i}}\pi\left(s\right)\geq\delta_{\epsilon},\ \sup_{i}\sup_{s\in G_{\pi,\epsilon;i}}\bar{\pi}\left(s\right)\geq\delta_{\epsilon}.
\end{equation}
Then, define the discontinuous functions $\pi_{\epsilon}^{+}\geq\pi$
and $\pi_{\epsilon}^{-}\leq\pi$ as follows:
\begin{multline}
\pi_{\epsilon}^{+}:=\{\pi_{\epsilon}^{+}\left(s\right),\, s\in\left[0,1\right]:\,\pi_{\epsilon}^{+}\left(s\right)=\pi\left(s\right),\, s\in G_{\pi,\epsilon;i}:=\left(\sigma_{i}^{-}+\epsilon,\sigma_{i}^{+}-\epsilon\right);\\
\pi\left(s\right)=\pi\left(\sigma_{i}^{-}+\epsilon\right),\, s\in\left[\sigma_{i}^{-},\sigma_{i}^{-}+\epsilon\right];\,\pi_{\epsilon}^{+}\left(s\right)=\pi\left(\sigma_{i}^{+}-\epsilon\right),\, s\in\left[\sigma_{i}^{+}-\epsilon,\sigma_{i}^{+}\right]\},\label{eq:defp1}
\end{multline}
\begin{multline}
\pi_{\epsilon}^{-}:=\{\pi_{\epsilon}^{-}\left(s\right),\, s\in\left[0,1\right]:\,\pi_{\epsilon}^{-}\left(s\right)=\pi\left(s\right),\, s\in G_{\pi,\epsilon;i}:=\left(\sigma_{i}^{-}+\epsilon,\sigma_{i}^{+}-\epsilon\right);\\
\pi_{\epsilon}^{-}\left(s\right)=\alpha_{\, i-1},\, s\in\left[\sigma_{i}^{-},\sigma_{i}^{-}+\epsilon\right];\,\pi_{\epsilon}^{-}\left(s\right)=\alpha_{i},\, s\in\left[\sigma_{i}^{+}-\epsilon,\sigma_{i}^{+}\right]\}.\label{eq:defp2}
\end{multline}
Our proof will consist in showing Theorem \ref{Theorem 1} for the
above modified urn functions and then provide an argument to take
$\epsilon\rightarrow0$. 

Let first consider $\pi_{\epsilon}^{+}$. Since by definition we can
bound $\pi_{\epsilon}^{+}\left(s\right)\geq\delta_{\epsilon}$ and
$\bar{\pi}_{\epsilon}^{+}\left(s\right)\geq\delta_{\epsilon}$ when
$s\in G_{\pi;i}$, it is clear that both Lemmas \ref{Lemma 5-1},
\ref{Lemma 5} would hold again for $\pi_{\epsilon}^{+}$ in each
metric space $(\mathcal{Q}_{k},\,\left\Vert \cdot\right\Vert )$,
with some $W_{\pi_{\epsilon}^{+}}\left(s,\,\epsilon\right)$ such
that $\lim_{s\rightarrow0,}W_{\pi_{\epsilon}^{+}}\left(s,\,\epsilon\right)=0$
for any $\epsilon>0$ in place of of $W_{\pi}\left(s\right)$. Then
we can apply the proof for $\pi\in\left(0,1\right)$ to the events
$\mathcal{B}\cap\mathcal{Q}_{k}$, obtaining for $\pi_{\epsilon}^{+}$
\begin{equation}
\limsup_{n\rightarrow\infty}n^{-1}\log\mathbb{P}\left(\chi_{n}\in\mathcal{B}\right)\leq-\inf_{1\leq k\leq N^{*}}\inf_{\varphi\in\mathcal{\mathrm{cl}}\left(\mathcal{B\,}\cap\mathcal{Q}_{k}\right)}I_{\pi_{\epsilon}}\left[\varphi\right]=-\inf_{\varphi\in\mathcal{\mathrm{cl}\left(\mathcal{B}\right)}}I_{\pi_{\epsilon}^{+}}\left[\varphi\right],\label{eq:LD1}
\end{equation}
\begin{equation}
\liminf_{n\rightarrow\infty}n^{-1}\log\mathbb{P}\left(\chi_{n}\in\mathcal{B}\right)\geq-\inf_{1\leq k\leq N^{*}}\inf_{\varphi\in\mathcal{\mathrm{int}}\left(\mathcal{B\,}\cap\mathcal{Q}_{k}\right)}I_{\pi_{\epsilon}^{+}}\left[\varphi\right]=-\inf_{\varphi\in\mathcal{\mathrm{int}\left(\mathcal{B}\right)}}I_{\pi_{\epsilon}^{+}}\left[\varphi\right].\label{eq:LD2}
\end{equation}
 We can produce an identical reasoning for $\pi_{\epsilon}^{-}$,
provided we consider $G_{\pi,\epsilon;i}$ on place of of $G_{\pi;i}$
in the definitions of the sets $\mathcal{Q}_{k}$, $1\leq k\leq N_{g}^{*}$:
we will relabel them as $\mathcal{Q}_{\epsilon;k}$, $1\leq k\leq N_{g}^{*}$
to emphasize the dependence on $\epsilon$ of the intervals. Then,
also for $\pi_{\epsilon}^{-}$ we can write 
\begin{equation}
\limsup_{n\rightarrow\infty}n^{-1}\log\mathbb{P}\left(\chi_{n}\in\mathcal{B}\right)\leq-\inf_{1\leq k\leq N^{*}}\inf_{\varphi\,\in\mathcal{\mathrm{cl}}\left(\mathcal{B}\,\cap\,\mathcal{Q}_{\epsilon;k}\right)}I_{\pi_{\epsilon}^{-}}\left[\varphi\right]=-\inf_{\varphi\,\in\mathcal{\mathrm{cl}\left(\mathcal{B}\right)}}I_{\pi_{\epsilon}^{-}}\left[\varphi\right],\label{eq:LD3}
\end{equation}
\begin{equation}
\liminf_{n\rightarrow\infty}n^{-1}\log\mathbb{P}\left(\chi_{n}\in\mathcal{B}\right)\geq-\inf_{1\leq k\leq N^{*}}\inf_{\varphi\,\in\mathcal{\mathrm{cl}}\left(\mathcal{B}\,\cap\,\mathcal{Q}_{\epsilon;k}\right)}I_{\pi_{\epsilon}^{-}}\left[\varphi\right]=-\inf_{\varphi\in\,\mathcal{\mathrm{int}\left(\mathcal{B}\right)}}I_{\pi_{\epsilon}^{-}}\left[\varphi\right].\label{eq:LD4}
\end{equation}
The last step is to prove that for any Borel subset $\mathcal{B}$
of $\mathcal{Q}$ 
\begin{equation}
\lim_{\epsilon\rightarrow0}\,\inf_{\varphi\,\in\,\mathcal{B\,}\cap\,\mathcal{Q}_{k}}\, I_{\pi_{\epsilon}^{+}}\left[\varphi\right]=\lim_{\epsilon\rightarrow0}\,\inf_{\varphi\,\in\,\mathcal{B\,}\cap\,\mathcal{Q}_{\epsilon;k}}\, I_{\pi_{\epsilon}^{-}}\left[\varphi\right]=\inf_{\varphi\,\in\,\mathcal{B\,}\cap\,\mathcal{Q}_{k}}\, I_{\pi}\left[\varphi\right].\label{eq:4.333}
\end{equation}
We will explicitly prove this relation only for subsets of the kind
$\mathcal{Q\,}[\, G_{\pi;i,j}^{\,0}]$, since all other cases can
be shown using the same technique with minimal modifications. Then
let $\mathcal{Q\,}[\, G_{\pi;i,j}^{\,0}]$ as in Eq. (\ref{eq:QG1})
and call $\mathcal{Q\,}[\, G_{\pi,\epsilon;i,j}^{\,0}]$ its version
with $\sigma_{k}^{+}-\epsilon$ on place of of $\sigma_{k}^{+}$ and
$\sigma_{k}^{-}+\epsilon$ on place of of $\sigma_{k}^{-}$. By Eq.
(\ref{eq:QG1}), to prove Eq. (\ref{eq:4.333}) it suffices to show
that
\begin{equation}
\lim_{\epsilon\rightarrow0}\ \inf_{\varphi\,\in\,\mathcal{B\,}\cap\,\mathcal{Q}_{s}[G_{\pi;i,j}^{\,0},\, T_{i+1,j}]}\, I_{\pi_{\epsilon}^{+}}\left[\varphi\right]=\lim_{\epsilon\rightarrow0}\ \inf_{\varphi\,\in\,\mathcal{B\,}\cap\mathcal{Q}_{s}[G_{\pi,\epsilon;i,j}^{\,0},\, T_{\epsilon,i+1,j}^{*}]}\, I_{\pi_{\epsilon}^{-}}\left[\varphi\right]=\inf_{\varphi\,\in\,\mathcal{B\,}\cap\,\mathcal{Q}_{s}[G_{\pi;i,j}^{\,0},\, T_{i+1,j}]}\, I_{\pi}\left[\varphi\right],\label{eq:hhhhhhhhhhhhhhhh}
\end{equation}
with $s\in G_{\pi;i}$, $T_{i+1,j}\in T_{s}[G_{\pi;i,j}^{\,0}]$ and
\begin{equation}
T_{\epsilon,i+1,j}^{*}:=\left\{ t_{\epsilon,k}:=\left(\sigma_{k}^{-}/\left(\sigma_{k}^{-}+\epsilon\right)\right)t_{k}:\, i+1\leq k\leq j\right\} .
\end{equation}
Then, define the optimal trajectories of the variational problems
for $\pi_{\epsilon}^{+}$ and $\pi_{\epsilon}^{-}$: 
\begin{equation}
\varphi^{+}:\, I_{\pi_{\epsilon}^{+}}[\varphi^{+}]=\,\,\inf_{\varphi\,\in\,\mathcal{B\,}\cap\,\mathcal{Q}_{s}[G_{\pi;i,j}^{\,0},\, T_{i+1,j}]}\,\, I_{\pi_{\epsilon}^{+}}\left[\varphi\right],
\end{equation}
\begin{equation}
\varphi^{-}:\, I_{\pi_{\epsilon}^{-}}[\varphi^{-}]=\inf_{\varphi\,\in\,\mathcal{B\,}\cap\mathcal{Q}_{s}[G_{\pi,\epsilon;i,j}^{\,0},\, T_{\epsilon,i+1,j}^{*}]}I_{\pi_{\epsilon}^{-}}\left[\varphi\right].
\end{equation}
Since $\varphi^{+}$ may not belong to $\mathcal{Q}_{s}[G_{\pi,\epsilon;i,j}^{\,0},\, T_{\epsilon,i+1,j}^{*}]$
it will be useful to introduce a modified trajectory $\varphi_{\epsilon}^{+}:=\left\{ \varphi_{\epsilon;\tau}^{+}:\,\tau\in\left[0,1\right]\right\} $,
defined as follows
\begin{equation}
\varphi_{\epsilon;\tau}^{+}:=\left\{ \begin{array}{l}
\underset{\,}{\sigma_{k}^{+}t'_{k+1},}\\
\inf\left\{ \left(\sigma_{k}^{-}+\epsilon\right)\tau,\,\sup\left\{ \varphi_{\tau}^{+},\left(\sigma_{k}^{+}-\epsilon\right)\tau\right\} \right\} ,\\
\overset{\,}{\sigma_{k}^{-}t_{k},}
\end{array}\ \begin{array}{l}
\underset{\,}{t'_{k+1}<\tau<t'_{\epsilon,k+1},}\\
\underset{\,}{t'_{\epsilon,k+1}\leq\tau<t{}_{\epsilon,k},}\\
t{}_{\epsilon,k}<\tau<t{}_{k},
\end{array}\right.
\end{equation}
with $i\leq k\leq j$ and $t'_{\epsilon,k}=\left(\left(\sigma_{k}^{-}+\epsilon\right)/\left(\sigma_{k-1}^{+}-\epsilon\right)\right)t_{\epsilon,k}$
as for $t'_{k}$. The scope of this modified trajectory will be clear
after we state the following auxiliary relations. By definition of
$\varphi_{\tau}^{-}$ as optimal trajectory for $I_{\pi_{\epsilon}^{-}}$
we find $I_{\pi_{\epsilon}^{-}}[\varphi_{\epsilon}^{+}]\geq I_{\pi_{\epsilon}^{-}}[\varphi^{-}]$,
while by definition of $\varphi_{\tau}^{+}$ we have $I_{\pi_{\epsilon}^{+}}[\varphi^{-}]\geq I_{\pi_{\epsilon}^{+}}[\varphi^{+}]$.
Now let $\Gamma_{\epsilon}:=I_{\pi_{\epsilon}^{+}}[\varphi^{+}]-I_{\pi_{\epsilon}^{+}}[\varphi_{\epsilon}^{+}]$.
By continuity of $I_{\pi_{\epsilon}^{+}}$ we can write 
\begin{equation}
\lim_{\epsilon\rightarrow0}\Gamma_{\epsilon}:=\lim_{\epsilon\rightarrow0}\,(\, I_{\pi_{\epsilon}^{+}}[\varphi^{+}]-I_{\pi_{\epsilon}^{+}}[\varphi_{\epsilon}^{+}]\,)=0.
\end{equation}
Then, consider $I_{\pi_{\epsilon}^{+}}[\varphi_{\epsilon}^{+}]$ and
$I_{\pi_{\epsilon}^{-}}[\varphi_{\epsilon}^{+}]$. Since $\pi_{\epsilon}^{+}\left(\tau^{-1}\varphi_{\epsilon;\tau}^{+}\right)=\pi_{\epsilon}^{-}\left(\tau^{-1}\varphi_{\epsilon;\tau}^{+}\right)$
for $\tau\in[t'_{\epsilon,k+1},t{}_{\epsilon,k},]$ by construction
their difference lies only in the intervals when $(t'_{k},\, t'_{\epsilon,k})$
and $\left(t{}_{\epsilon,k},\, t{}_{k}\right)$, so that we can bound
as
\begin{multline}
\left|\Delta_{\epsilon}\right|:=|I_{\pi_{\epsilon}^{+}}[\varphi_{\epsilon}^{+}]-I_{\pi_{\epsilon}^{-}}[\varphi_{\epsilon}^{+}]|=\sum_{i\leq k\leq j}\int_{\tau\in(t'_{k+1},\, t'_{\epsilon,k+1})\cup(t{}_{\epsilon,k},\, t{}_{k})}d\tau\,\left|\log\bar{\pi}_{\epsilon}^{+}\left(\tau^{-1}\varphi_{\tau}\right)\right|=\\
=\left(j-i\right)\left[\left(t'_{k+1}-t'_{\epsilon,k+1}\right)+\left(t{}_{\epsilon,k}-t{}_{k}\right)\right]\left|\log\bar{\pi}_{\epsilon}^{+}\left(t_{\epsilon}^{-1}\left(\sigma_{k}^{+}-\epsilon\right)\right)\right|\leq\\
\leq\left(j-i\right)\left(\left|t'_{k+1}-t'_{\epsilon,k+1}\right|+\left|t{}_{k+1}-t{}_{\epsilon,k+1}\right|\right)\delta_{\epsilon}.\label{eq:kkk1kkk1}
\end{multline}
The same considerations hold for $I_{\pi_{\epsilon}^{+}}[\varphi_{\epsilon}^{-}]$
and $I_{\pi_{\epsilon}^{-}}[\varphi_{\epsilon}^{-}]$, for which again
one finds $I_{\pi_{\epsilon}^{+}}[\varphi^{-}]-I_{\pi_{\epsilon}^{-}}[\varphi^{-}]=\Delta_{\epsilon}.$
Collecting the above relations we find 
\begin{equation}
I_{\pi_{\epsilon}^{-}}[\varphi^{-}]\leq I_{\pi_{\epsilon}^{-}}[\varphi_{\epsilon}^{+}]=I_{\pi_{\epsilon}^{+}}[\varphi_{\epsilon}^{+}]+\Delta_{\epsilon}=I_{\pi_{\epsilon}^{+}}[\varphi^{+}]-\Gamma_{\epsilon}+\Delta_{\epsilon},
\end{equation}
\begin{equation}
I_{\pi_{\epsilon}^{+}}[\varphi^{+}]\leq I_{\pi_{\epsilon}^{+}}[\varphi^{-}]=I_{\pi_{\epsilon}^{-}}[\varphi^{-}]+\Delta_{\epsilon},
\end{equation}
from which follows that 
\begin{equation}
\lim_{\epsilon\rightarrow0}I_{\pi_{\epsilon}^{+}}[\varphi^{+}]=\lim_{\epsilon\rightarrow0}I_{\pi_{\epsilon}^{-}}[\varphi^{-}].\label{eq:4.44}
\end{equation}
Now consider the optimal trajectory $\varphi^{*}$ of the variational
problem for the original $\pi$ 
\begin{equation}
\varphi^{*}:\, I_{\pi}[\varphi^{*}]=\inf_{\varphi\,\in\,\mathcal{B}\,\cap\,\mathcal{Q}_{s}\,[\, G_{\pi;i,j}^{\,0},\, T_{i+1,j}]}I_{\pi}\left[\varphi\right].
\end{equation}
By the above definition we have $I_{\pi}\left[\varphi^{*}\right]\leq I_{\pi}\left[\varphi^{-}\right]$
and since $\pi^{-}\left(\tau^{-1}\varphi_{\tau}^{-}\right)=\pi\left(\tau^{-1}\varphi_{\tau}^{-}\right)$
for $\tau\in[t'_{\epsilon,k+1},t{}_{\epsilon,k}]$ we can bound the
difference between $I_{\pi_{\epsilon}^{-}}\left[\varphi^{-}\right]$
and $I_{\pi}\left[\varphi^{-}\right]$ as
\begin{multline}
\left|\Delta'_{\epsilon}\right|:=|I_{\pi}\left[\varphi^{-}\right]-I_{\pi_{\epsilon}^{-}}\left[\varphi^{-}\right]|=\sum_{i\leq k\leq j}\int_{\tau\in(t'_{k+1},\, t'_{\epsilon,k+1})\cup(t{}_{\epsilon,k},\, t{}_{k})}d\tau\,\left|\log\bar{\pi}\left(\tau^{-1}\varphi_{\tau}\right)\right|\leq\\
\leq\left(j-i\right)\left(\left|t'_{k+1}-t'_{\epsilon,k+1}\right|+\left|t{}_{k+1}-t{}_{\epsilon,k+1}\right|\right)\delta_{\epsilon}.\label{eq:ghgh}
\end{multline}
 As $\pi\leq\pi_{\epsilon}^{+}$ by construction we can also conclude
that $I_{\pi_{\epsilon}^{+}}\left[\varphi^{*}\right]\leq I_{\pi}\left[\varphi^{*}\right]$,
while by definition of $\varphi^{+}$ as optimal trajectory for $I_{\pi_{\epsilon}^{+}}$
we can write $I_{\pi_{\epsilon}^{+}}\left[\varphi^{+}\right]\leq I_{\pi_{\epsilon}^{+}}\left[\varphi^{*}\right]$.
Collecting all those relations we obtain the following inequalities
\begin{equation}
I_{\pi_{\epsilon}^{+}}\left[\varphi^{+}\right]\leq I_{\pi}\left[\varphi^{*}\right]\leq I_{\pi_{\epsilon}^{-}}\left[\varphi^{-}\right]-\Delta'_{\epsilon},
\end{equation}
and by taking $\epsilon\rightarrow0$ we can finally write that 
\begin{equation}
\lim_{\epsilon\rightarrow0}I_{\pi_{\epsilon}^{+}}[\varphi^{+}]\leq I_{\pi}\left[\varphi^{*}\right]\leq\lim_{\epsilon\rightarrow0}I_{\pi_{\epsilon}^{-}}[\varphi^{-}],
\end{equation}
which, together with Eq. (\ref{eq:4.44}), proves Eq. (\ref{eq:hhhhhhhhhhhhhhhh}).
This completes our extension of Theorem \ref{Theorem 1} to the whole
set of urn function $\mathcal{U}$ in case we take as initial condition
$X_{n,1}$ uniformly distributed on $\left[0,1\right]$.
\end{proof}

\subsubsection{Initial conditions and time-inhomogeneous functions.}

\label{Section3.1.5}First we deal with the influence of initial conditions
on the large deviation properties of our urn process. Until now we
considered processes with initial condition $X_{n,1}$ uniformly distributed
on $\left[0,1\right]$, the following lemma shows that fixing $X_{n,m}$
for some $m>0$ will not affect the rate function if $\pi\in\left(0,1\right)$,
provided that $m$ is finite and $0\leq X_{n,m}\leq m$.
\begin{lem}
\label{Lemma 2}Let $X_{n}$ be a urn process with urn function $\pi\in\left(0,1\right)$
and initial conditions $0<X_{n,m}<m<\infty$. Then, the rate function
is independent from these initial conditions.\end{lem}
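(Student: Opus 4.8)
The plan is to revisit the proof of Theorem~\ref{Theorem 1} for $\pi\in(0,1)$ (Sections~\ref{Section3.1.1}--\ref{Section3.1.2}) and to observe that the initial condition enters there only through objects whose large--deviation behaviour is unaffected by moving the pinning time from $1$ to a fixed $m$. Throughout, since $\pi$ is continuous and $(0,1)$--valued on the compact $[0,1]$, there is $\delta\in(0,1/2]$ with $\delta\le\pi(s)\le1-\delta$ for all $s$, so $\|\log\pi\|,\|\log\bar{\pi}\|\le\log(1/\delta)<\infty$. Write $\chi_n^{*}$ for the process pinned by $X_{n,m}=X_m^{*}$ with $0<X_m^{*}<m$ and evolving by (\ref{eq:1.2}) for $k\ge m$; its law on $[0,m/n]$ is immaterial, as any two admissible choices differ by at most $m/n$ in supremum norm and are therefore exponentially equivalent in $(\mathcal{Q},\|\cdot\|)$.

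The upper bound comes for free: from $\mathbb{P}(\chi_n^{*}\in\mathrm{cl}(\mathcal{B}))\le\mathbb{P}(\chi_n\in\mathrm{cl}(\mathcal{B}))/\mathbb{P}(X_{n,m}=X_m^{*})$, together with the fact that $X_m^{*}\in\{1,\dots,m-1\}$ is reachable in $m-1$ steps each of probability at least $\delta$, so that $\mathbb{P}(X_{n,m}=X_m^{*})\ge c_m>0$ uniformly in $n$ and $n^{-1}\log\mathbb{P}(X_{n,m}=X_m^{*})\to0$, Theorem~\ref{Theorem 1} yields $\limsup_n n^{-1}\log\mathbb{P}(\chi_n^{*}\in\mathrm{cl}(\mathcal{B}))\le-\inf_{\varphi\in\mathrm{cl}(\mathcal{B})}I_\pi[\varphi]$.

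For the lower bound I would rerun the change of measure of Section~\ref{Section3.1.1}. For $\varphi\in\mathcal{Q}_n$ with $\varphi_{m/n}=n^{-1}X_m^{*}$, the analogue of (\ref{eq:2.1-1}) is $\mathbb{P}(\chi_n^{*}=\varphi)=\rho_m(\varphi)\prod_{m\le k\le n-1}\pi(Y_{n,k}(\varphi)/k)^{\delta Y_{n,k}(\varphi)}\,\bar{\pi}(Y_{n,k}(\varphi)/k)^{1-\delta Y_{n,k}(\varphi)}$, where the prefactor $\rho_m(\varphi)$ --- collecting the $k\le m-1$ factors and the normalisation by $\mathbb{P}(X_{n,m}=X_m^{*})$ --- lies between two $m$--dependent positive constants, again because $\delta\le\pi\le1-\delta$; moreover the part of the integral $S_\pi[\varphi]$ over $[0,m/n]$ that the shortened product omits is $O(m/n)$. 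Both corrections are negligible, so Lemma~\ref{Lemma 5} applies as before and gives $n^{-1}\log\mathbb{P}(\chi_n^{*}=\varphi)=S_\pi[\varphi]+O(W_\pi(1/n))$ uniformly in $\varphi$, exactly as for $\chi_n$. Consequently $n^{-1}\log\mathbb{P}(\chi_n^{*}\in\mathcal{B})=\log2+n^{-1}\log\mathbb{E}_0^{*}(e^{nS_\pi[\beta_n^{*}]}\mathbb{I}_{\{\beta_n^{*}\in\mathcal{B}\}})+o(1)$, where $\beta_n^{*}$ is the fair--coin urn process (as in Lemma~\ref{Lemma 3}) pinned by $B_{n,m}=B_m^{*}$, and the Varadhan--type bounds (Lemmas 4.3.2 and 4.3.4 of \cite{Dembo Zeitouni}) together with the continuity of $S_\pi$ (Lemma~\ref{Lemma 5-1}) give the full LDP for $\chi_n^{*}$ with good rate function $J-S_\pi=I_\pi$, provided $\beta_n^{*}$ satisfies the Mogulskii LDP of Lemma~\ref{Lemma 3}.

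That last point is the only one needing a separate argument, and it is essentially trivial: coupling $\beta_n^{*}$ with the unpinned $\beta_n$ by driving both with the same coin increments $\delta B_{n,i}$, $i\ge m$, the two differ by at most $2m/n$ in supremum norm almost surely, hence are exponentially equivalent, and $\beta_n^{*}$ inherits the LDP of Lemma~\ref{Lemma 3} (Theorem 4.2.13 of \cite{Dembo Zeitouni}). The fair--coin reference is what makes this painless --- its transitions are state--independent, so one pool of randomness drives both the pinned and unpinned versions, which is not possible for $\chi_n$ directly and is the reason for routing through $\beta_n$. The hypothesis $\pi\in(0,1)$ is used only to keep $\rho_m$ and the boundary block bounded. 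I expect no serious obstacle on this route; if one instead wanted a fully self--contained proof, one would couple $\chi_n$ and $\chi_n^{*}$ directly by maximal coupling of their increments for $k\ge m$, and the actual work would be to show the number of disagreements is $o(n)$ with superexponentially small failure probability --- which is exactly what the integrability condition defining $\mathcal{U}$ secures.
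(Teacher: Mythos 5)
Your proposal is correct and rests on the same key observation as the paper's own proof: since $\pi\in(0,1)$ makes $\left\Vert \log\pi\right\Vert$ and $\left\Vert \log\bar{\pi}\right\Vert$ finite, the pinned and unpinned per-path log-probabilities differ only by the initial segment's contribution, of order $(\left\Vert \log\pi\right\Vert +\left\Vert \log\bar{\pi}\right\Vert )m/n\rightarrow0$ uniformly over paths. The paper stops there and declares the identity of the rate functions obvious, whereas you spell out the transfer (conditioning bound for the upper estimate, pinned fair-coin reference plus exponential equivalence and Varadhan for the lower one); this is added detail rather than a different route.
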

\begin{proof}
Let $\varphi\in\mathcal{Q}_{n}$, $x_{m,n}=m^{-1}X_{m,n}$ and $\epsilon_{n,\tau}$
as in Lemma \ref{Lemma 5}. If $\pi\in\left(0,1\right)$ then $\left\Vert \log\pi\right\Vert $
and $\left\Vert \log\bar{\pi}\right\Vert $ are bounded quantities
and we can use the estimates of Lemma \ref{Lemma 5} to obtain 
\begin{multline}
n^{-1}\left|\log\mathbb{P}\left(\chi_{n}=\varphi\,|\,\varphi_{m/n}=\left(m/n\right)x_{m,n}\right)-\log\mathbb{P}\left(\chi_{n}=\varphi\right)\right|\leq\\
\leq\int_{\tau\in\left[0,m/n\right]}d\varphi_{\tau}\,\left|\log\pi\left({\textstyle \left(\varphi_{\tau}+\epsilon_{n,\tau}\right)/\tau}\right)\right|+\int_{\tau\in\left[0,m/n\right]}d\tilde{\varphi}_{\tau}\,\left|\log\bar{\pi}\left(\left(\varphi_{\tau}+\epsilon_{n,\tau}\right)/\tau\right)\right|\leq\\
\leq\left(\left\Vert \log\pi\right\Vert +\left\Vert \log\bar{\pi}\right\Vert \right)m/n.\label{eq:initialcond}
\end{multline}
This difference vanishes as $n\rightarrow\infty$ for any $\varphi\in\mathcal{Q}_{n}$.
This obviously implies that the LDPs governing the two processes share
the same rate function. 
\end{proof}
Now consider $\pi\in\left[0,1\right]$. By applying the steps to extend
the proof of Theorem \ref{Theorem 1} we can easily convince that
the only influence on LDPs arising from fixing $\varphi_{m/n}=\left(m/n\right)x_{m,n}$
comes from the fact that some trajectories could be forbidden, since
by continuity of $\varphi_{\tau}$ a trajectory from $\varphi_{m/n}=\left(m/n\right)x_{m,n}$
to $\varphi_{1}=s$ may have to cross intervals where $\pi\left(\tau^{-1}\varphi_{\tau}\right)$
is $0$ or $1$ without having at the same time $d\varphi_{\tau}=0$
or $1$, which is a necessary condition to ensure that $S_{\pi}\left[\varphi\right]>-\infty$. 

As example, consider an urn function such that $\pi\left(s\right)=0$
for some $s\in\left[\sigma_{1}^{+},\sigma_{2}^{-}\right]$, $0<\sigma_{1}^{+}<\sigma_{2}^{-}<1$,
and $\pi\left(s\right)>0$ otherwise. As before, we can define the
intervals $G_{\pi;1}:=\left[0,\sigma_{1}^{+}\right)$, $\bar{G}_{\pi;1}^{\,0}:=\left[\sigma_{1}^{+},\sigma_{2}^{-}\right]$
and $G_{\pi;2}:=\left(\sigma_{2}^{-},1\right]$. Then, take $\left(m/n\right)^{-1}\varphi_{m/n}=x_{m,n}\in G_{\pi;1}$
for some $m<\infty$. Since any trajectory $\varphi$ that reach $G_{\pi;2}$
from $G_{\pi;1}$ would require that $\tau^{-1}\varphi_{\tau}$ crosses
$\bar{G}_{\pi;1}^{\,0}$ with some $d\varphi>0$, we conclude that
such trajectory will return $S_{\pi}\left[\varphi\right]=-\infty$.
Hence any allowed trajectory with $\left(m/n\right)^{-1}\varphi_{m/n}=x_{m,n}\in G_{\pi;1}$
would be confined in $G_{\pi;1}$, like a process with same initial
condition and a modified urn function $\pi^{*}\left(s\right)=\pi\left(s\right)$
for $s\in G_{\pi;1}$ and $\pi^{*}\left(s\right)=0$ otherwise.

In general, the allowed interval $\left[z_{-}^{*},z_{+}^{*}\right]$
of $\tau^{-1}\varphi_{\tau}$ for trajectories with $\left(m/n\right)^{-1}\varphi_{m/n}=x_{n,m}$
will run from the highest non isolated value of $s$ reachable from
$x_{n,m}$ and such that $\pi\left(s\right)=1$ to the lowest reachable
non isolated $s$ such that $\pi\left(s\right)=0$, since those points
acts as uncrossable walls for $\tau^{-1}\varphi_{\tau}$, while all
other values contained in $\left[z_{-}^{*},z_{+}^{*}\right]$ can
be crossed at least by trajectories of the type presented in the proof
of Theorem \ref{Theorem 1} above. 

Notice that in the above informal definition we specified that the
point must be non isolated, since isolated points may be eventually
crossed due to the discontinuous nature of the process at finite $n$.
To avoid this inconsistencies we define $Z_{\pi,x_{m,n}}^{*}$ as
the $\limsup$ of the subsets of $\left[0,1\right]$ that the process
$k^{-1}X_{n,k}$ is allowed to hit at time $k=n$ with positive probability
when we take $\mathbb{P}(x_{n,m}=m^{-1}X_{m})=1$ for some $m\leq n$,
$0\leq X_{m}\leq m$ and $n<\infty$. 
\begin{equation}
Z_{\pi,x_{m,n}}^{*}:=\limsup_{n\rightarrow\infty}\left\{ Z:\,\mathbb{P}\left(x_{n,n}\in Z\,|\, x_{n,m}=m^{-1}X_{m}\right)=1\right\} .\label{eq:Zdef}
\end{equation}
The above set is obviously an interval since, as said before, any
internal point can be reached by trajectories of the type described
in the proof of Theorem \ref{Theorem 1}. Hence, we can say that $Z_{\pi,x_{m,n}}^{*}:=\left[z_{-}^{*},z_{+}^{*}\right]$,
with $z_{-}^{*}$ and $z_{+}^{*}$ defined as in the statement of
Corollary \ref{cor:CorrollariINIT}.

That said, it is clear that computing a LDP for a process with initial
condition $x_{n,m}$ would be like computing it with initial condition
$X_{n,1}$ uniformly distributed on $\left[0,1\right]$ once we have
discarded from $\pi$ the forbidden zones. This can be done by considering
a modified $\pi^{*}$ with $\pi^{*}\left(s\right)=1$ in the forbidden
interval $s\in\left[0,z_{-}^{*}\right)$ on the left of $Z_{\pi,x_{m,n}}^{*}$
and $\pi^{*}\left(s\right)=0$ in $s\in\left(z_{+}^{*},1\right]$
on the right of $Z_{\pi,x_{m,n}}^{*}$,
\begin{equation}
\pi^{*}\left(s\right):=\mathbb{I}_{\{s\in\left[0,z_{-}^{*}\right)\}}+\pi\left(s\right)\mathbb{I}_{\{s\in\left[z_{-}^{*},z_{+}^{*}\right]\}},
\end{equation}
so that the probability mass initially distributed on $\left[0,1\right]$
gets pushed inside $Z_{\pi,x_{m,n}}^{*}$ in finite time, simulating
the initial condition at least for what concerns the LDPs computation.

It remains to prove Corollary \ref{cor:Coroll inhomog.} about time-inhomogeneous
functions. In this case we considered only the subclass $\pi\in\left(0,1\right)$,
for which the proof is straightforward
\begin{proof}
Let $\pi\in\mathcal{U}$ with $0<\pi<1$ and let $\pi_{n}\in\mathcal{U}$,
$\pi_{n}\in\left(0,1\right)$ such that $\left|\pi_{n}\left(s\right)-\pi\left(s\right)\right|\leq\delta_{n}$,
$\lim_{n}\delta_{n}=0$ for all $s\in\left[0,1\right]$. By lemma
\ref{Lemma 5} it suffices to show that $\left|S_{\pi_{n}}\left[\varphi\right]-S_{\pi}\left[\varphi\right]\right|\rightarrow0$
as $n\rightarrow0$. We can bound $\left|S_{\pi_{n}}\left[\varphi\right]-S_{\pi}\left[\varphi\right]\right|$
as follows
\begin{multline*}
\left|S_{\pi_{n}}\left[\varphi\right]-S_{\pi}\left[\varphi\right]\right|\leq\int_{\tau\in\left[0,1\right]}d\varphi_{\tau}\,\left|\log\pi_{n}\left(\varphi_{\tau}/\tau\right)-\log\pi\left(\varphi_{\tau}/\tau\right)\right|+\\
+\int_{\tau\in\left[0,1\right]}d\tilde{\varphi}_{\tau}\,\left|\log\bar{\pi}_{n}\left(\varphi_{\tau}/\tau\right)-\log\bar{\pi}\left(\varphi_{\tau}/\tau\right)\right|\leq\\
\leq\int_{\tau\in\left[0,1\right]}d\varphi_{\tau}\,\delta_{n}/\left|\pi\left(\varphi_{\tau}/\tau\right)\right|+\int_{\tau\in\left[0,1\right]}d\tilde{\varphi}_{\tau}\,\delta_{n}/\left|\bar{\pi}\left(\varphi_{\tau}/\tau\right)\right|\leq\\
\leq\left[1/\left(1-\left\Vert \bar{\pi}\right\Vert \right)+1/\left(1-\left\Vert \pi\right\Vert \right)\right]\delta_{n}.
\end{multline*}
Since for $0<\pi<1$ we have $\left\Vert \bar{\pi}\right\Vert <\infty$,
$\left\Vert \pi\right\Vert <\infty$, the above bound vanishes as
$\delta_{n}\rightarrow0$ and the proof is completed.
\end{proof}

\subsection{Entropy of the event $X_{n,n}=\left\lfloor sn\right\rfloor $.}

\label{Section3.2}In this section we use the variational representation
of Sample-Path LDPs to show Theorem \ref{Theorem 2} and Corollaries
\ref{Corollary 3}, \ref{Corollary4}, \ref{Corollary4.1}. Since
the event $\left\{ X_{n,n}=\left\lfloor sn\right\rfloor \right\} $
is slightly finer than those usually considered in large deviations
theory, its analysis requires some additional estimates. Moreover,
note that $\mathcal{Q}_{s}$ is not an $I_{\pi}-$continuity set because
of the fixed endpoint condition $\varphi_{1}=s$, which implies $\mathrm{cl}\left(\mathcal{Q}_{s}\right)=\varnothing$.
We circumvent this problem as follows
\begin{lem}
\label{lemma9}Let $s\in\left[0,1\right]$, $\delta>0$ and define
$\mathcal{Q}_{s,\delta}:={\textstyle \bigcup_{\, u-s\in\left[0,\delta\right]}}\,\mathcal{Q}_{u},$
where $\mathcal{Q}_{s}:=\left\{ \varphi\in\mathcal{Q}:\,\varphi_{1}=s\right\} $,
then 
\begin{equation}
\lim_{n\rightarrow\infty}n^{-1}\log\mathbb{P}\left(\left\lfloor sn\right\rfloor \leq X_{n,n}\leq\left\lfloor \left(s+\delta\right)n\right\rfloor \right)=-\inf_{\varphi\in\mathcal{Q}_{s,\delta}}I_{\pi}\left[\varphi\right].
\end{equation}
\end{lem}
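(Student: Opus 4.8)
The plan is to sandwich the event $E_n:=\{\lfloor sn\rfloor\le X_{n,n}\le\lfloor(s+\delta)n\rfloor\}$ between two events of the type handled by Theorem~\ref{Theorem 1} and then remove the resulting gap. Since $\chi_{n,1}=n^{-1}X_{n,n}$, elementary integer‑part bookkeeping shows that, for every $\epsilon'>0$ and all $n$ large enough,
\[
\{\chi_n\in U\}\ \subseteq\ E_n\ \subseteq\ \{\chi_n\in F_{\epsilon'}\},
\]
where $U:=\{\varphi\in\mathcal{Q}:\varphi_1\in(s,s+\delta)\}$ is open and $F_{\epsilon'}:=\{\varphi\in\mathcal{Q}:\varphi_1\in[s-\epsilon',\,s+\delta+\epsilon']\}$ is closed in $(\mathcal{Q},\|\cdot\|)$. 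Inserting $U$ into the lower bound (\ref{eq:1.8}) and $F_{\epsilon'}$ into the upper bound (\ref{eq:1.9}) gives
\[
-\inf_{\varphi\in U}I_{\pi}[\varphi]\ \le\ \liminf_{n\to\infty}\tfrac1n\log\mathbb{P}(E_n)\ \le\ \limsup_{n\to\infty}\tfrac1n\log\mathbb{P}(E_n)\ \le\ -\inf_{\varphi\in F_{\epsilon'}}I_{\pi}[\varphi].
\]
Then I would let $\epsilon'\downarrow0$, using that $I_{\pi}$ is a \emph{good} rate function: the sets $F_{\epsilon'}$ are closed, decreasing, with $\bigcap_{\epsilon'>0}F_{\epsilon'}=\mathcal{Q}_{s,\delta}$, so choosing near‑minimizers $\varphi_{\epsilon'}\in F_{\epsilon'}$ inside a fixed compact sublevel set of $I_\pi$, extracting a convergent subsequence, and using closedness of each $F_{\epsilon'}$ together with lower semicontinuity yields $\inf_{F_{\epsilon'}}I_{\pi}\uparrow\inf_{\mathcal{Q}_{s,\delta}}I_{\pi}$. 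This gives $\limsup_n\tfrac1n\log\mathbb{P}(E_n)\le-\inf_{\mathcal{Q}_{s,\delta}}I_\pi$, which is already the claimed value.

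It then remains to show $\inf_{U}I_{\pi}=\inf_{\mathcal{Q}_{s,\delta}}I_{\pi}$, i.e.\ that $\mathcal{Q}_{s,\delta}$ is an $I_\pi$-continuity set; the only candidates for the strict inequality $\inf_U I_\pi>\inf_{\mathcal{Q}_{s,\delta}}I_\pi$ are minimizers pinned at an endpoint $\varphi_1=s$ or $\varphi_1=s+\delta$. I would rule this out by an endpoint surgery: given $\varphi$ with $\varphi_1\in\{s,s+\delta\}$ and $I_\pi[\varphi]<\infty$ — so $\varphi\in\mathcal{AC}$, $\dot\varphi\in[0,1]$ a.e., and $S_\pi[\varphi]>-\infty$ — I construct $\varphi^{(k)}\in\mathcal{Q}$ with $\varphi^{(k)}_{1}\in(s,s+\delta)$ and $\|\varphi^{(k)}-\varphi\|\le 1/k$ by perturbing $\dot\varphi$ by a small constant on a positive‑measure set on which $\dot\varphi$ is bounded away from the constraint it might violate (away from $1$ when lifting the terminal value off $s$, away from $0$ when lowering it off $s+\delta$; such a set has measure at least $1-\varphi_1>0$, resp.\ $\varphi_1>0$, since $\int_0^1\dot\varphi_\tau\,d\tau=\varphi_1$). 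One then checks $|J[\varphi^{(k)}]-J[\varphi]|\to0$ by boundedness and continuity of $H$ plus dominated convergence, and $S_\pi[\varphi^{(k)}]\to S_\pi[\varphi]$ from the modulus‑of‑continuity estimate of Lemma~\ref{Lemma 5-1}, extended to arbitrary $\pi\in\mathcal{U}$ through the decomposition of $\mathcal{Q}$ into the classes $\mathcal{Q}_k$ of Section~\ref{Section3.1.3}. Hence $I_\pi[\varphi^{(k)}]\to I_\pi[\varphi]$, which closes the identification.

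I expect the main obstacle to be precisely the case where the endpoint value falls inside, or on the boundary of, a zone where $\pi\in\{0,1\}$: there a naive perturbation can send $S_\pi$ to $-\infty$, so the bending must be carried out on whichever side keeps $\tau^{-1}\varphi^{(k)}_{\tau}$ in a region with $\pi\in(0,1)$, and one has to invoke the defining property $\lim_{\epsilon\to0}\epsilon\int_\epsilon^1 f(z)z^{-2}\,dz=0$ of $\mathcal{U}$ to bound the $\log\pi$ (or $\log\bar\pi$) contribution over the short bending interval by $o(1)$. A convenient shortcut, if preferred, is to pass first to the clipped urn function $\pi^{*}$ of Corollary~\ref{cor:CorrollariINIT}, so that the endpoints of interest lie in the accessible band $(z_{-}^{*},z_{+}^{*})$ and the surgery is unobstructed; in the presence of an initial condition $\chi^{*}_{n,m/n}=n^{-1}X^{*}_m$ the same three steps give the limit with $I_{\pi^{*}}$ in place of $I_{\pi}$.
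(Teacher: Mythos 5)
Your overall architecture is the same as the paper's: sandwich the discrete event between path events constrained only through the terminal value, apply the Sample-Path LDP of Theorem \ref{Theorem 1}, and then argue that the endpoint-interval set is (in effect) an $I_{\pi}$-continuity set. The differences are in bookkeeping: the paper sandwiches between $\{\chi_{n}\in\mathcal{Q}_{s,\delta-\nu}\}$ and $\{\chi_{n}\in\mathcal{Q}_{s,\delta+\nu}\}$ and removes $\nu$ by invoking continuity of $I_{\pi}$, whereas you use an open inner set and shrinking closed outer sets and remove the outer gap by goodness plus lower semicontinuity; that part of your argument is tighter than the paper's, and your endpoint surgery supplies a proof of the continuity-set property that the paper only asserts. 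For $\pi$ bounded away from $0$ and $1$ (the regime in which the lemma is actually invoked in the first part of the proof of Theorem \ref{Theorem 2}) your proof is complete and essentially equivalent to the paper's.

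The gap is exactly where you suspect it, and neither of your proposed fixes closes it. Take $\pi\in\mathcal{U}$ with $\pi>0$ on $[0,1/2)$ and $\pi\equiv0$ on $[1/2,1]$ (e.g. $\pi(x)=\max\{1/2-x,0\}$), and $s=1/2$. Any $\varphi$ with $\varphi_{1}\in(s,s+\delta)$ must have $d\varphi_{\tau}>0$ on a set of positive measure where $\varphi_{\tau}/\tau>1/2$, so $S_{\pi}[\varphi]=-\infty$ and $\inf_{U}I_{\pi}=+\infty$; yet a trajectory with $\varphi_{\tau}/\tau<1/2$ for $\tau<1$ and $\varphi_{1}=1/2$ has finite cost, since $\log\pi(\varphi_{\tau}/\tau)\sim\log(1-\tau)$ is integrable, so $\inf_{\mathcal{Q}_{s,\delta}}I_{\pi}<\infty$. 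Hence the identification $\inf_{U}I_{\pi}=\inf_{\mathcal{Q}_{s,\delta}}I_{\pi}$ is simply false here: no bending moves the terminal value into $(s,s+\delta)$ at finite cost, and passing to $\pi^{*}$ of Corollary \ref{cor:CorrollariINIT} changes nothing because $z_{+}^{*}=1/2=s$ and $\pi^{*}=\pi$. In such boundary cases the open-set lower bound of Theorem \ref{Theorem 1} is intrinsically too weak (sets of the form $\{\varphi_{1}\in(s-\epsilon,s+\delta)\}$ are not contained in the discrete event), and the statement has to come from the finer machinery the paper deploys for general $\pi$ — the restricted classes $\mathcal{Q}_{k}$ of Section \ref{Section3.1.3}, the $\pi_{\epsilon}^{\pm}$ approximation, and the combinatorial shift argument of Section \ref{Section3.2.1} — not from the continuity-set identification alone. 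To be fair, the paper's own one-line assertion that $\mathcal{Q}_{s,\delta}$ is an $I_{\pi}$-continuity set suffers from the same degenerate cases; but as a self-contained argument your final step fails there, so you should either restrict to $\pi\in(0,1)$, or require $[s,s+\delta]$ to lie strictly inside the accessible band $(z_{-}^{*},z_{+}^{*})$, or provide the finer lower bound.
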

\begin{proof}
Since $\mathcal{Q}_{s,\delta}:={\textstyle \bigcup_{\, u-s\in\left[0,\delta\right]}}\,\mathcal{Q}_{u}$
is an $I_{\pi}-$continuity set when $s\in\left[0,1\right]$ and $\delta>0$,
by Theorem \ref{Theorem 1} we have 
\begin{equation}
\lim_{n\rightarrow\infty}n^{-1}\log\mathbb{P}\left(\chi_{n}\in\mathcal{Q}_{s,\delta}\right)=-\inf_{\varphi\in\mathcal{Q}_{s,\delta}}I_{\pi}\left[\varphi\right].
\end{equation}
Then, let $0<\nu<\delta$ so that we can write
\begin{multline}
-\inf_{\varphi\in\mathcal{Q}_{s,\delta-\nu}}I_{\pi}\left[\varphi\right]=\lim_{n\rightarrow\infty}n^{-1}\log\mathbb{P}\left(\chi_{n}\in\mathcal{Q}_{s,\delta-\nu}\right)\leq\\
\leq\lim_{n\rightarrow\infty}n^{-1}\log\mathbb{P}\left(\left\lfloor sn\right\rfloor \leq X_{n,n}\leq\left\lfloor \left(s+\delta\right)n\right\rfloor \right)\leq\\
\leq\lim_{n\rightarrow\infty}n^{-1}\log\mathbb{P}\left(\chi_{n}\in\mathcal{Q}_{s,\delta+\nu}\right)=-\inf_{\varphi\in\mathcal{Q}_{s,\delta+\nu}}I_{\pi}\left[\varphi\right].\label{eq:-4}
\end{multline}
Since $I_{\pi}$ is continuous on $\left(\mathcal{Q},\left\Vert \cdot\right\Vert \right)$
and $\mathcal{Q}_{s,\delta'}\subset\mathcal{Q}_{s,\delta}\subset\mathcal{Q}$
for every $\delta'<\delta$, we can take the limit $\nu\rightarrow0$
and the proof is completed.
\end{proof}

\subsubsection{Proof of Theorem \ref{Theorem 2}.}

\label{Section3.2.1}Before starting, we remind some notation. Let
$\varphi:=\left\{ \varphi_{\tau}:\tau\in\left[0,1\right]\right\} $
and let $Y_{n,k}\left(\varphi\right):=n\varphi_{k}$, $\delta Y_{n,k}\left(\varphi\right):=n\left(\varphi_{k+1}-\varphi_{k}\right)$
as in Eq. (\ref{eq:2.10}). We also define the set of trajectories
\begin{equation}
\mathcal{Q}_{n,k}:=\left\{ \varphi\in\mathcal{Q}_{n}:\, Y_{n,n}\left(\varphi\right)=k\right\} ,
\end{equation}
where $\mathcal{Q}_{n}$ is the support of $\chi_{n}$ as defined
in Eq. (\ref{eq:2.9-1}). As for Theorem \ref{Theorem 1} we first
prove the result for $\pi\in\left(0,1\right)$
\begin{proof}
Let $\pi\in\left(0,1\right)$. We start from the variational representation
of $\mathbb{P}\left(\chi_{n}=\varphi\right)$ in Eq. (\ref{eq:2.1-1}):
by Lemma \ref{Lemma 5} we can rewrite $\mathbb{P}\left(X_{n,n}=k\right)$
as 
\begin{equation}
\mathbb{P}\left(X_{n,n}=k\right)=\sum_{\varphi\in\mathcal{Q}_{n,k}}\mathbb{P}\left(\chi_{n}=\varphi\right)=\sum_{\varphi\in\mathcal{Q}_{n,k}}e^{nS_{\pi}\left[\varphi\right]+O\left(n\cdot W_{\pi}\left(1/n\right)\right)}.\label{eq:3.5}
\end{equation}
First, we observe that the following inequalities holds: 
\begin{equation}
\mathbb{P}\left(X_{n,n}=k\right)\leq\mathbb{P}\left(k\leq X_{n,n}\leq k'\right)\leq\left(k'-k\right)\sup_{k\leq i\leq k'}\mathbb{P}\left(X_{n,n}=i\right):
\end{equation}
by defining $k^{*}:\,\mathbb{P}\left(X_{n,n}=k^{*}\right)=\sup_{k\leq i\leq k'}\mathbb{P}\left(X_{n,n}=i\right)$
we can rewrite them as 
\begin{equation}
\left|\log\mathbb{P}\left(k\leq X_{n,n}\leq k'\right)-\log\mathbb{P}\left(X_{n,n}=k\right)\right|\leq\log\left(k'-k\right)+\left|\log\mathbb{P}\left(X_{n,n}=k^{*}\right)-\log\mathbb{P}\left(X_{n,n}=k\right)\right|.\label{eq:-9}
\end{equation}
Let $T^{0}\left(\varphi\right):=\left\{ i\in\mathbb{N}\,:\,\delta Y_{n,i}\left(\varphi\right)=0\right\} $,
$T^{1}\left(\varphi\right):=\left\{ i\in\mathbb{N}\,:\,\delta Y_{n,i}\left(\varphi\right)=1\right\} $
and define the operator $\hat{u}_{h}$ such that $\hat{u}_{h}\varphi:=\left\{ (\hat{u}_{h}\varphi)_{\tau}:\,\tau\in\left[0,1\right]\right\} $,
\begin{equation}
(\hat{u}_{h}\varphi)_{\tau}:=\varphi_{\tau}+\left(\tau-{\textstyle \frac{1}{n}}\left\lfloor n\tau\right\rfloor \right)\mathbb{I}_{\left\{ n\tau\in\left[h-1,h\right]\right\} }+{\textstyle \frac{1}{n}}\mathbb{I}_{\left\{ n\tau\in\left[h,n\right]\right\} }.
\end{equation}
If we apply $m$ times this operator to $\varphi\in\mathcal{Q}_{n,k}$
with a suitable sequence of $h_{i}$, $1\leq i\leq m$ we can get
a $\hat{u}_{h_{m}}\,...\,\hat{u}_{h_{1}}\varphi\in\mathcal{Q}_{n,k+m}$.
By simple combinatorial arguments it's easy to convince that the following
relation holds
\begin{multline}
\sum_{\varphi\in\mathcal{Q}_{n,k+m}}e^{nS_{\pi}\left[\varphi\right]}=\prod_{j=1}^{m}\left(k+j\right)^{-1}\sum_{\varphi\in\mathcal{Q}_{n,k}}\ \ \,\ \sum_{h_{1}\in T^{0}\left(\varphi\right)}\ \sum_{h_{2}\in T^{0}\left(\hat{u}_{h_{1}}\varphi\right)}...\\
...\sum_{h_{m-1}\in T^{0}\left(\hat{u}_{h_{m-2}}\,...\,\hat{u}_{h_{1}}\varphi\right)}\ \sum_{h_{m}\in T^{0}\left(\hat{u}_{h_{m-1}}\,...\,\hat{u}_{h_{1}}\varphi\right)}\, e^{nS_{\pi}\left[\hat{u}_{h_{m}}\,...\,\hat{u}_{h_{1}}\varphi\right]};\label{eq:3.12}
\end{multline}
the product comes from noticing that $\left|T^{1}\left(\varphi\right)\right|=k+j$
when $\varphi\in\mathcal{Q}_{n,k+j}$: it corrects for the exceeding
copies of the same path which arise from summing over the $T^{0}\left(\,...\,\hat{u}_{h_{2}}\hat{u}_{h_{1}}\varphi\right)$
sets. Now, since by definition $\left\Vert \hat{u}_{h_{m}}\,...\,\hat{u}_{h_{1}}\varphi-\varphi\right\Vert =m/n$,
from Lemma \ref{Lemma 5-1} we have
\begin{equation}
n\left|S_{\pi}\left[\hat{u}_{h_{m}}\,...\,\hat{u}_{h_{1}}\varphi\right]-S_{\pi}\left[\varphi\right]\right|\leq n\, W_{\pi}\left(m/n\right),\label{eq:-2-1}
\end{equation}
and, given that$\left|T^{0}\left(\hat{u}_{h_{i}}\,...\,\hat{u}_{h_{1}}\varphi\right)\right|=n-k+i-1$
when $\varphi_{n}\in\mathcal{Q}_{n,k}$, from Eq.s. (\ref{eq:3.5}),
(\ref{eq:3.12}) and (\ref{eq:-2-1}) we can conclude that
\begin{multline}
\left|\log\mathbb{P}\left(X_{n,n}=k+m\right)-\log\mathbb{P}\left(X_{n,n}=k\right)\right|\leq\\
\leq\left|{\textstyle \sum_{i=1}^{m}}\log\left(\left(n-k+i-1\right)/\left(k+i\right)\right)\right|+n\, W_{\pi}\left(m/n\right)+O\left(W_{\pi}\left(1/n\right)\right).\label{eq:-3-1}
\end{multline}
Then, we can put together Eq.s. (\ref{eq:3.5}), (\ref{eq:-9}), (\ref{eq:-3-1})
and the inequality $k\leq k^{*}\leq k'$ to get the bound 
\begin{multline}
\left|\log\mathbb{P}\left(k\leq X_{n,n}\leq k'\right)-\log\mathbb{P}\left(X_{n,n}=k\right)\right|\leq\\
\leq|{\textstyle \sum_{i=1}^{k'-k}}\log\left(\left(n-k+i-1\right)/\left(k+i\right)\right)|+n\, W_{\pi}\left(m/n\right)+\log\left(k'-k\right)+O\left(W_{\pi}\left(1/n\right)\right),\label{eq:-3-1-1}
\end{multline}
By taking $k=\left\lfloor sn\right\rfloor $, $k'=\left\lfloor \left(s+\delta\right)n\right\rfloor $,
then the limit $n\rightarrow\infty$, we find that the sum in the
above inequality has the following limiting behavior 
\begin{multline}
\lim_{n\rightarrow\infty}n^{-1}{\textstyle \sum_{i=1}^{k'-k}}\,{\textstyle \log\left(\frac{n-k+i-1}{k+i}\right)}=\int_{u\in\left[0,\delta\right]}du\,\log\left(\left(\bar{s}+u\right)/\left(s+u\right)\right)=\\
=H_{1}\left(s+\delta\right)-H_{1}\left(s\right)-H_{1}\left(\bar{s}+\delta\right)+H_{1}\left(\bar{s}\right)=:H_{2}\left(s,\delta\right),\label{eq:}
\end{multline}
where $H_{1}\left(s\right)=s-s\log s$. Then, applying Lemma \ref{lemma9}
and the above relation to Eq. (\ref{eq:-3-1-1}) we finally obtain
the bound 
\begin{equation}
\left|\phi\left(s\right)+{\textstyle \inf_{\varphi\in\mathcal{Q}_{s,\delta}}}I_{\pi}\left[\varphi\right]\right|\leq\left|H_{2}\left(s,\delta\right)\right|+W_{\pi}\left(\delta\right)\label{eq:-5}
\end{equation}
In the end, since $I_{\pi}$ is continuous on $\left(\mathcal{Q},\left\Vert \cdot\right\Vert \right)$
and $\mathcal{Q}_{s}\subset\mathcal{Q}_{s,\delta}\subset\mathcal{Q}$,
taking $\delta\rightarrow0$ in the above equation will complete our
proof. Notice that our bound diverges for $s\in\left\{ 0,1\right\} $,
but in such cases the theorem's statement is trivially verified by
a direct computation, hence we can assume $s\in\left(0,1\right)$.

The extension to the case $\pi\in\left[0,1\right]$ can be performed
by proving the above result for $\pi_{\epsilon}^{+}$ and $\pi_{\epsilon}^{-}$
for each subset $\mathcal{Q}_{k}$, $1\leq k\leq N^{*}$ and then
take $\epsilon\rightarrow0$ as in the proof of Theorem \ref{Theorem 1}.
As example, for $\pi_{\epsilon}^{+}$ and $s\in G_{\pi;i}$ we can
consider 
\begin{equation}
\mathcal{Q}_{s,\delta}\left[G_{\pi;i,j}^{\,0},\, T_{i+1,j}\right]:=\bigcup_{\, u-s\in\left[0,\delta\right]}\mathcal{Q}_{u}\left[G_{\pi;i,j}^{\,0},\, T_{i+1,j}\right]
\end{equation}
in place of $\mathcal{Q}_{s,\delta}$, then $\mathcal{Q}_{n,k}\cap\mathcal{Q}_{s,\delta}[G_{\pi;i,j}^{\,0},\, T_{i+1,j}]$
in place of $\mathcal{Q}_{n,k}$ and proceed as for $\pi\in\left(0,1\right)$
case. We do the same for $\pi_{\epsilon}^{-}$, with $\sigma_{i}^{+}-\epsilon$,
$\sigma_{i}^{-}+\epsilon$ in place of $\sigma_{i}^{+}$, $\sigma_{i}^{-}$
and finally use the argument at the end of the proof of Theorem \ref{Theorem 1}
to take the limit $\epsilon\rightarrow0$. The procedure described
above is quite mechanical and does not require any conceptual addition.
Then, we avoid to explicitly repeat the computations of Theorem \ref{Theorem 1},
which would result in a heavy (and messy) notation surely much less
explicative than the above statements.
\end{proof}

\subsubsection{Proof of Corollaries \ref{Corollary 3}, \ref{Corollary4} and \ref{Corollary4.1}.}

\label{Section3.2.2}Before dealing with Corollaries \ref{Corollary 3},
\ref{Corollary4} and \ref{Corollary4.1} we still need an additional
result. We start by finding conditions on $\varphi$ such $I_{\pi}\left[\varphi\right]=0$.
From Theorem \ref{Theorem 2} we found that $\phi\left(s\right)=-{\textstyle \inf_{\varphi\in\mathcal{Q}_{s}}}I_{\pi}\left[\varphi\right]$,
and since $I_{\pi}\left[\varphi\right]\geq0$ our thesis would follow
if we can find a trajectory $\varphi\in\mathcal{Q}_{s}\cap\mathcal{AC}$
such that $I_{\pi}\left[\varphi\right]=0$. The following lemma provides
the desired condition on $\varphi$
\begin{lem}
\label{lemma 12}Let $\varphi^{*}:=\left\{ \varphi_{\tau}^{*}:\tau\in\left[0,1\right]\right\} $
such that $I_{\pi}\left[\varphi^{*}\right]=0$. Then, any of such
$\varphi^{*}$ must satisfy the homogeneous differential equation
$\dot{\varphi}_{\tau}^{*}=\pi\left(\varphi_{\tau}^{*}/\tau\right)$
with $\varphi^{*}\in\mathcal{Q}\cap\mathcal{AC}$ .\end{lem}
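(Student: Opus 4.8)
The statement to prove is Lemma \ref{lemma 12}: any $\varphi^{*}\in\mathcal{Q}$ with $I_{\pi}[\varphi^{*}]=0$ is absolutely continuous and satisfies the ODE $\dot\varphi^{*}_{\tau}=\pi(\varphi^{*}_{\tau}/\tau)$. Recall $I_{\pi}[\varphi]=J[\varphi]-S_{\pi}[\varphi]$, with $J[\varphi]=\int_0^1 H(\dot\varphi_{\tau})\,d\tau$ if $\varphi\in\mathcal{AC}$ and $+\infty$ otherwise, $H(s)=s\log s+\bar s\log\bar s$, and $S_{\pi}[\varphi]=\int_{[0,1]}\bigl[d\varphi_{\tau}\log\pi(\varphi_{\tau}/\tau)+d\tilde\varphi_{\tau}\log\bar\pi(\varphi_{\tau}/\tau)\bigr]$. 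First I would note that $I_{\pi}[\varphi^{*}]=0$ forces $\varphi^{*}\in\mathcal{AC}$, since otherwise $J=+\infty$ while $S_{\pi}\le 0$ is finite-from-above, giving $I_{\pi}=+\infty$. So on $\mathcal{AC}$ we may write $d\varphi_{\tau}=\dot\varphi_{\tau}\,d\tau$, $d\tilde\varphi_{\tau}=(1-\dot\varphi_{\tau})\,d\tau$, and
\begin{equation}
I_{\pi}[\varphi^{*}]=\int_0^1\Bigl[H(\dot\varphi^{*}_{\tau})-\dot\varphi^{*}_{\tau}\log\pi(\varphi^{*}_{\tau}/\tau)-(1-\dot\varphi^{*}_{\tau})\log\bar\pi(\varphi^{*}_{\tau}/\tau)\Bigr]d\tau.
\end{equation}

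**The pointwise inequality.** The core observation is that the integrand is pointwise nonnegative and vanishes only at a unique value of $\dot\varphi^{*}_{\tau}$. Fix $\tau$ and set $p:=\pi(\varphi^{*}_{\tau}/\tau)\in[0,1]$, $q:=\dot\varphi^{*}_{\tau}\in[0,1]$ (the range of $\dot\varphi$ being $[0,1]$ comes from the definition of $\mathcal{Q}$). The integrand is
\begin{equation}
g(q;p):=q\log q+(1-q)\log(1-q)-q\log p-(1-q)\log(1-p)=q\log\frac{q}{p}+(1-q)\log\frac{1-q}{1-p},
\end{equation}
which is exactly the relative entropy $D(\mathrm{Bern}(q)\,\|\,\mathrm{Bern}(p))\ge 0$, with equality iff $q=p$. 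Hence $I_{\pi}[\varphi^{*}]=\int_0^1 g(\dot\varphi^{*}_{\tau};\pi(\varphi^{*}_{\tau}/\tau))\,d\tau\ge 0$, and $I_{\pi}[\varphi^{*}]=0$ forces $g(\dot\varphi^{*}_{\tau};\pi(\varphi^{*}_{\tau}/\tau))=0$ for Lebesgue-a.e.\ $\tau\in[0,1]$, i.e.\ $\dot\varphi^{*}_{\tau}=\pi(\varphi^{*}_{\tau}/\tau)$ a.e. Since a Lipschitz function is determined by its a.e.\ derivative, this is the claimed homogeneous ODE; and $\varphi^{*}\in\mathcal{Q}\cap\mathcal{AC}$ as already established. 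One small point: I should record that $S_{\pi}[\varphi^{*}]>-\infty$ is automatic here because $I_{\pi}[\varphi^{*}]=0$ and $J[\varphi^{*}]\le\log 2<\infty$ (from $H\le 0$, or rather $-\log 2\le H\le 0$), so no $\log 0$ pathology occurs along $\varphi^{*}$; this justifies manipulating the integrals termwise.

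**The main obstacle.** The only delicate issue is the handling of the boundary behaviour near $\tau=0$ and the possibility that $\pi(\varphi^{*}_{\tau}/\tau)\in\{0,1\}$ on a set of positive measure, where terms like $q\log p$ become $0\cdot(-\infty)$. I would dispatch this by working with the convention $0\log 0=0$ and noting that $g(q;p)$ extends continuously to $[0,1]\times[0,1]$ with values in $[0,\infty]$: if $p=0$ then $g(q;0)=+\infty$ for $q>0$ and $=0$ for $q=0$; symmetrically for $p=1$. So $I_{\pi}[\varphi^{*}]=0$ still forces $g(\dot\varphi^{*}_{\tau};\pi(\varphi^{*}_{\tau}/\tau))=0$ a.e., which in the degenerate cases reads $\dot\varphi^{*}_{\tau}=0$ when $\pi(\varphi^{*}_{\tau}/\tau)=0$ and $\dot\varphi^{*}_{\tau}=1$ when $\pi(\varphi^{*}_{\tau}/\tau)=1$ — consistent with the ODE $\dot\varphi^{*}_{\tau}=\pi(\varphi^{*}_{\tau}/\tau)$. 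The behaviour at $\tau=0$ needs no separate treatment: $\varphi^{*}_0=0$ is part of the definition of $\mathcal{Q}$, and the integral defining $I_{\pi}$ is over $[0,1]$ with a Lebesgue-a.e.\ conclusion, so the single point $\tau=0$ is irrelevant. Thus the proof reduces to (i) $\mathcal{AC}$ from finiteness, (ii) the relative-entropy identity for the integrand, (iii) nonnegativity and the equality case; the rest is bookkeeping with the $0\log 0$ convention.
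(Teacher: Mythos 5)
Your proposal is correct and follows essentially the same route as the paper: restrict to $\mathcal{Q}\cap\mathcal{AC}$ because $I_{\pi}=\infty$ otherwise, then observe that the integrand of $I_{\pi}$ is pointwise nonnegative and vanishes only when $\dot{\varphi}_{\tau}=\pi\left(\varphi_{\tau}/\tau\right)$ (the paper phrases this via the function $L\left(x,y\right)=x\log\left(y/x\right)+\bar{x}\log\left(\bar{y}/\bar{x}\right)\leq0$ with equality iff $x=y$, which is exactly your Bernoulli relative entropy up to sign). Your explicit treatment of the degenerate cases $\pi\in\left\{ 0,1\right\} $ is a minor added care that the paper leaves implicit.
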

\begin{proof}
Let $\left(x,y\right)\in\left[0,1\right]^{2}$ and $\bar{x}=1-x$,
$\bar{y}=1-y$ as usual. Then, define the function $L:\left[0,1\right]^{2}\rightarrow\left(-\infty,0\right]$
as follows:
\begin{equation}
L\left(x,y\right):=x\log\left(y/x\right)+\bar{x}\log\left(\bar{y}/\bar{x}\right).
\end{equation}
Since by Theorem \ref{Theorem 2} and Lemma \ref{Lemma 3} we have
$I_{\pi}\left[\varphi\right]=\infty$ when $\varphi\notin\mathcal{AC}$,
we can restrict the search for minimizing strategies to the set $\mathcal{Q}\cap\mathcal{AC}$,
for which $\dot{\varphi}$ exists almost everywhere. Then, for every
$\varphi\in\mathcal{Q}\cap\mathcal{AC}$ we can write $I_{\pi}\left[\varphi\right]$
as 
\begin{equation}
I_{\pi}\left[\varphi\right]=-\int_{\tau\in\left[0,1\right]}d\tau\, L\left(\dot{\varphi}_{\tau},\pi\left(\varphi_{\tau}/\tau\right)\right).
\end{equation}
$L$ is a negative concave function for every pair $\left(x,y\right)\in\left[0,1\right]^{2}$,
with $L\left(x,y\right)=0$ if and only if $x=y$. Hence, any choice
of $\varphi$ for which $I_{\pi}\left[\varphi\right]=0$ must satisfy
the condition $\dot{\varphi}_{\tau}=\pi\left(\varphi_{\tau}/\tau\right)$
for every $\tau\in\left[0,1\right]$. 
\end{proof}
We can now prove the corollaries of Theorem \ref{Theorem 2} concerning
optimal trajectories. Since Corollary \ref{Corollary 3} is an almost
obvious consequence of \ref{Corollary4} and \ref{Corollary4.1},
we first concentrate on the last two, and prove Corollary \ref{Corollary 3}
in the end of this subsection. 
\begin{proof}
Lemma \ref{lemma 12} states that every trajectory for which $I_{\pi}\left[\varphi^{*}\right]=0$
is in $\mathcal{AC}$ and must satisfy the homogeneous differential
equation $\dot{\varphi}_{\tau}^{*}=\pi\left(\varphi_{\tau}^{*}/\tau\right)$.
Then our zero-cost trajectory, if existent, must be a solution to
the homogeneous Cauchy Problem
\begin{equation}
\dot{\varphi}_{\tau}^{*}=\pi\left(\varphi_{\tau}^{*}/\tau\right),\,\varphi_{1}^{*}=s.\label{eq:3.17}
\end{equation}
To characterize the solution we first define $u^{*}:\left[0,1\right]\rightarrow\left[0,1\right]$
as
\begin{equation}
u^{*}:=\left\{ u_{\tau}^{*},\tau\in\left[0,1\right]:u_{\tau}^{*}=\varphi_{\tau}^{*}/\tau\right\} ,
\end{equation}
such that we can rewrite the Cauchy problem (\ref{eq:3.17}) as 
\begin{equation}
\dot{u}_{\tau}^{*}={\textstyle \frac{1}{\tau}}\left[\pi\left(u_{\tau}^{*}\right)-u_{\tau}^{*}\right],\, u_{1}^{*}=s.\label{eq:4.41}
\end{equation}
If $a_{\pi,i}=0$ then $\pi\left(s\right)-s=0$ for $s\in K_{\pi,i}$,
and the solution is trivially $u^{*}=s$, then we concentrate on $a_{\pi,i}\neq0$.
We recall that for $a_{\pi,i}\neq0$ the boundary $\partial K_{\pi,i}$
of $K_{\pi,i}$ is a set of two isolated points. Then, let $\partial K_{\pi,i}=\{s_{i}^{*},s_{i}^{\dagger}\}$
with
\begin{equation}
s_{i}^{*}:=\mathbb{I}_{\left\{ a_{\pi,i}=1\right\} }\inf K_{\pi,i}+\mathbb{I}_{\left\{ a_{\pi,i}=-1\right\} }\sup K_{\pi,i},
\end{equation}
\begin{equation}
s_{i}^{\dagger}:=\mathbb{I}_{\left\{ a_{\pi,i}=-1\right\} }\inf K_{\pi,i}+\mathbb{I}_{\left\{ a_{\pi,i}=1\right\} }\sup K_{\pi,i},
\end{equation}
such that $\pi\left(s\right)-s$, $s\in K_{\pi,i}$ is always decreasing
in the neighborhood of $s_{i}^{*}$and increasing in that of $s_{i}^{\dagger}$
at least if $1\leq i\leq N-1$.

First, we notice that both constant trajectories $u_{\tau}^{*}=s_{i}^{\dagger}$
and $u_{\tau}^{*}=s_{i}^{*}$ satisfy the Cauchy problem in Eq. (\ref{eq:4.41}).
To simplify the exposition, we consider $a_{\pi,i}=-1$, such that
$s_{i}^{\dagger}<s_{i}^{^{*}}$ and, by Eq. (\ref{eq:4.41}), $u_{\tau}^{*}$
must be a decreasing function of $\tau\in\left[0,1\right]$ with $u_{\tau}^{*}\in\left[u_{1}^{*},\, u_{0}^{*}\right]\subseteq K_{\pi,i}\cup\partial K_{\pi,i}$. 

Given that, we have only two possible kinds of optimal trajectory
$u_{\tau}^{*}$ for the variational problem with $s\in K_{\pi,i}\cup\partial K_{\pi,i}$
. The first is that $u_{\tau}^{*}$ decreases from some $u_{0}^{*}<s_{i}^{*}$
to $u_{1}^{*}=s$, while the second is such that $u_{\tau}^{*}=s_{i}^{*}$
constant from $\tau=0$ to some $\tau_{s,i}^{*}\in\left[0,1\right)$,
and then it decreases from $s_{i}^{*}$to eventually reach $s$ at
$\tau=1$. Then, define 
\begin{equation}
F_{\pi}\left(s,u\right):=\int_{u}^{s}\frac{dz}{\pi\left(z\right)-z}
\end{equation}
for some $s\in K_{\pi,i}$, so that the solution to the Cauchy problem
can be written in implicit form as $F_{\pi}\left(s,u_{\tau}^{*}\right)=-\log\left(\tau\right)$.
We can easily see that $\tau\left(u\right)=e^{-F_{\pi}\left(s,u\right)}$
is a decreasing function with $\tau\left(u\right)=0$ only if $F_{\pi}\left(s,u\right)=\infty$.
Since by definition $F_{\pi}\left(s,u\right)$ can diverge only for
$u\rightarrow s_{i}^{*}$ we conclude that only trajectories of the
second kind, with $u_{\tau}^{*}=s_{i}^{*}$ until some $\tau_{s,i}^{*}\in\left[0,1\right)$,
can meet our requirements for being optimal. Moreover, we can compute
$\tau_{s,i}^{*}$ by integrating backward in time the solution from
$\tau=1$. We find that 
\begin{equation}
\tau_{s,i}^{*}:=\exp(-{\textstyle \lim_{\, a_{\pi,i}\left(u-s_{i}^{*}\right)\rightarrow0^{+}}}\left|F_{\pi}\left(s,u\right)\right|),
\end{equation}
where the above expression holds for both $a_{\pi,i}=1$ and $a_{\pi,i}=-1$.
Define the inverse function $F_{\pi,s}^{-1}:(\tau_{s,i}^{*},1]\rightarrow\left(s,s_{i}\right]$
of $\pi$ on $\left(s,s_{i}\right]$: 
\begin{equation}
F_{\pi,s}^{-1}:=\left\{ F_{\pi,s}^{-1}\left(q\right),\, q\in\left[0,\log\left(1/\tau_{s,i}^{*}\right)\right):\, F_{\pi,s}\left(F_{\pi,s}^{-1}\left(q\right)\right)=q\right\} 
\end{equation}
Then we can write the global solution to our Cauchy problem as 
\begin{equation}
u_{\tau}^{*}:=F_{\pi,s}^{-1}\left(\log\left(1/\tau\right)\right)\,\mathbb{I}_{\{\tau\in(\tau_{s,i}^{*},1]\}}+s_{i}^{*}\,\mathbb{I}_{\{\tau\in[0,\tau_{s,i}^{*}]\}},
\end{equation}
The same reasoning can be obviously applied to the case $a_{\pi,i}=1$,
with $\dot{u}_{\tau}^{*}>0$ and $u_{\tau}^{*}$ increasing in $\tau$.
We remark that the homogeneity of the above solution depends critically
on the integrability of $1/\left|\pi\left(u\right)-u\right|$ when
$\left|u-s_{i}^{*}\right|\rightarrow0$: if $\lim_{a_{\pi,i}\left(u-s_{i}^{*}\right)\rightarrow0^{-}}\left|F_{\pi}\left(s,u\right)\right|=\infty$,
then obviously $\tau_{s,i}^{*}=0$, while $0<\tau_{s,i}^{*}<1$ otherwise.

A similar reasoning can be applied to the case $u_{1}^{*}=s_{i}^{\dagger}$.
Let us again consider $u_{\tau}^{*}\in K_{\pi,i}\cup\partial K_{\pi,i}$,
$a_{\pi,i}=-1$ and take $s=s_{i}^{\dagger}$ in Eq. (\ref{eq:4.41}).
Here the picture is slightly more complex, since it also depends on
the behavior of $\left|F_{\pi}\left(s,u\right)\right|$, $s<u$, as
$s-s_{i}^{\dagger}\rightarrow0^{+}$.

In general, if $\left|F_{\pi}\left(s,u\right)\right|$, $s<u$, diverges
as $s-s_{i}^{\dagger}\rightarrow0^{+}$ then it is clear that the
only possible trajectory $u_{\tau}^{*}\in K_{\pi,i}\cup\partial K_{\pi,i}$
that ends in $s_{i}^{\dagger}$ is $u_{\tau}^{*}=s_{i}^{\dagger}$.
Anyway, if $\left|F_{\pi}\left(s,u\right)\right|$ remains finite
then we can have optimal trajectories that hit $s_{i}^{\dagger}$
at some time $\tau=t<1$ and stay in $s_{i}^{\dagger}$ for the remaining
$\tau\in\left[t,1\right]$. This is equivalent to set $u_{t}^{*}=s_{i}^{\dagger}$
as boundary condition of the Cauchy Problem in Eq. (\ref{eq:4.41}),
so that the implicit expression of the optimal trajectory is $F_{\pi}\left(s_{i}^{\dagger},u_{\tau}^{*}\right)=\log\left(t\right)-\log\left(\tau\right)$,
where $t\in\left[0,1\right]$ is free parameter. Since the above expression
is simply a shifted version of that for $u_{1}^{*}\in K_{\pi,i}$,
with $s_{i}^{\dagger}$ on place of of $s$, $t/\tau$ on place of
of $\tau$ and $\theta_{i}^{*}t$, 
\begin{equation}
\theta_{i}^{*}:=\exp\left(-{\textstyle \lim_{\, a_{\pi,i}\left(u-s_{i}^{*}\right)\rightarrow0^{+}}\lim_{\, a_{\pi,i}\left(s_{i}^{\dagger}-s\right)\rightarrow0^{+}}\left|F_{\pi}\left(s,u\right)\right|}\right),
\end{equation}
on place of of $\tau_{s,i}^{*}$, we can proceed as in the case $u_{1}^{*}\in K_{\pi,i}$
to find that 
\begin{equation}
u_{\tau}^{*}:=s_{i}^{\dagger}\mathbb{I}_{\left\{ \tau\in\left(t,1\right]\right\} }+\, F_{\pi,s}^{-1}\left(\log\left(t/\tau\right)\right)\,\mathbb{I}_{\left\{ \tau\in\left(\theta_{i}^{*}t,t\right]\right\} }+s_{i}^{*}\,\mathbb{I}_{\left\{ \tau\in\left[0,\theta_{i}^{*}t\right]\right\} }.
\end{equation}
It only remains to show that there is no solution to the Cauchy Problem
in Eq. (\ref{eq:4.41}) for boundary conditions $u_{1}^{*}\in K_{\pi,0}\cup K_{\pi,N}$.
Let consider $K_{\pi,0}$, for which always we have $a_{\pi,0}=1$
(the same result for $K_{\pi,N}$ can be obtained by a similar reasoning).
Since if $K_{\pi,0}\neq\varnothing$, then $\pi\left(0\right)>0$
and in this case $s_{0}^{\dagger}=0$ is not a zero-cost trajectory.
Then, $u_{\tau}^{*}$ should increase from some $u_{0}^{*}<u_{1}^{*}$
to some $u_{1}^{*}<s_{0}^{*}$, but the general form of the Cauchy
Problem in Eq. (\ref{eq:4.41}) rules out this possibility. We conclude
that no trajectory $\varphi_{\tau}^{*}=\tau u_{\tau}^{*}$, $u_{1}^{*}\in K_{\pi,0}$
such that $I_{\pi}\left[\varphi^{*}\right]=0$ exists, and by Lemma
\ref{lemma 12} this implies that $I_{\pi}\left[\varphi\right]>0$
for every $\varphi_{\tau}=\tau u_{\tau}$ with $u_{1}\in K_{\pi,0}$
as stated in Corollary \ref{Corollary 3}.
\end{proof}

\subsection{Cumulant Generating Function.}

\label{Section 3.3}In this section we use conditional expectations
and Picard-Lindelof theorem to prove a non-linear Cauchy problem for
$\psi\left(\lambda\right)$. Since the arguments are quite standard,
we won't indulge in details except this is necessary. Then, let define
the CGF up to time $n$ 
\begin{equation}
\psi_{n}\left(\lambda\right):=n^{-1}\log\mathbb{E}\left(e^{\lambda X_{n,n}}\right),\ \lambda\in\left(-\infty,\infty\right),\label{eq:1.18-1}
\end{equation}
so that $\psi\left(\lambda\right):=\lim_{n}\psi_{n}\left(\lambda\right)$.
Hereafter we denote by $\mathbb{P}_{\lambda}$ the tilted measure
\begin{equation}
\mathbb{P}_{\lambda}\left(X_{n,n}=X\right):=\exp\left[\lambda X-n\psi_{n}\left(\lambda\right)\right]\mathbb{P}\left(X_{n,n}=X\right)
\end{equation}
and by $\mathbb{E}_{\lambda}$ the tilted expectation. First we prove
some trivial properties for $\psi_{n}\left(\lambda\right)$.
\begin{lem}
Let $\psi_{n}\left(\lambda\right)$ in Eq. (\ref{eq:1.18-1}), and
define
\begin{equation}
\gamma_{n}\left(\lambda\right):=\left(n+1\right)\left[\psi_{n+1}\left(\lambda\right)-\psi_{n}\left(\lambda\right)\right],
\end{equation}
 then $\left|\psi_{n}\left(\lambda\right)\right|\leq\lambda$ , $\partial_{\lambda}\psi_{n}\left(\lambda\right)\in\left[0,1\right]$
and \textup{$\left|\gamma_{n}\left(\lambda\right)\right|\leq2\left|\lambda\right|$}
for all $n\in\mathbb{N}$, $\lambda\in\mathbb{R}$.\end{lem}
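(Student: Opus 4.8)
The plan is to derive all three bounds from two elementary observations: the terminal count $X_{n,n}$ is a.s.\ confined to the interval $[0,n]$, and the chain is consistent in the terminal time $n$. \textbf{Step 1 (the bound on $\psi_n$).} Since $X_{n,1}$ takes values in $[0,1]$ and every increment $\delta X_{n,k}=X_{n,k+1}-X_{n,k}$ lies in $\{0,1\}$, one has $0\le X_{n,n}\le n$ almost surely. Hence for $\lambda\ge0$ the variable $e^{\lambda X_{n,n}}$ lies between $1$ and $e^{\lambda n}$, so $0\le\log\mathbb{E}(e^{\lambda X_{n,n}})\le\lambda n$ and $\psi_n(\lambda)\in[0,\lambda]$; for $\lambda<0$ the reversed inequalities give $\psi_n(\lambda)\in[\lambda,0]$. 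In either case $\left|\psi_n(\lambda)\right|\le\left|\lambda\right|$.

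\textbf{Step 2 (the derivative).} The map $\lambda\mapsto\mathbb{E}(e^{\lambda X_{n,n}})$ is finite and smooth, and since $X_{n,n}$ is bounded one may differentiate under the expectation, obtaining $\partial_\lambda\psi_n(\lambda)=n^{-1}\mathbb{E}(X_{n,n}e^{\lambda X_{n,n}})/\mathbb{E}(e^{\lambda X_{n,n}})=n^{-1}\mathbb{E}_\lambda(X_{n,n})$, the mean of $X_{n,n}$ under the tilted law $\mathbb{P}_\lambda$. As $\mathbb{P}_\lambda$ is absolutely continuous with respect to $\mathbb{P}$, we still have $0\le X_{n,n}\le n$ $\mathbb{P}_\lambda$-a.s., so $0\le\mathbb{E}_\lambda(X_{n,n})\le n$ and therefore $\partial_\lambda\psi_n(\lambda)\in[0,1]$ for every $\lambda$.

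\textbf{Step 3 (the bound on $\gamma_n$).} Write $a_n(\lambda):=n\psi_n(\lambda)=\log\mathbb{E}(e^{\lambda X_{n,n}})$, so that $\gamma_n(\lambda)=a_{n+1}(\lambda)-\tfrac{n+1}{n}a_n(\lambda)=\bigl(a_{n+1}(\lambda)-a_n(\lambda)\bigr)-\psi_n(\lambda)$. Because the transition matrix in Eq.~(\ref{eq:1.2}) depends on $k$ but not on the terminal time, the processes are consistent in $n$: $\{X_{n+1,k}:1\le k\le n\}\overset{d}{=}\{X_{n,k}:1\le k\le n\}$, in particular $X_{n+1,n}\overset{d}{=}X_{n,n}$, so $a_n(\lambda)=\log\mathbb{E}(e^{\lambda X_{n+1,n}})$. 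Conditioning on the history up to time $n$ and using $X_{n+1,n+1}=X_{n+1,n}+\delta X_{n+1,n}$ with $\delta X_{n+1,n}\in\{0,1\}$, we have $\mathbb{E}\bigl(e^{\lambda\delta X_{n+1,n}}\mid X_{n+1,n}\bigr)=\pi(X_{n+1,n}/n)\,e^{\lambda}+\bar{\pi}(X_{n+1,n}/n)$, a convex combination of $1$ and $e^{\lambda}$, hence in $[e^{-|\lambda|},e^{|\lambda|}]$. Thus $e^{-|\lambda|}\mathbb{E}(e^{\lambda X_{n+1,n}})\le\mathbb{E}(e^{\lambda X_{n+1,n+1}})\le e^{|\lambda|}\mathbb{E}(e^{\lambda X_{n+1,n}})$, i.e.\ $\left|a_{n+1}(\lambda)-a_n(\lambda)\right|\le\left|\lambda\right|$. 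Combining with $\left|\psi_n(\lambda)\right|\le\left|\lambda\right|$ from Step 1 yields $\left|\gamma_n(\lambda)\right|\le2\left|\lambda\right|$.

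There is essentially no hard obstacle here; the only points deserving a line of justification are the differentiation under the integral sign in Step 2 (immediate from the boundedness of $X_{n,n}$) and the consistency-in-$n$ of the chain used in Step 3, which is read off directly from the form of the transition matrix and the common initial law. The whole lemma is a warm-up collecting the monotonicity, Lipschitz, and almost-subadditive features of $\psi_n$ that will feed the Picard--Lindelöf argument for the Cauchy problem satisfied by $\psi$.
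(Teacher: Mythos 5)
Your proof is correct and follows essentially the same route as the paper: the first two bounds are read off from $0\le X_{n,n}\le n$ exactly as in the text, and your Step 3 decomposition $\gamma_{n}=\bigl(a_{n+1}-a_{n}\bigr)-\psi_{n}$ with a one-step conditioning bound is the same argument the paper expresses through the recursion $\mathbb{E}\bigl(e^{\lambda X_{n,k+1}}\bigr)-\mathbb{E}\bigl(e^{\lambda X_{n,k}}\bigr)=\left(e^{\lambda}-1\right)\mathbb{E}\bigl[\pi\left(x_{n,k}\right)e^{\lambda X_{n,k}}\bigr]$, i.e.\ $\gamma_{n}=-\psi_{n}+\log\{1+(e^{\lambda}-1)\mathbb{E}_{\lambda}[\pi(x_{n,n})]\}$, each term bounded by $\left|\lambda\right|$. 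Your explicit appeal to consistency in $n$ ($X_{n+1,n}\overset{d}{=}X_{n,n}$) only makes precise what the paper uses implicitly, so there is nothing to add.
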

\begin{proof}
That $\left|\psi_{n}\left(\lambda\right)\right|\leq\lambda$ follows
directly from definitions: since $0\leq X_{n,n}\leq n$, then obviously
$n^{-1}\left|\log\mathbb{E}\left(e^{\lambda X_{n,n}}\right)\right|\leq\left|\lambda\right|$.
Similarly, from $\partial_{\lambda}\psi_{n}\left(\lambda\right)=n^{-1}\mathbb{E}_{\lambda}\left(X_{n,n}\right)$
follows $\partial_{\lambda}\psi_{n}\left(\lambda\right)\in\left[0,1\right]$.
We shall now find a recursive relation for the Moment Generating Function
$\mathbb{E}\left(e^{\lambda X_{n,n}}\right)$. Consider the conditional
expectation $\mathbb{E}\left(e^{\lambda X_{n+1,n+1}}|\mathcal{F}_{n}\right)$:
from Eq. (\ref{eq:1.2}) it's quite easy to check the Moment Generating
Function obeys the following recursion rule:
\begin{equation}
\mathbb{E}\left(e^{\lambda X_{n,k+1}}\right)-\mathbb{E}\left(e^{\lambda X_{n,k}}\right)=\left(e^{\lambda}-1\right)\mathbb{E}\left[\pi\left(x_{n,k}\right)e^{\lambda X_{n,k}}\right].\label{eq:3.3}
\end{equation}
After few manipulations we can write the above relation as
\begin{equation}
\gamma_{n}\left(\lambda\right)=-\psi_{n}\left(\lambda\right)+\log\left\{ 1+\left(e^{\lambda}-1\right)\mathbb{E}_{\lambda}\left[\pi\left(x_{n,n}\right)\right]\right\} ,
\end{equation}
Since by definition $\pi\left(x\right)\in\left[0,1\right],$ then
$\mathbb{E}\left[\pi\left(x_{n,n}\right)e^{\lambda X_{n,n}}\right]\leq\mathbb{E}\left(e^{\lambda X_{n,n}}\right)$
and $\mathbb{E}_{\lambda}\left[\pi\left(x_{n,n}\right)\right]\in\left[0,1\right]$,
so that $\left|\gamma_{n}\left(\lambda\right)\right|$ can be bounded
as 
\begin{equation}
\left|\gamma_{n}\left(\lambda\right)\right|\leq\left|\psi_{n}\left(\lambda\right)\right|+\left|\log\left(1+\left|e^{\lambda}-1\right|\right)\right|\leq2\left|\lambda\right|,\label{eq:1.16}
\end{equation}
which completes the proof.
\end{proof}
From last relation we found that $\lim_{n}\left|\psi_{n+1}\left(\lambda\right)-\psi_{n}\left(\lambda\right)\right|=0$,
but this is not enough to state whether $\lim_{n}\gamma_{n}\left(\lambda\right)=0$
for every $\lambda\in\mathbb{R}$. Before presenting our proof we
still need the following lemma
\begin{lem}
\label{Lemma14}Let $\left\{ f_{n},\, n\in\mathbb{N}\right\} $ be
a bounded real sequence. Then $g_{n}:=\left(n+1\right)\left(f_{n+1}-f_{n}\right)$
either converges to $0$ or does not converge.\end{lem}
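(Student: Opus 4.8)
The plan is to prove the stated dichotomy directly: assume that $g_{n}$ converges, say $g_{n}\to L$, and show that necessarily $L=0$. The mechanism is that $f_{n+1}-f_{n}=g_{n}/(n+1)$, so $f_{n}$ is a partial sum of the series $\sum_{n}g_{n}/(n+1)$; if $L\neq0$ this series behaves like $L$ times the harmonic series, whose partial sums diverge, contradicting the boundedness of $\left\{ f_{n}\right\} $.

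Concretely, I would suppose for contradiction that $g_{n}\to L$ with $L\neq0$, and fix $\epsilon\in(0,|L|)$. Choosing $N$ so that $|g_{n}-L|<\epsilon$ for all $n\geq N$, a telescoping sum gives, for every $M>N$,
\[
f_{M}-f_{N}=\sum_{n=N}^{M-1}\frac{g_{n}}{n+1}=L\sum_{n=N}^{M-1}\frac{1}{n+1}+\sum_{n=N}^{M-1}\frac{g_{n}-L}{n+1}.
\]
The reverse triangle inequality, together with $|g_{n}-L|<\epsilon$, then yields
\[
\left|f_{M}-f_{N}\right|\ \geq\ \bigl(|L|-\epsilon\bigr)\sum_{n=N}^{M-1}\frac{1}{n+1}.
\]
Since $|L|-\epsilon>0$ is fixed and $\sum_{n\geq N}1/(n+1)=\infty$, the right-hand side tends to $+\infty$ as $M\to\infty$, so $\left\{ f_{M}\right\} $ is unbounded, contradicting the hypothesis. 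Hence $L=0$: if $g_{n}$ converges at all, its limit is $0$, which is exactly the claimed dichotomy.

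I do not anticipate a genuine obstacle here; the argument is elementary and self-contained. The only point requiring a little care is the direction of the estimate in the last display: one wants a \emph{lower} bound on $|f_{M}-f_{N}|$, obtained by controlling the error sum $\bigl|\sum(g_{n}-L)/(n+1)\bigr|\leq\epsilon\sum1/(n+1)$ against the comparably sized main term $|L|\sum1/(n+1)$, after which the divergence of the harmonic series does the rest. In the application to follow this will be used with $f_{n}=\psi_{n}(\lambda)$ and $g_{n}=\gamma_{n}(\lambda)$, which are bounded by the preceding lemma, to conclude that $\gamma_{n}(\lambda)\to0$ as soon as convergence of $\gamma_{n}(\lambda)$ has been established by other means.
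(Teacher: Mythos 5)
Your proof is correct and follows essentially the same route as the paper: telescope $f_{M}-f_{N}=\sum g_{n}/(n+1)$, note that a nonzero limit keeps $|g_{n}|$ bounded away from zero eventually, and contradict boundedness of $f_{n}$ via the divergence of the harmonic series. The only cosmetic difference is that you treat both signs at once with absolute values, while the paper handles $g>0$ and $g<0$ separately.
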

\begin{proof}
Let suppose that $g_{n}$ converges to some $g>0$. Then $h>0$ and
$\epsilon>0$ exist such that $0<\epsilon\leq g_{n}$ for $n\geq h$.
Follows that $f_{n}\geq\epsilon\sum_{k=h}^{n-1}\left(k+1\right)^{-1}+f_{h}$
would diverge for $n\rightarrow\infty$, which contradicts that $f_{n}$
is bounded. A similar reasoning taking $g<0$ will lead to the conclusion
that $g$ can be neither strictly positive nor strictly negative,
hence we must have $g=0$.
\end{proof}

\subsubsection{Proof of Theorem \ref{Theorem 4}. }

\label{Section3.3.1}Before starting we remark that even if the the
statement of Theorem \ref{Theorem 4} asks for some additional properties
for $\pi\in\mathcal{U}$, the first part of this proof, devoted to
obtain the implicit ODE (\ref{eq:1.KKK}), does not.
\begin{proof}
Lemma \ref{Lemma14} implies that if both $\lim_{n}\psi\left(\lambda\right)$
and $\lim_{n}\mathbb{E}_{\lambda}\left[\pi\left(x_{n,n}\right)\right]$
exist, then we would have $\lim_{n}\gamma_{n}\left(\lambda\right)=0$.
The existence of $\psi\left(\lambda\right)$ follows from Theorem
\ref{Theorem 2}, while, since $\pi$ is continuous and bounded, that
of $\lim_{n}\mathbb{E}_{\lambda}\left[\pi\left(x_{n,n}\right)\right]$
follows from weak convergence. Moreover, since $\psi\in\mathcal{AC}$
by definition of CGF, weak convergence also imply that 
\begin{equation}
\lim_{n\rightarrow\infty}\mathbb{E}_{\lambda}\left[\pi\left(x_{n,n}\right)\right]=\pi(\lim_{n\rightarrow\infty}\mathbb{E}_{\lambda}\left(x_{n,n}\right))=\pi\left(\partial_{\lambda}\psi\left(\lambda\right)\right).
\end{equation}
Hence, from the above relations and by Lemma \ref{Lemma14} we obtain
the following non linear implicit ODE for $\psi$: 
\begin{equation}
\psi\left(\lambda\right)=\log\left[1+\left(e^{\lambda}-1\right)\pi\left(\partial_{\lambda}\psi\left(\lambda\right)\right)\right].
\end{equation}
The above ODE holds for every $\pi\in\mathcal{U}$, but its explicitation
obviously require that $\pi$ is invertible at least in the co-domain
of $\partial_{\lambda}\psi\left(\lambda\right)$. By Corollary \ref{Corollary 3}
we know that $\partial_{\lambda}\psi\left(\lambda\right)\in\left[z_{-}^{*},\inf C_{\pi}\right)$
for $\lambda\in\left(-\infty,0\right]$ and $\partial_{\lambda}\psi\left(\lambda\right)\in\left(\sup\, C_{\pi},z_{+}^{*}\right]$
for $\lambda\in\left[0,\infty\right)$, then we can restrict our invertibility
requirements to those domains. Notice that since for $\lambda\in\left[0,\infty\right)$
\begin{equation}
{\textstyle \inf_{\lambda}\left\{ \pi\left(\partial_{\lambda}\psi\left(\lambda\right)\right)\right\} }\leq\inf_{\lambda}\,\{\frac{{\textstyle e^{\psi\left(\lambda\right)}-1}}{e^{\lambda}-1}\}\leq\frac{e^{\psi\left(\lambda\right)}-1}{e^{\lambda}-1}{\textstyle \leq}\sup_{\lambda}\,\{\frac{e^{\psi\left(\lambda\right)}-1}{e^{\lambda}-1}\}\leq\sup_{\lambda}\left\{ \pi\left(\partial_{\lambda}\psi\left(\lambda\right)\right)\right\},\label{eq:4.9}
\end{equation}
then also $(e^{\psi\left(\lambda\right)}-1)/\left(e^{\lambda}-1\right)$
has co-domain $\left(\sup\, C_{\pi},\pi\left(z_{+}^{*}\right)\right]$. Similarly,
for $\lambda\in\left(-\infty,0\right]$, we find a co-domain $\left[\pi\left(z_{+}^{*}\right),\inf C_{\pi}\right)$
as for $\pi\left(\partial_{\lambda}\psi\left(\lambda\right)\right)$.

Let $\pi\in\mathcal{U}$ be an invertible function on $\left[z_{-}^{*},\inf C_{\pi}\right)$,
as required by the statement of Theorem \ref{Theorem 4}, and denote
by $\pi_{-}^{-1}:\left[\pi\left(z_{-}^{*}\right),\pi(\inf C_{\pi})\right)\rightarrow\left[z_{-}^{*},\inf C_{\pi}\right)$
its inverse. Moreover, let $\psi_{-}\left(\lambda_{-}^{*}\right)=\psi_{-}^{*}$
for some $\lambda_{-}^{*}\in\left(-\infty,0\right)$. Then, $\psi\left(\lambda\right)=\psi_{-}\left(\lambda\right)$,
with $\psi_{-}\left(\lambda\right)$ solution to the Cauchy problem
\begin{equation}
\partial_{\lambda}\psi_{-}\left(\lambda\right)=\pi_{-}^{-1}\left({\textstyle \frac{e^{\psi_{-}\left(\lambda\right)}-1}{e^{\lambda}-1}}\right),\,\psi_{-}\left(\lambda_{-}^{*}\right)=\psi_{-}^{*},\label{eq:1.21-1}
\end{equation}
If $\pi_{-}^{-1}\in\mathcal{AC}$ and Lipschitz, then we can apply
the Picard-Lindelof theorem, which ensure the existence and uniqueness
of $\psi_{-}$ for any $\lambda\in\left(-\infty,0\right)$. The same
proceeding can be applied to the case $\lambda\in\left(0,\infty\right)$:
let $\pi_{+}^{-1}:\left(\pi(\sup\, C_{\pi}),\pi\left(z_{+}^{*}\right)\right]\rightarrow\left(\sup\, C_{\pi},z_{+}^{*}\right]$
the inverse of $\pi$ on $\left(\sup\, C_{\pi},z_{+}^{*}\right]$,
let $\pi_{+}^{-1}\in\mathcal{AC}$ and Lipschitz, then for $\lambda\in\left(0,\infty\right)$
we have $\psi\left(\lambda\right)=\psi_{+}\left(\lambda\right)$,
with $\psi_{+}\left(\lambda\right)$ solution to the Cauchy problem
\begin{equation}
\partial_{\lambda}\psi_{+}\left(\lambda\right)=\pi_{+}^{-1}\left({\textstyle \frac{e^{\psi_{+}\left(\lambda\right)}-1}{e^{\lambda}-1}}\right),\,\psi_{+}\left(\lambda_{+}^{*}\right)=\psi_{+}^{*},\label{eq:1.21-1-1}
\end{equation}
and this completes our proof. Finally, that $\partial_{\lambda}\psi\left(\lambda\right)$
is continuous comes from the fact that both $\pi_{\pm}^{-1}$ and
$\left(e^{\psi_{\pm}\left(\lambda\right)}-1\right)/\left(e^{\lambda}-1\right)$
are continuous functions by definitions.

We proved that solutions are unique if $\lambda_{+}^{*}\in\left(0,\infty\right)$
and $\lambda_{+}^{*}\in\left(0,\infty\right)$ but since for $\lambda=0^{\pm}$
and $\lambda=\pm\infty$ the Lipschitz continuity in $\psi$ required
by the Picard-Lindelof theorem is not fulfilled we need an additional
argument to prove that the Caucy-Problem 
\begin{equation}
\partial_{\lambda}\psi_{+}\left(\lambda\right)=\pi_{+}^{-1}\left({\textstyle \frac{e^{\psi_{+}\left(\lambda\right)}-1}{e^{\lambda}-1}}\right),\,\lim_{\lambda\rightarrow0^{+}}\,\partial_{\lambda}\psi_{+}\left(\lambda_{+}^{*}\right)=\pi_{+}\left(\sup\, C_{\pi}\right),\,\lim_{\lambda\rightarrow\infty}\,\partial_{\lambda}\psi_{+}\left(\lambda_{+}^{*}\right)=z_{+}^{*},\label{eq:1.21-1-1-1-1}
\end{equation}
has a unique solution. Since the other cases can be shown by the same
way, we prove the result only for $\lambda>0$ and $\pi_{+}^{-1}$
strictly increasing. 

Let $\lambda>0$ and suppose that two solutions $\psi_{+}^{1}\left(\lambda\right)$
and $\psi_{+}^{2}\left(\lambda\right)$ exists for the Cauchy problem
\begin{equation}
\partial_{\lambda}\psi_{+}\left(\lambda\right)=\pi_{+}^{-1}\left({\textstyle \frac{e^{\psi_{+}\left(\lambda\right)}-1}{e^{\lambda}-1}}\right),\,\lim_{\lambda\rightarrow0^{+}}\,\partial_{\lambda}\psi_{+}\left(\lambda_{+}^{*}\right)=\pi_{+}\left(\sup\, C_{\pi}\right),\label{eq:1.21-1-1-1}
\end{equation}
such that $\psi_{+}^{1}\left(\epsilon\right)>\psi_{+}^{2}\left(\epsilon\right)$
for some $\epsilon>0$. Since we required $\pi_{+}^{-1}$ to be invertible,
$\mathcal{AC}$ and Lipschitz, it can be either strictly increasing
or strictly decreasing. For this setting we chose $\pi_{+}^{-1}$
strictly increasing, and then some $L^{*}>0$ exists such that
\[
\left[\partial_{\lambda}\psi_{+}^{1}\left(\lambda\right)-\partial_{\lambda}\psi_{+}^{2}\left(\lambda\right)\right]_{\lambda=\epsilon}>L^{*}\left(e^{\psi_{+}^{1}\left(\epsilon\right)}-e^{\psi_{+}^{2}\left(\epsilon\right)}\right).
\]
Since $\partial_{\lambda}\psi\left(\lambda\right)\in\left[0,1\right]$
by definition, then $\lim_{\lambda\rightarrow\infty}\partial_{\lambda}\psi_{+}^{1}\left(\lambda\right)\neq\lim_{\lambda\rightarrow\infty}\partial_{\lambda}\psi_{+}^{2}\left(\lambda\right)$
unless $\psi_{+}^{1}\left(\epsilon\right)=\psi_{+}^{2}\left(\epsilon\right)$.
This implies that the Cauchy problem in Eq. (\ref{eq:1.21-1-1-1-1})
has a unique solution that satisfies both boundary conditions. This
completes our proof.
\end{proof}

\subsubsection{Linear urn functions.}

\label{Section3.3.2}The last proof of this section is that of Corollary
\ref{Corollary8}, which gives the shape of $\psi$ in case $\pi$
is a linear function.
\begin{proof}
Let $\pi\left(s\right)$ as in Eq. (\ref{eq:2.12.2}). To ensure that
$\pi\left(0\right)>0$ and $\pi\left(1\right)<1$ we need at least
that $a>0$ and $a+b<1$. Given these conditions, let first consider
the case $\lambda>0$, so that the ODE to solve is 
\begin{equation}
a+b\,\partial{}_{\lambda}\psi\left(\lambda\right)=\frac{e^{\psi\left(\lambda\right)}-1}{e^{\lambda}-1}.\label{eq:4.11}
\end{equation}
We use the transformations $y\left(z\left(\lambda\right)\right)=e^{-\psi\left(\lambda\right)}-1$,
$z\left(\lambda\right)=1-e^{-\lambda}$, so that for $\lambda\in\left[0,\infty\right)$
we have $\psi\left(\lambda\left(z\right)\right)=-\log\left(1+y\left(z\right)\right)$,
$\lambda\left(z\right)=-\log\left(1-z\right)$ and
\begin{equation}
\partial{}_{z}y\left(z\right)=\left[{\textstyle \frac{a}{b\left(1-z\right)}+\frac{1}{bz}}\right]y\left(z\right)+\left[{\textstyle \frac{a}{b\left(1-z\right)}}\right],\label{eq:4.12}
\end{equation}
with $z\in\left[0,1\right]$. By Laplace method, we can rewrite the
above equation as 
\begin{equation}
\partial{}_{z}\left[y\left(z\right)\left(1-z\right)^{\frac{a}{b}}z^{-\frac{1}{b}}\right]={\textstyle \frac{a}{b}}\left(1-z\right)^{\frac{a}{b}-1}z^{-\frac{1}{b}}.
\end{equation}
Then, we define the function
\begin{equation}
B\left(\alpha,\beta;x_{1},x_{2}\right)=\int_{x_{1}}^{x_{2}}dt\,\left(1-t\right)^{\alpha-1}t^{\beta-1}.
\end{equation}
If $b>0$, since $a>0$ we have that $\left(1-z\right)^{\frac{a}{b}}z^{-\frac{1}{b}}$
is regular at $z=1$, then 
\begin{equation}
y\left(z;b>0\right)=\left(1-z\right)^{-{\textstyle \frac{a}{b}}}z^{\frac{1}{b}}\left[K_{1}^{*}-{\textstyle \frac{a}{b}}B\left({\textstyle {\textstyle \frac{a}{b}},\frac{b-1}{b};z,1}\right)\right],
\end{equation}
where $K_{1}^{*}$ depends on the initial conditions. Since when $\lambda\rightarrow\infty$
we must have $\partial{}_{\lambda}\psi\left(\lambda\right)\rightarrow1$,
from Eq. (\ref{eq:4.11}) we can write $\lim_{z\rightarrow1}y\left(z;b>0\right)=-1$.
Then, it can be shown that 
\begin{equation}
\lim_{z\rightarrow1}\left(1-z\right)^{-{\textstyle \frac{a}{b}}}z^{\frac{1}{b}}B\left({\textstyle {\textstyle \frac{a}{b}},\frac{b-1}{b};z,1}\right)=\frac{b}{a}.
\end{equation}
It follows that $K_{1}^{*}=0$, and substituting $y\left(z\left(\lambda\right)\right)=e^{-\psi\left(\lambda\right)}-1$,
$z\left(\lambda\right)=1-e^{-\lambda}$ we find the following expression
for $\lambda>0$, $b>0$ 
\begin{equation}
e^{-\psi_{+}\left(\lambda;b>0\right)}=1-{\textstyle \frac{a}{b}}e^{\frac{a}{b}\lambda}\left(1-e^{-\lambda}\right)^{\frac{1}{b}}B\left({\textstyle {\textstyle \frac{a}{b}},\frac{b-1}{b};1-e^{-\lambda},1}\right)
\end{equation}
If $b<0$, we have instead that $\left(1-z\right)^{\frac{a}{b}}z^{-\frac{1}{b}}$
is regular at $z=0$ and we take
\begin{equation}
y\left(z;d<0\right)=\left(1-z\right)^{-{\textstyle \frac{a}{b}}}z^{\frac{1}{b}}\left[K_{2}^{*}+{\textstyle \frac{a}{b}}B\left({\textstyle {\textstyle \frac{a}{b}},\frac{b-1}{b};0,}z\right)\right].
\end{equation}
This time we use $\lim_{z\rightarrow0}y\left(z;b<0\right)/z=-\pi\left(a/\left(1-b\right)\right)=-a/\left(1-b\right)$
and 
\begin{equation}
\lim_{z\rightarrow0}\left(1-z\right)^{-{\textstyle \frac{a}{b}}}z^{\frac{1}{b}-1}B\left({\textstyle {\textstyle \frac{a}{b}},\frac{b-1}{b};z,1}\right)=-\frac{b}{1-b}
\end{equation}
to find that $K_{2}^{*}=0$. Substituting as before we get the $\psi$
for $\lambda>0$ and $b>0$: 
\begin{equation}
e^{-\psi_{+}\left(\lambda;b<0\right)}=1+{\textstyle \frac{a}{b}}e^{\frac{a}{b}\lambda}\left(1-e^{-\lambda}\right)^{\frac{1}{b}}B\left({\textstyle {\textstyle \frac{a}{b}},\frac{b-1}{b};0,}1-e^{-\lambda}\right)
\end{equation}
Then, let consider the case $\lambda<0$: this time we take $y'\left(z'\left(\lambda\right)\right)=e^{\psi\left(\lambda\right)}-1$
and $z'\left(\lambda\right)=1-e^{\lambda}$ so that, again, $z'\in\left[0,1\right]$.
We can directly use the previous results for $\lambda>0$ by applying
the transformations $y\left(z\right)=-y'\left(z'\right)/\left[1+y'\left(z'\right)\right]$
and $z=-z'/1-z'$. Substituting in Eq. (\ref{eq:4.12}) and using
Laplace method we find 
\begin{equation}
\partial{}_{z}\left[{\textstyle \frac{y'\left(z'\right)}{1+y'\left(z'\right)}}\left(1-z'\right)^{\frac{1-a}{b}}\left(z'\right)^{-\frac{1}{b}}\right]={\textstyle \frac{a}{b}}\left(1-z'\right)^{\frac{1-a}{b}-1}\left(z'\right)^{-\frac{1}{b}}.
\end{equation}
Again, since $a\in\left[0,1\right]$ for $b>0$ the therm $\left(1-z'\right)^{\frac{1-a}{b}}\left(z'\right)^{-\frac{1}{b}}$
is regular at $z'=1$, then we take 
\begin{equation}
\frac{y'\left(z';b>0\right)}{1+y'\left(z';b>0\right)}=\left(1-z'\right)^{-\frac{1-a}{b}-1}\left(z'\right)^{\frac{1}{b}}\left[K_{3}^{*}-{\textstyle \frac{a}{b}}B\left({\textstyle \frac{1-a}{b},\frac{b-1}{b};z',1}\right)\right]
\end{equation}
and use $\lim_{z'\rightarrow1}y'\left(z';b>0\right)=-\pi\left(0\right)=-a$
and 
\begin{equation}
\lim_{z'\rightarrow1}\left(1-z'\right)^{-\frac{1-a}{b}-1}\left(z'\right)^{\frac{1}{b}}B\left({\textstyle \frac{1-a}{b},\frac{b-1}{b};z',1}\right)=-\frac{b}{1-a}
\end{equation}
to find that, again, $K_{3}^{*}=0$. Substituting $y'\left(z'\left(\lambda\right)\right)=e^{\psi\left(\lambda\right)}-1$
and $z'\left(\lambda\right)=1-e^{\lambda}$, for $\lambda<0$, $b>0$
we find
\begin{equation}
e^{-\psi_{-}\left(\lambda;b>0\right)}=1+{\textstyle \frac{a}{b}}e^{-\frac{1-a+b}{b}\lambda}\left(1-e^{\lambda}\right)^{\frac{1}{b}}B\left({\textstyle \frac{1-a}{b},\frac{b-1}{b};1-e^{\lambda},1}\right)
\end{equation}
Finally, if $b<0$ we can write down our solution as 
\begin{equation}
\frac{y'\left(z';b<0\right)}{1+y'\left(z';b<0\right)}=\left(1-z'\right)^{-\frac{1-a}{b}-1}\left(z'\right)^{\frac{1}{b}}\left[K_{4}^{*}+{\textstyle \frac{a}{b}}B\left({\textstyle \frac{1-a}{b},\frac{b-1}{b};0,z}'\right)\right].
\end{equation}
Then, from $\lim_{z\rightarrow0}y\left(z';b<0\right)/z'=-a/\left(1-b\right)$
and
\begin{equation}
\lim_{z'\rightarrow0}\left(1-z'\right)^{-\frac{1-a}{b}-1}\left(z'\right)^{\frac{1}{b}-1}B\left({\textstyle \frac{1-a}{b},\frac{b-1}{b};z',1}\right)=-\frac{b}{1-b}
\end{equation}
we find that also the last constant is $K_{4}^{*}=0$, and that 
\begin{equation}
e^{-\psi_{-}\left(\lambda;b<0\right)}=1-{\textstyle \frac{a}{b}}e^{-\frac{1-a+b}{b}\lambda}\left(1-e^{\lambda}\right)^{\frac{1}{b}}B\left({\textstyle \frac{1-a}{b},\frac{b-1}{b};0,1-e^{\lambda}}\right).
\end{equation}
This completes the proof. Notice that the boundary conditions we used
to compute $\psi$ are one for each equation, while in general Theorem
\ref{Theorem 4} would require two. The fact that our solutions are
univocally determined by a single boundary condition reflects the
analyticity of these solution in their proximity. It's easy to verify
that the above functions fulfill both initial conditions of Theorem
\ref{Theorem 4} anyway.
\end{proof}
We remark that in the above proof the case $b=0$ is not considered,
since we would get a Bernoulli process whose $\phi$ can be trivially
computed by elementary techniques. Anyway, taking the limit $b\rightarrow0$
in the above expressions will return the desired result.

\section{Acknowledgments.}

I would like to thank Pietro Caputo (Universita degli Studi Roma 3)
for his critical help in preparing this work. I would also like to
thank Giorgio Parisi (Sapienza Universita di Roma) and Riccardo Balzan
(Universite Paris-Descartes) for interesting discussions and suggestions,
and Woldek Bryc (University of Cincinnati) for bringing to my attention
reference \cite{Bryc}. Finally, I am grateful to Bernard Bercu, Michel Bonnefont and Adrien Richou (Universite Bordeaux) for spotting the inverted signs in Corollary 12, and for many other crucial comments.

\end{document}